\newcolumntype{x}[1]{>{\centering\arraybackslash\hspace{0pt}}p{#1}}
\newtheorem{Theorem}{Theorem}[section]
\newtheorem{Definition}{Definition}[section]
\newtheorem{Example}{Example}[section]
\newtheorem{Remark}{Remark}[section]
\newtheorem{Lemma}[Theorem]{Lemma}
\newcommand{\TTT}{{\ensuremath{\mathcal{T}}}}
\newcommand{\LLL}{{\ensuremath{\mathcal{L}}}}
\newcommand{\zed}{{\ensuremath{\mathbb{Z}}}}
\newcommand{\G}{\ensuremath{\mathcal{G}}}
\newcommand{\A}{\ensuremath{\mathcal{A}}}
\newcommand{\Or}{\ensuremath{\mathcal{O}}}
\newcommand{\B}{\ensuremath{\mathcal{B}}}
\renewcommand*{\P}{\ensuremath{\mathcal{P}}}
\title{Weak and Strong Nestings of BIBDs}
\author{preliminary notes---do not distribute}
\author{Douglas R.\ Stinson\thanks{D.R.\ Stinson's research is supported by  NSERC discovery grant RGPIN-03882.}\\
David R.\ Cheriton School of Computer Science\\University of Waterloo\\ Waterloo ON, N2L 3G1, Canada}
\date{\today}
\begin{document}

\maketitle

\begin{abstract}
We study two types of nestings of balanced incomplete block designs (BIBDs). In both types of nesting, we wish to add a point (the \emph{nested point}) to every block of a 
$(v,k,\lambda)$-BIBD in such a way that we end up with a partial $(w,k+1,\lambda+1)$-BIBD for some $w \geq v$.
In the case where $w > v$, we are introducing $w-v$ new points. This is called a \emph{weak nesting}. A \emph{strong nesting} satisfies the stronger property that no pair containing a new point occurs more than once in the partial $(w,k+1,\lambda+1)$-BIBD. In both cases, the goal is to minimize $w$. We prove lower bounds on $w$ as a function of $v$, $k$ and $\lambda$ and we find infinite classes of $(v,2,1)$- and $(v,3,2)$-BIBDs that have \emph{optimal nestings}. 
\end{abstract}

\section{Introduction}

Various kinds of nested designs have been studied for many years. One definition of these objects can be found in 
\cite[VI.36]{CD}, where it is required that a balanced incomplete block design (BIBD) with blocks of size $dk$ can be decomposed into $d$ BIBDs with blocks of size $k$. Here we consider a problem motivated by nesting of Steiner triple systems (STS), e.g., as studied in \cite{St85}. 

A \emph{$(v,k,\lambda)$-balanced incomplete block design} (or \emph{BIBD}) is a pair $(X,\A)$ that satisfies the following properties:
\begin{enumerate}
\item $X$ is a set of $v$ \emph{points}.
\item $\A$ consists of a multiset of \emph{blocks} of size $k$ such that 
 every pair of points  is contained in exactly $\lambda$ blocks.
\end{enumerate}
It can be shown that every point in $X$ occurs in exactly $r = \lambda(v-1)/(k-1)$ blocks. Also, the total number of blocks is $b = vr/k = \lambda v(v-1) / (k(k-1))$.

A $(v,3,1)$-BIBD is called a \emph{Steiner triple system} and it is denoted by STS$(v)$. In an STS$(v)$, we have $r = (v-1)/2$ and $b = v(v-1)/6$.

An STS$(v)$, say $(X, \A)$, can be \emph{nested} if there is a mapping $\phi : \A \rightarrow X$ such that 
$(X, \{ A \cup \{\phi(A)\} : A \in \A )$ is a $(v,4,2)$-BIBD. In words, we are adding a fourth point to every block of the STS$(v)$ so the result is a $(v,4,2)$-BIBD. It was shown in \cite{St85} that, for all $v \equiv 1 \bmod 6$, there exists a nested STS$(v)$. 

More generally, we could start with an arbitrary $(v,k,\lambda)$-BIBD and again add a point to each block, hoping to obtain a \emph{partial} $(v,k+1,\lambda+1)$-BIBD. (In a partial $(v,k,\lambda)$-BIBD, every pair of points occurs in \emph{at most} $\lambda$ blocks.) The mapping $\phi : \A \rightarrow X$ is a \emph{nesting} provided that $(X, \{ A \cup \{\phi(A)\} : A \in \A \} )$ is a \emph{partial} $(v,k+1,\lambda+1)$-BIBD. 
In order to distinguish these kinds of nestings from some generalizations that we will be discussing, we sometimes refer to them as \emph{minimal nestings}. 

If the partial $(v,k+1,\lambda+1)$-BIBD is in fact a $(v,k+1,\lambda+1)$-BIBD, we have a 
\emph{perfect nesting}. There have been various papers that have studied perfect nestings of BIBDs and group-divisible designs (GDDs), including \cite{POC,LR,St85} among others.

We recall a few relevant known results concerning minimal nestings.

\begin{Lemma} 
\textup{\cite{BKS}}
\label{bound.lem} Suppose that a $(v,k,\lambda)$-BIBD has a nesting. Then $k \geq 2 \lambda + 1$.
\end{Lemma}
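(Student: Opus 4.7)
The plan is a double-counting argument on ``new'' pair-block incidences. Let $\phi : \A \to X$ be the nesting; since $|A \cup \{\phi(A)\}| = k+1$ for each $A \in \A$, we may assume $\phi(A) \notin A$. For each point $x \in X$, set $a_x = |\{A \in \A : \phi(A) = x\}|$, so that $\sum_{x \in X} a_x = b$, where $b = \lambda v(v-1)/(k(k-1))$ is the total number of blocks of the BIBD.

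The main step is to fix $x \in X$ and count the new pair-block incidences at $x$: that is, pairs $(A,y)$ with $y \neq x$ such that $\{x,y\}$ appears in the extended block $A \cup \{\phi(A)\}$ but not in $A$ itself. There are exactly two types. First, each block $A$ containing $x$ contributes the single new pair $\{x,\phi(A)\}$, giving $r$ incidences overall. Second, each block $A$ with $\phi(A) = x$ contributes $k$ new pairs $\{x,y\}$ for $y \in A$, giving $k a_x$ incidences overall. The original BIBD already covers each pair $\{x,y\}$ exactly $\lambda$ times, and the nested structure is a partial $(v,k+1,\lambda+1)$-BIBD, so each such pair can be a new incidence at most once. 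Summing over the $v-1$ choices of $y \neq x$ therefore yields
\[
r + k a_x \leq v - 1.
\]

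To conclude, I would sum this inequality over all $x \in X$ and apply $\sum_x a_x = b$ together with the standard BIBD identity $kb = vr$, obtaining $2vr \leq v(v-1)$, equivalently $2r \leq v - 1$. Substituting $r = \lambda(v-1)/(k-1)$ and cancelling a factor of $v-1$ immediately gives $2\lambda \leq k - 1$, i.e., $k \geq 2\lambda + 1$. The one subtle point is confirming that the two types of new incidences at $x$ never coincide for the same block, which is automatic from $\phi(A) \notin A$: a single block cannot simultaneously satisfy $x \in A$ with $\phi(A) = y$ and $\phi(A) = x$ with $y \in A$. I expect this bookkeeping to be the only place where care is required; the rest is just the parameter arithmetic of BIBDs.
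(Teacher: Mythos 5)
Your argument is correct. Note that the paper itself gives no proof of this lemma (it is quoted from \cite{BKS}), so there is nothing internal to compare against; but your double count is exactly the mechanism the paper does use for the more general bounds in Theorems \ref{genweak.thm} and \ref{gen.thm}: the key observation in both places is that a pair already covered $\lambda$ times in $(X,\A)$ can gain at most one further occurrence in the augmented partial $(v,k+1,\lambda+1)$-BIBD. Indeed, your per-point inequality $r + k a_x \leq v-1$, summed over $x$, is just the two-sided count of the global inequality $k b \leq \binom{v}{2}$ that one reads off from the paper's bound $|\TTT| \leq \frac{1}{k}\binom{v}{2}$ when $w = v$ (so that $\TTT = \A$), and either route gives $2r \leq v-1$ and hence $k \geq 2\lambda+1$. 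Two tiny remarks: the condition $\phi(A) \notin A$ is forced by the augmented blocks having size $k+1$, not merely something you ``may assume''; and the full summation over $x$ is not strictly needed, since averaging already produces a point with $a_x \geq b/v = r/k$, which gives $2r \leq v-1$ directly.
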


\begin{Theorem}
\label{equiv.thm}
\textup{\cite{BKS}}
Suppose a $(v,k,\lambda)$-BIBD has a nesting. The following are equivalent:
\begin{enumerate}
\item $k = 2 \lambda + 1$, 
\item $v =  2r+1$, and
\item the nesting is perfect.
\end{enumerate}
\end{Theorem}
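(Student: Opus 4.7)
The plan is to prove the three equivalences by parameter arithmetic, using only the two standard identities $r = \lambda(v-1)/(k-1)$ and $b = \lambda v(v-1)/(k(k-1))$ for the original BIBD and the analogous identities for a $(v,k+1,\lambda+1)$-BIBD.

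For the equivalence of (1) and (2), I would substitute $r = \lambda(v-1)/(k-1)$ into $v = 2r+1$. This gives $(v-1)(k-1) = 2\lambda(v-1)$, and dividing by $v-1$ (which is positive) yields $k-1 = 2\lambda$. Each step is reversible, so (1) $\Leftrightarrow$ (2).

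For (3) $\Rightarrow$ (1), the observation is that a nesting preserves the block count: the nested design still has exactly $b = \lambda v(v-1)/(k(k-1))$ blocks, now of size $k+1$. If the nesting is perfect, this number must also equal the block count $(\lambda+1) v(v-1)/((k+1)k)$ of a $(v,k+1,\lambda+1)$-BIBD. Setting these equal and simplifying gives $\lambda(k+1) = (\lambda+1)(k-1)$, i.e., $k = 2\lambda+1$.

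For (1) $\Rightarrow$ (3), assume $k = 2\lambda+1$. Then a direct calculation shows the block count $b = \lambda v(v-1)/(k(k-1))$ of the original BIBD coincides with the block count $(\lambda+1) v(v-1)/((k+1)k)$ required of a $(v,k+1,\lambda+1)$-BIBD. So the partial $(v,k+1,\lambda+1)$-BIBD produced by the nesting already has the correct number of blocks. To upgrade this to perfection, I would count pair-incidences: the $b$ blocks of size $k+1$ contain $b\binom{k+1}{2}$ pairs (with multiplicity), and a quick substitution shows this equals $(\lambda+1)\binom{v}{2}$. Since in a partial $(v,k+1,\lambda+1)$-BIBD every pair occurs at most $\lambda+1$ times, and the total pair count already saturates $(\lambda+1)\binom{v}{2}$, every pair must occur exactly $\lambda+1$ times, making the nesting perfect. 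There is no real obstacle here; the only step requiring a moment's thought is this saturation argument turning a block-count match into a full BIBD.
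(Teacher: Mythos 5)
Your proposal is correct. Note that the paper does not actually prove Theorem \ref{equiv.thm}; it is quoted from \cite{BKS} without proof, so there is no in-paper argument to compare against. Your argument is self-contained and sound: the equivalence of (1) and (2) is reversible arithmetic with $r=\lambda(v-1)/(k-1)$; the equivalence of (1) and (3) follows from the fact that a nesting preserves the number of blocks, so equating $\lambda v(v-1)/(k(k-1))$ with $(\lambda+1)v(v-1)/((k+1)k)$ forces $k=2\lambda+1$, and conversely the pair-incidence count $b\binom{k+1}{2}=(\lambda+1)\binom{v}{2}$ saturates the partial-design upper bound of $\lambda+1$ on every pair, forcing equality everywhere. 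This is the standard counting route and there is no gap.
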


\begin{Theorem}
\textup{\cite{St85}}
There exists a nested $(v,3,1)$-BIBD if and only if $v \equiv 1  \bmod 6$ and  $v \geq 7$.
\end{Theorem}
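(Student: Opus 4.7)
For necessity, note that a $(v,3,1)$-BIBD has $k = 3 = 2\lambda+1$, so Theorem~\ref{equiv.thm} forces any nesting to be perfect, yielding a $(v,4,2)$-BIBD. The standard divisibility conditions $r = 2(v-1)/3 \in \zed$ and $b = v(v-1)/6 \in \zed$ then require $v \equiv 1 \pmod 3$, and combining this with the STS existence condition $v \equiv 1$ or $3 \pmod 6$ forces $v \equiv 1 \pmod 6$; the bound $v \geq 7$ is immediate since an STS$(v)$ must contain at least one block.

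For sufficiency I would write $v = 6t+1$ and build a cyclic nested STS over $\zed_v$ from $t$ base triples $T_i = \{a_i, b_i, c_i\} \subset \zed_v \setminus \{0\}$: take the STS to be $\{T_i + j : 1 \le i \le t,\ j \in \zed_v\}$ and define the nested point of $T_i + j$ to be $j$ itself. For this to be a nesting, two partition conditions must hold simultaneously in $\zed_v$: the internal differences $\pm(b_i-a_i), \pm(c_i-a_i), \pm(c_i-b_i)$ must partition $\zed_v \setminus \{0\}$ (so that the base triples give a cyclic STS$(v)$), and the elements $\pm a_i, \pm b_i, \pm c_i$ must also partition $\zed_v \setminus \{0\}$ (so that the pairs through the nested origin contribute each nonzero difference exactly once, upgrading the STS to a cyclic $(v,4,2)$-BIBD). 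This is precisely the classical \emph{Heffter difference problem} of the first type, whose positive solution for every $t \geq 1$ is the engine of the construction.

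The main obstacle is the Heffter construction itself: enforcing both the internal-difference and element-coverage partitions at once is delicate, and the usual approach packages a Skolem or hooked Skolem sequence of order $t$ into suitable triples, splitting on $t \pmod 4$ (with hooked Skolem sequences covering the residues where plain Skolem sequences fail to exist). To close the loop I would verify the base case $v = 7$ directly via $T_1 = \{1,2,4\} \subset \zed_7$ with nested point $0$: the internal differences are $\{\pm 1, \pm 2, \pm 3\}$ and the elements are $\{\pm 1, \pm 2, \pm 4\} = \zed_7 \setminus \{0\}$, exactly as required.
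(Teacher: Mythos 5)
This theorem is quoted from \cite{St85}; the paper gives no proof of it, so there is nothing internal to compare against. Your necessity argument is fine (and essentially forced: $k=2\lambda+1$ makes any nesting perfect by Theorem~\ref{equiv.thm}, and then either your divisibility count for the $(v,4,2)$-BIBD or, more directly, Lemma~\ref{2k+1.lem} gives $v\equiv 1\bmod 6$). Your $v=7$ base case also checks out, and it is exactly the paper's Example~\ref{E7}.

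The gap is in sufficiency, in the sentence identifying your two simultaneous partition conditions with ``precisely the classical Heffter difference problem of the first type.'' That identification is wrong. Heffter's first difference problem asks only for base triples whose \emph{internal} differences $\pm(b_i-a_i),\pm(c_i-a_i),\pm(c_i-b_i)$ partition $\zed_v\setminus\{0\}$ --- that is the condition for a cyclic STS$(v)$ and nothing more. Your construction additionally needs the \emph{elements} $\pm a_i,\pm b_i,\pm c_i$ to partition $\zed_v\setminus\{0\}$, and this is an independent constraint: a Heffter/Skolem solution gives you base blocks of the form $\{0,a,a+b\}$, which contain $0$ and whose translates' element multiset has nothing to do with covering each difference once. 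So the ``engine'' you invoke solves only half of what you need. What is actually required is a $(v,4,2)$ difference family whose blocks each contain $0$ and whose punctured blocks form a $(v,3,1)$ difference family; constructing these for all $t$ is the real content of the theorem, and it does not follow from Heffter's problem. (For the record, \cite{St85} does not proceed purely by such a direct cyclic construction either: it combines direct constructions with recursive ones built from nested GDDs --- the same Lemma~3 of \cite{St85} that the present paper reuses in Theorem~\ref{0mod6}.) As written, your proof establishes necessity and the case $v=7$ but not the general existence.
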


\begin{Theorem}
\textup{\cite{BKS}}
There exists a nested $(v,4,1)$-BIBD if and only if $v \equiv 1 \text{ or } 4 \bmod 12$ and  $v \geq 13$.\end{Theorem}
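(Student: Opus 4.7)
My plan is to prove necessity first and then construct nested $(v,4,1)$-BIBDs for all admissible $v$. Necessity is essentially immediate: the existence of a $(v,4,1)$-BIBD in the first place requires $v \equiv 1$ or $4 \pmod{12}$ by Hanani's classical theorem, and among such $v$ the two smallest, $v=1$ and $v=4$, must be excluded because the first has no blocks at all and the second has a single block of size $4$ containing every point, so no point is available to serve as the nested element. Hence $v \geq 13$ is forced.

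For sufficiency I would proceed in two stages. First, I would handle a small collection of base orders, say $v \in \{13, 16, 25, 28, 37, 40, 49\}$, by direct construction. The most promising avenue is cyclic (or $1$-rotational) $(v,4,1)$-BIBDs: one specifies base blocks over $\zed_v$ whose differences cover $\zed_v \setminus \{0\}$ exactly once, and then for each base block $B$ one chooses a nested element $z_B$. The nesting is valid precisely when the collection of new differences $z_B - b$ (with $b$ ranging over $B$, across all base blocks) covers each nonzero difference at most once. A short computer search, or a structured choice exploiting a multiplier of $\zed_v$, should suffice for each base order.

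Second, I would extend to all admissible $v$ by a recursive construction. A natural template uses a transversal design $\mathrm{TD}(5,n)$ or a resolvable $(v',4,1)$-BIBD: inflate each point of the ingredient structure and fill in nested BIBDs on the groups, so that pairs within a group are handled by a smaller nested BIBD, while pairs between groups are handled by the blocks of the inflated transversal design (with nested elements chosen from within the same group as, say, the first coordinate of each block). Combining such recursions with the base cases should cover all of $\{v : v \equiv 1, 4 \pmod{12},\; v \geq 13\}$.

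The hard part will be the recursive step. The plain $(v,4,1)$ property behaves well under Wilson-type composition because pair counts decompose cleanly into within-group and between-group contributions; the partial $(v,5,2)$ property on the nested design is more delicate, since a cross-group pair can pick up additional coverage from nested elements placed in \emph{either} endpoint's group. Arranging the ingredient nestings so that cross-group pair coverage stays within the partial bound of $2$ — for example, by uniformly choosing nested elements inside each point's own group, and then using the fact that the within-group nested design already handles intra-group coverage — is the crux of the argument. I would also expect that a small handful of exceptional orders near the lower bound $v=13$ will need ad hoc direct constructions rather than falling out of the recursion.
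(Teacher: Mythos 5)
You should first note that the paper does not prove this statement at all: it is recalled verbatim from \cite{BKS}, so there is no internal proof to compare against. Judged on its own terms, your proposal has the right overall architecture --- necessity from Hanani's existence theorem for $(v,4,1)$-BIBDs plus the degeneracy of $v=1$ and $v=4$, and sufficiency from a stock of directly constructed base cases combined with a Wilson-style recursion --- and this is indeed the general shape of the argument in \cite{BKS}. The necessity half is essentially complete and correct.

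The sufficiency half, however, is an outline rather than a proof, and the gap sits exactly where you locate it. No base blocks are exhibited for any of the orders $13, 16, 25, 28, \dots$ (``a short computer search should suffice'' is not a construction), and the recursive step is left unresolved. Worse, the specific mechanism you propose --- choosing the nested element of each cross block from the same group as one of its points --- is the wrong choice: it creates new \emph{intra-group} pairs $\{x,\phi(A)\}$, and those pairs are already being consumed (possibly twice) by the nested sub-design filled into that group, so the partial $(v,5,2)$ bound can be violated. The standard repair, and the one used in the literature on nestings (compare how this paper uses nested $3$-GDDs and frames in Sections \ref{weak>2} and \ref{k>2.sec}), is to take as ingredients \emph{nested $(4,1)$-GDDs}, i.e., GDDs whose nesting already satisfies the partial $(5,2)$-GDD property with every augmented block still transverse to the groups; then all pairs introduced by the nesting of cross blocks are cross-group pairs, the intra-group pairs are handled exclusively by the filled-in nested BIBDs, and the two contributions cannot collide. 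Without producing those ingredient nested GDDs (and the base BIBDs), the recursion does not close, so the proposal as written does not constitute a proof.
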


We remark that the nestings of $(v,4,1)$-BIBDs are not perfect (this follows from Theorem \ref{equiv.thm}).

\begin{Lemma} 
\label{2k+1.lem}
\textup{\cite{BKS}}
If a $(v,k,\lambda)$-BIBD has a perfect nesting, then $v \equiv 1 \bmod 2k$.
\end{Lemma}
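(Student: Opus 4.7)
The plan is to exploit Theorem \ref{equiv.thm}, which tells us that a perfect nesting forces $k = 2\lambda+1$ (and hence $\lambda = (k-1)/2$) and $v = 2r+1$, so the original BIBD has replication number $r = (v-1)/2$. The idea is then to compare replication numbers in the original BIBD and in the $(v,k+1,\lambda+1)$-BIBD obtained by the perfect nesting, and read off a divisibility condition from the count of how often each point plays the role of a nested point.

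First I would write down the replication number $r'$ of the perfect $(v,k+1,\lambda+1)$-BIBD, namely
\[
r' = \frac{(\lambda+1)(v-1)}{k} = \frac{(k+1)(v-1)}{2k},
\]
using $\lambda+1 = (k+1)/2$. Next, for each point $x \in X$, I would split the blocks of the nested design that contain $x$ into two types: those in which $x$ lies in the original $k$-element block $A$, and those in which $x = \phi(A)$ is the nested point. The first type contributes exactly $r = (v-1)/2$ blocks. Letting $n_x$ denote the number of blocks $A \in \A$ with $\phi(A) = x$, we get $r' = r + n_x$, so
\[
n_x = r' - r = \frac{(k+1)(v-1)}{2k} - \frac{v-1}{2} = \frac{v-1}{2k}.
\]
Since $n_x$ is a non-negative integer for every $x$, this forces $2k \mid (v-1)$, i.e.\ $v \equiv 1 \pmod{2k}$.

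There is not really a deep obstacle here: the argument is essentially a double count of incidences, and the whole proof fits once one sees that the ``nested-point multiplicity'' $n_x$ is the difference of two prescribed replication numbers. The only step requiring care is invoking Theorem \ref{equiv.thm} at the outset to pin down $k = 2\lambda+1$ and $r = (v-1)/2$; without these, $r'-r$ does not simplify to $(v-1)/(2k)$.
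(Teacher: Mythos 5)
Your proof is correct. The paper states this lemma only as a quoted result from \cite{BKS} and supplies no proof of its own, so there is no internal argument to compare against; your double count is sound: after using Theorem \ref{equiv.thm} to pin down $k=2\lambda+1$ and $r=(v-1)/2$, the decomposition $r'=r+n_x$ is legitimate because $\phi(A)\notin A$ makes the two ways a point can lie in an augmented block disjoint, and the resulting $n_x=(v-1)/(2k)$ being a non-negative integer gives exactly $v\equiv 1 \bmod 2k$.
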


We recall some  terminology from \cite{BKS}.   The point $\phi(A)$ is called the \emph{nested point} for the block $A$. We refer to $A \cup \{ \phi(A) \}$ as an \emph{augmented block}. 
The (possibly partial) $(v,k+1,\lambda+1)$-BIBD obtained from a nesting is called the \emph{augmented design}. In the case where the nesting is perfect, we may also call it the \emph{augmented BIBD}.

If $k \leq 2 \lambda$, then a  $(v,k,\lambda)$-BIBD does not have a (minimal) nesting, from Lemma \ref{bound.lem}.
This suggests that we consider more general kinds of nestings. For example,  
we might allow ``new'' points (i.e., points that are not in the $(v,k,\lambda)$-BIBD) to be introduced.

\begin{Definition}
{\rm Suppose $(X, \A)$ is a $(v,k,\lambda)$-BIBD and let $\phi : \A \rightarrow Y$, where
$X \subseteq Y$. Denote $|Y| = w \geq v$. 

A mapping $\phi : \A \rightarrow Y$ is a \emph{weak nesting} provided that 
$(Y, \{ A \cup \{\phi(A)\} : A \in \A )$ is a \emph{partial} $(w,k+1,\lambda+1)$-BIBD.  

A mapping $\phi : \A \rightarrow Y$ is a \emph{strong nesting} if 
\begin{enumerate}
\item for all $A \in \A$, it holds that $\phi(A) \cap A = \emptyset$, and
\item the multiset
\[ \{ \{ x, \phi(A)\}: x \in A \in \A \} \] consists of \emph{distinct pairs} of points.
\end{enumerate}
}
\end{Definition}

In a weak nesting, any pair of points can occur up to $\lambda +1$ times. 
In a strong nesting, a pair that includes a new point (i.e., a point in $Y \setminus X$) can only occur once, whereas, 
pairs of old points (i.e., points in $X$) can occur up to $\lambda +1$ times in the augmented design.

It is easy to see that a strong nesting is also a weak nesting. Also, any minimal nesting is automatically strong. 



If a minimal nesting of a $(v,k,\lambda)$-BIBD does not exist, 
a natural problem is to construct a strong or weak nesting of a $(v,k,\lambda)$-BIBD in which $w$ is as small as possible; such a nesting is an \emph{optimal} nesting.

We present two small examples to illustrate the difference between weak and strong nestings.

\begin{Example}
\label{E4}
{\rm We construct an optimal weakly nested $(4,3,2)$-BIBD.
The augmented design has blocks $(1,2,3,\infty_1)$, $(1,2,4,\infty_1)$, $(1,3,4,\infty_1)$, $(2,3,4,\infty_1)$.

The augmented design is a partial $(5,4,3)$-BIBD. The new point is $\infty_1$. 
It is easy to verify that there are no minimal nestings of the $(4,3,2)$-BIBD, so the 
nesting is optimal (the optimality of this nesting is also a consequence of Lemma \ref{bound.lem}). 
}
\end{Example}

\begin{Example}
\label{E4strong}
{\rm We construct an optimal strongly nested $(4,3,2)$-BIBD with $w=7$.
The augmented design has blocks $(1,2,3,4)$, $(1,2,4,\infty_1)$, $(1,3,4,\infty_2)$, $(2,3,4,\infty_3)$.
The augmented design has $w = 7$ points; the  new points are $\infty_1, \infty_2 , \infty_3$. 

To verify that the nesting is strong, we examine the new pairs occurring with the nested points:
$\{1,4\}$, $\{2,4\}$, $\{3,4\}$, $\{\infty_1 ,1\}$, $\{\infty_1 ,2\}$, $\{\infty_1 ,4\}$, $\{\infty_2 ,1\}$, 
$\{\infty_2 ,3\}$, $\{\infty_2 ,4\}$, $\{\infty_3 ,2\}$, $\{\infty_3 ,3\}$, $\{\infty_3 ,4\}$. These pairs are all distinct, so we have a strong nesting.

There are various ways to show that there is no strong nesting of this $(4,3,2)$-BIBD with $w\leq 6$, so the nesting with $w = 7$ is optimal. For example, the lower bound $w \geq 7$ follows from Lemma \ref{4mod6bound}, to be proven later in this paper. 
}
\end{Example}

For future reference, we summarize the differences between  various types of nestings in Table \ref{nest.tab}.

\begin{table}
\caption{Summary of definitions of various types of nestings}
\label{nest.tab}
\begin{center}
\begin{tabular}{c|p{3.5in}}
type of nesting & \multicolumn{1}{c}{definition} \\ \hline
weak & augmented design is a partial BIBD with index $\lambda + 1$ \\ \hline
strong & augmented design is a partial BIBD with index $\lambda + 1$, and all the pairs in the augmented design that contain a nested point are distinct  \\ \hline
optimal & $w$ is as small as possible (applies to weak and strong nestings) \\ \hline
minimal & $w = v$ (this is necessarily a strong nesting) \\ \hline
perfect & a minimal nesting where the nested design is a BIBD with index $\lambda +1$, or equivalently, 
the equality  $k = 2 \lambda + 1$  is satisfied
\end{tabular}
\end{center}
\end{table}

\subsection{Design theory definitions}

Here we give a few definitions concerning the types of designs that we use in the rest of the paper.

A \emph{$(k,\lambda)$-group-divisible design} (or \emph{GDD}) 
is a triple $(X,\G,\A)$ that satisfies the following properties:
\begin{enumerate}
\item $X$ is a set of \emph{points}.
\item $\G$ is a partition of $X$ into subsets called 
\emph{groups}, 
say $\G = \{G_1, \dots , G_u\}$.
\item $\A$ consists of a multiset of \emph{blocks} of size $k$ such that 
\begin{enumerate}
\item $|G_i \cap A| \leq 1$ for $1 \leq i \leq u$ and for all $A \in \A$,
\item every pair of points from different groups is contained in exactly $\lambda$ blocks.
\end{enumerate}
\end{enumerate}
If $\lambda = 1$, we often refer to the GDD as a $k$-GDD.

The \emph{type} of a GDD is the multiset $\{|G| : G \in \G\}$. If there are $u_i$ groups of size $t_i$, for 
$1 = i, \dots , s$, then we write the type as ${t_1}^{u_1} {t_2}^{u_2} \cdots {t_s}^{u_s}$. If all the groups have the same size, then the type is of the form $t^u$.

A $(k,\lambda)$-GDD of type $t^u$ can be \emph{nested} if there is a mapping 
$\phi : \A \rightarrow X$ such that
$(X,\G,\B)$  is a partial $(k+1,\lambda + 1)$-GDD of type $t^u$, where
\[ \B = \{ A \cup \{\phi(A)\} : A \in \A \} .\]
If the augmented partial GDD is actually a GDD (i.e., a $(k+1,\lambda + 1)$-GDD of type $t^u$), then we say that the nesting of the GDD is \emph{perfect}.

It has been shown in \cite{BKS} that a nesting of a $(k,\lambda)$-GDD is perfect if and only if $k = 2\lambda +1$. 
In this paper, we only make use of nestings of $(3,1)$-GDDs (i.e., $3$-GDDs). Since $3 = 2 \times 1 + 1$, these are perfect nestings.

A $(v,k,\lambda)$-BIBD, say $(X,\A)$, is \emph{resolvable} if the set of blocks $\A$ can be partitioned into 
$r = \lambda(v-1)/(k-1)$ subsets of blocks, say $\P_1, \dots , \P_{r}$, such that each $\P_i$ partitions the set of points $X$. The $\P_i$'s are called \emph{parallel classes}. Of  course $v \equiv 0 \bmod k$ is a necessary condition for a resolvable $(v,k,\lambda)$-BIBD to exist. A resolvable STS$(v)$ is a \emph{Kirkman triple system}; it is denoted by KTS$(v)$.
KTS$(v)$ are known to exist for all $v \equiv 3 \bmod 6$ (see \cite{CR}).

A $(k,\lambda)$-GDD of type $t^u$, say $(X,\G,\A)$,  is  \emph{resolvable} if the set of blocks $\A$ can be partitioned into $r = \lambda t(u-1)/(k-1)$ subsets of blocks, say $\P_1, \dots , \P_{r}$, such that each $\P_i$ partitions the set of points $X$. The $\P_i$'s are called \emph{parallel classes}. 
A resolvable $(k,\lambda)$-GDD of type $t^u$ can exist only if $tu \equiv 0 \bmod k$.

A \emph{3-frame} is a $3$-GDD, say $(X,\G,\A)$ such that $\A$ can be partitioned into 
\emph{holey parallel classes}, where each holey parallel class partitions $X \setminus G$ for some $G \in \G$.
If $\P \subseteq \A$ is a partition of $X \setminus G$, then we say that $G$ is the \emph{hole} associated with 
$\P$. It is known that, for every group (i.e., hole) $G \in \G$, there must be exactly $|G|/2$ holey parallel classes associated with $G$ (see \cite{St87}). The total number of holey parallel classes is $|X|/2$.

Finally, a $(v,k,\lambda)$-BIBD, say $(X,\A)$, is \emph{cyclic} if $X = \zed_v$ and $\A$ is fixed under the action of $\zed_v$. If we let $\zed_v$ act on any particular block in $\A$, we obtain an \emph{orbit} of blocks. It is well-known that a cyclic STS$(v)$ exists if and only if $v \equiv 1 \text{ or } 3 \bmod 6$, $v \neq 9$  (see \cite{CR}).
In the case of a cyclic STS$(v)$ with $v \equiv 1 \bmod 6$, there will be $(v-1)/6$ orbits, each consisting of $v$ blocks.
In the case of a cyclic STS$(v)$ with $v \equiv 3 \bmod 6$, there will be one \emph{short orbit} of $v/3$ blocks (consisting of 
the subgroup $\{0, v/3, 2v/3\}$ and its additive cosets) and the remaining orbits each consist of $v$ blocks.   

\subsection{Prior and related work}

Strong nestings have been considered implicitly in \cite{AGMMRRT,PBOM,BMNR} in the setting of harmonious colourings of Levi graphs of BIBDs. The paper by Buratti, Merola, Nakic and Rubio-Montiel \cite{BMNR} is the first one to note the connections between these two problems. We discuss these connections further in Section \ref{levi.sec}. 

In this paper, we are only considering the problem of nesting a $(v,k,\lambda)$-BIBD into a partial 
$(w,k+1,\lambda+1)$-BIBD. If we want to have $w = v$ (i.e., a minimal nesting), then 
$k \geq 2 \lambda + 1$ is a necessary condition. In other words, if $k < 2 \lambda+1$, then we must have $w > v$ 
(these are the cases we address in this  paper).

Marco Buratti pointed out that the standard embedding of an affine plane into a projective plane could be regarded as a type of nesting where the value of $\lambda$ does not change. 
Specifically, if we delete one line from a projective plane of order $n$, 
then we have a nesting of an $(n^2,n,1)-$BIBD into a partial $(n^2+n+1,n+1,1)$-BIBD. Both of these designs have $n^2 + n$ blocks.

Another approach is to require $w = v$, but permit $\lambda$ to increase by more than one. In other words, 
we could nest a $(v,k,\lambda)$-BIBD into a partial $(w,k+1,\lambda')$-BIBD, where $\lambda'$ is minimized.
For example, it is often possible to nest a $(v,3,\lambda)$-BIBD into a $(v,4,2\lambda)$-BIBD 
(for these parameters, the augmented design will be a BIBD). Many results on this problem can be found in \cite{CC83}. See also \cite{CR} for other results involving various types of nesting problems for triple systems.

\subsection{Our contributions}

We  introduce the notion of a weak nesting in this paper. In Section \ref{weak.sec}, we consider the simplest case, namely, weakly nesting $(v,2,1)$-BIBDs. We prove a lower bound on the number of points $w$ required in such a nesting and we provide constructions to show  that this bound can always be met with equality.

In Section \ref{weak>2}, we study weak nestings of $(v,k,\lambda)$-BIBDs for arbitrary $k \geq 2$. We first prove a lower bound on $w$. Then we study the case of $(v,3,2)$-BIBDs in detail.  It is well-known that a $(v,3,2)$-BIBD exists if and only if $v \equiv 0 \text{ or } 1 \bmod 3$.  For almost all values of $v \equiv 0,1 \text{ or } 3 \bmod 6$, we find $(v,3,2)$-BIBDs that have optimal weak nestings.  For $v \equiv 4  \bmod 6$, find $(v,3,2)$-BIBDs that have close to optimal weak nestings (with a small number of exceptions, $w$ is one greater than the lower bound).

In Section \ref{strong.sec}, we turn our attention to strong nestings.
The construction of optimal strong nestings of $(v,2,1)$-BIBDs for all $v$ was done by  Abreu {\it et al.} in \cite{AGMMRRT}. Here, we show how the lower bound on $w$ for strongly nested $(v,32,1)$-BIBDs from \cite{PBOM} can easily be proven by adapting our lower bound for weak nestings. We also show that some examples of optimal nestings can be obtained by modifying the weak nestings we constructed in Section \ref{weak.sec}.

In Section \ref{k>2.sec}, we investigate strongly nested $(v,k,\lambda)$-BIBDs for arbitrary $k \geq 2$. We give a simple alternate derivation of a lower bound on $w$ that was first proven in \cite{BMNR}. Then we focus on strong nestings of $(v,3,2)$-BIBDs.  For almost all values of $v \equiv  1, 3 \text{ or } 4 \bmod 6$, and for most values of $v \equiv 0 \bmod 12$, we find $(v,3,2)$-BIBDs that have optimal strong nestings. For $v \equiv  6  \bmod 12$, we  find $(v,3,2)$-BIBDs that have close to optimal strong nestings  (with a small number of exceptions, $w$ is two greater than the lower bound).

Section \ref{levi.sec} explores connections between strong nestings of BIBDs and harmonious colourings of Levi graphs of BIBDs, expanding on some results from \cite{BMNR}.  Finally, Section \ref{summary.sec} is a brief summary and conclusion.

\section{Optimal weak nestings of $(v,2,1)$-BIBDs}
\label{weak.sec}

The smallest case to consider is when $(X, \A)$ is a $(v,2,1)$-BIBD. A minimal nesting cannot exist from 
Lemma \ref{bound.lem}. We begin by considering weak nestings of $(v,2,1)$-BIBDs.
 In a $(v,2,1)$-BIBD, the set of blocks $\A$ just consists of all pairs of points from $X$. 
Suppose $\phi : \A \rightarrow Y$ defines a weak nesting, where $X \subseteq Y$ and $|Y| = w$. 
For convenience, for every $A \in \A$, denote $A' = A \cup \{\phi(A)\}$. Also, let $\A' = \{A' : A \in \A\}$.

We first establish a lower bound on $w$ as a function of $v$. 

\begin{Lemma}
\label{L1} Suppose a $(v,2,1)$-BIBD can be weakly nested into a partial $(w,3,2)$-BIBD. Then 
$w \geq  (5v-1)/4$.
\end{Lemma}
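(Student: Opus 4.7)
The plan is a simple double-counting argument on pair-incidences in the augmented design, exploiting the fact that new points can only be paired with old points, never with each other.

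First I would set up the count: the $(v,2,1)$-BIBD has $|\A| = \binom{v}{2} = v(v-1)/2$ blocks, so the augmented design $\A'$ consists of $v(v-1)/2$ triples. The total number of pair-incidences in $\A'$ (summing, over each block, the three pairs it contains) is therefore exactly $3v(v-1)/2$.

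Next I would classify the pairs of $Y$ according to whether their endpoints are ``old'' (in $X$) or ``new'' (in $Y\setminus X$). The key observation is that every block of $\A$ is a $2$-subset of $X$, so every augmented block $A \cup \{\phi(A)\}$ contains at least two points of $X$. Hence no pair contained in $Y \setminus X$ can ever appear in an augmented block. This leaves only two types of pairs contributing to the count: the $\binom{v}{2}$ pairs inside $X$, and the $v(w-v)$ mixed pairs with one endpoint in $X$ and one in $Y\setminus X$. Because the augmented design is a partial $(w,3,2)$-BIBD, each pair appears at most $\lambda + 1 = 2$ times, so
\[
\frac{3v(v-1)}{2} \;\leq\; 2\binom{v}{2} + 2v(w-v) \;=\; v(v-1) + 2v(w-v).
\]

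Finally I would solve this inequality for $w$: subtracting $v(v-1)$ from both sides gives $v(v-1)/2 \leq 2v(w-v)$, hence $w - v \geq (v-1)/4$, which rearranges to $w \geq (5v-1)/4$, as claimed.

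There isn't really a hard step here — once one notices that pairs lying entirely in $Y\setminus X$ are ``wasted'' (they can never be covered by any augmented block), the rest is just arithmetic. The only conceptual point to flag is the reason this bound beats the naive estimate $w(w-1) \geq 3v(v-1)/2$: by excluding the $\binom{w-v}{2}$ new–new pairs from the capacity, we force $w$ to grow linearly (rather than only like $\sqrt{3/2}\,v$) in $v$.
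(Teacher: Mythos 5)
Your proof is correct and is essentially the same double-counting argument as the paper's: both hinge on the observations that no augmented block can contain two new points and that every pair occurs at most $\lambda+1=2$ times. The only difference is bookkeeping — you count pair-incidences ($3b \leq v(v-1) + 2v(w-v)$) while the paper counts blocks, splitting them by whether the nested point is old or new — and the two inequalities rearrange to the identical bound $w-v \geq (v-1)/4$.
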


\begin{proof}
Let $(Y, \A')$ be the partial $(w,3,2)$-BIBD (i.e., the augmented design) obtained from weakly nesting the $(v,2,1)$-BIBD $(X, \A)$ using the mapping $\phi$.
First, observe that no block in $\A'$ contains two points from $Y \setminus X$. Hence, every point in $Y \setminus X$ occurs in at most $v$ blocks (since each pair of the form $\{x,y\}$, where $x \in X$ and $y \in Y \setminus X$,  can occur in at most two blocks in $\A'$).

We now obtain an upper bound on  $|\TTT|$, where $\TTT =  \{A \in \A : \phi(A) \in X \} $. Whenever $\phi(A) \in X$, $A'$ contains two new pairs of points from $X$. So \[|\TTT| \leq \frac{1}{2}\binom{v}{2}= \frac{v(v-1)}{4}.\]

The total number of blocks is at most 
\[ (w-v) v + |\TTT| \leq (w-v) v + \frac{v(v-1)}{4}.\] 
However, the total number of blocks is exactly $v(v-1)/2$, so we have
\begin{equation}
\label{eq1} (w-v) v + \frac{v(v-1)}{4} \geq \frac{v(v-1)}{2},\end{equation}
or
\[ (w-v)v \geq \frac{v(v-1)}{4},\]
which simplifies to give 
\[w \geq  \frac{5v-1}{4}.\]
\end{proof}

Since $w$ must be an integer, the minimum values of $w$ are as follows:
\begin{itemize}
\item if $v \equiv 1 \bmod 4$, say $v = 4t+1$, then $w \geq (5v-1)/4 = 5t+1$
\item if $v \equiv 3 \bmod 4$, say $v = 4t+3$, then $w \geq \lceil (5v-1)/4 \rceil = 5t+4$
\item if $v \equiv 0 \bmod 4$, say $v = 4t$, then $w \geq \lceil (5v-1)/4 \rceil = 5t$
\item if $v \equiv 2 \bmod 4$, say $v = 4t+2$, then $w \geq \lceil (5v-1)/4 \rceil = 5t+3$.
\end{itemize}

The case of equality in Lemma \ref{L1}, i.e., the case $v \equiv 1 \bmod 4$, is particularly interesting.
In this situation, $w = (5v-1)/4$ and the augmented design is actually 
a $(w,3,2)$-BIBD with a hole of size $w-v$
(this just means that no block contains two points from $Y \setminus X$). 

Here are a few small examples of optimal weak nestings. Here and elsewhere, for brevity, we write the blocks of $\A'$ (the augmented design) in the form $(x,y,\phi(x,y))$.

\begin{Example}
\label{E1}
For $v = 5$, $w = 6$, develop the following two blocks modulo $5$: $(0,1 , \infty)$  and $(0,2,3)$.
Note that $\infty$ is the new point.
\end{Example}

\begin{Example}
\label{E2}
For $v = 9$, $w = 11$, develop the following four blocks modulo $9$: $(0,1 , \infty_1)$ $(0,2, \infty_2)$,
$(0,3,4)$ and $(0,4,6 )$. Note that $\infty_1, \infty_2$ are  new points.

\end{Example}

\begin{Example}
\label{E3}
For $v = 13$, $w = 16$, develop the following six blocks modulo $13$: $(0,1, \infty_1)$, $(0,5, \infty_2)$,   
$(0,6, \infty_3)$,  $(0,2,6)$,   $(0,3,5)$ and   $(0,4,1)$. Note that $\infty_1, \infty_2, \infty_3$ are  new points.
\end{Example}

We prove a general existence result for $v \equiv 1 \bmod 4$.

\begin{Lemma}
\label{T1mod4}
For any $v = 4t+1$, $v \geq 5$, there is an optimal weak nesting of the $(v,2,1)$-BIBD with $w = 5t+1$.
\end{Lemma}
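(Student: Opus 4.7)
The plan is to give an explicit cyclic construction for the augmented design on the point set $\zed_v \cup \{\infty_1, \ldots, \infty_t\}$, using $2t$ base triples developed modulo $v$ (with the $\infty_i$'s fixed): the \emph{infinity} base triples $(0, t+i, \infty_i)$ for $i = 1, \ldots, t$, and the \emph{all-old} base triples $(0, j, b_j)$ for $j = 1, \ldots, t$, where the $b_j$'s remain to be chosen. The nesting is read off in the obvious way: the pair $\{0, t+i\}$ is nested to $\infty_i$, the pair $\{0, j\}$ is nested to $b_j$, and all translates nest accordingly.

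The tight counting in Lemma \ref{L1} (applied with $w = 5t+1$) forces the cyclic requirement that each difference $d \in \{1, \ldots, 2t\}$ must occur with total multiplicity exactly $2$ across the $2t$ base triples, and each pair $\{\infty_i, x\}$ must occur in exactly $2$ triples. The infinity triples already account for each $\{\infty_i, x\}$ twice and each difference in $D := \{t+1, \ldots, 2t\}$ once, while each primary pair $\{0, j\}$ of an all-old triple covers $j \in \{1, \ldots, t\}$ once. So the problem reduces to choosing $b_j$'s so that the multiset of \emph{secondary} differences $\{|b_j|_v,\, |b_j - j|_v : j = 1, \ldots, t\}$ equals $\{1, 2, \ldots, 2t\}$, where $|x|_v := \min(x \bmod v, -x \bmod v)$.

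This is exactly a Skolem-type partition problem. For $t \equiv 0$ or $1 \pmod 4$, I would invoke a classical Skolem sequence of order $t$: a partition of $\{1, \ldots, 2t\}$ into pairs $\{y_j, y_j + j\}$ of differences $j = 1, \ldots, t$. Setting $b_j := y_j + j$ yields $|b_j - j|_v = y_j$ and $|b_j|_v = y_j + j$ (no wrap-around, since both lie in $\{1, \ldots, 2t\}$), and these exhaust $\{1, \ldots, 2t\}$. For $t \equiv 2$ or $3 \pmod 4$, I would instead use a hooked Skolem sequence of order $t$, which partitions $\{1, \ldots, 2t-1, 2t+1\}$ into pairs of the same form. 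Here the element $2t+1$, which necessarily appears as the larger endpoint of one pair, reduces via $|\cdot|_v$ to $|2t+1|_v = 2t$ in $\zed_{4t+1}$, precisely filling the missing slot $2t$ in the secondary multiset. Skolem and O'Keefe's classical existence theorems together cover every $t \geq 1$.

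The remaining verification is bookkeeping: for each pair $\{x, y\} \subseteq \zed_v$ with $|x-y|_v = d$, the developed design contributes one triple with $\{x, y\}$ as a primary pair and one with $\{x, y\}$ as a secondary pair (possibly within the same base triple, which is fine), for a total of two occurrences; and the orbit of $(0, t+i, \infty_i)$ places each pair $\{\infty_i, x\}$ in exactly two triples and no other base triple touches $\infty_i$. This produces the desired partial $(5t+1, 3, 2)$-BIBD, matching the lower bound $w \geq 5t+1$ of Lemma \ref{L1}. The only non-trivial input is the existence of (hooked) Skolem sequences of every order; I do not foresee any deeper obstacle.
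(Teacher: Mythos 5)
Your construction is correct and follows essentially the same route as the paper: both develop $2t$ base triples modulo $4t+1$, with $t$ of them carrying the new points $\infty_i$ (so each pair $\{x,\infty_i\}$ arises exactly twice) and $t$ all-old triples whose secondary differences must cover $\{1,\dots,2t\}$ exactly once each. The only difference is that the paper writes down explicit third points, namely the base blocks $(0,2j+1,2t-2j)$ for $j=0,\dots,t-1$, and verifies the differences directly, whereas you outsource that step to the classical existence of Skolem and hooked Skolem sequences --- a valid substitution, including your observation that the hook element $2t+1$ reduces to the missing difference $2t$ modulo $4t+1$.
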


\begin{proof}
Develop the following $2t$ blocks modulo $4t+1$ to obtain the augmented design:
\begin{center}
\begin{tabular}{ll}
$(0,2j, \infty_j)$, & for $j = 1, \dots , t$; and \\
 $(0,2j+1,2t-2j)$, & for $j = 0, \dots , t-1$,
 \end{tabular}
 \end{center}
where the $\infty_j$'s are the new points.
 
 The verifications are straightforward. 
 First, it is clear that the first two points in each block, when developed modulo $4t+1$, form a $K_{4t+1}$.
 Next, consider the differences of the form $\pm(2t-2j)$ and $\pm(2t-4j-1)$ (reduced modulo  $4t+1$) for 
 $j = 0, \dots , t-1$. The first set of differences is $\{ \pm 2, \pm 4, \dots , \pm 2t \}$ and  the second set of differences is $\{ \pm 1, \pm 3, \dots , \pm (2t-1)\}$. The development of all $2t$ blocks contains every pair in the $K_v$ once, and every pair of the form $\{i, \infty _j\}$ twice (and there are no pairs of the form $\{\infty_i, \infty _j\}$).
\end{proof}

\begin{Remark}
{\rm
A design is said to be \emph{t-pyramidal} when it has 
an automorphism group with $t$ fixed points, while acting 
sharply transitively on the others. 
Marco Buratti observed that the augmented design in Lemma \ref{T1mod4}
gives rise to a $t$-pyramidal $(5t+1,3,2)$-BIBD whenever
$t\equiv 0 \text{ or } 1 \bmod 3$: it suffices to add the blocks 
of a $(t,3,2)$-BIBD with point set $\{\infty_1,\dots,\infty_t\}$.
For instance, adding the block $\{\infty_1,\infty_2,\infty_3\}$  
twice to the augmented design of Example 2.3, we 
get a $3$-pyramidal $(16,3,2)$-BIBD. 
Very little is 
known on $t$-pyramidal BIBDs with $t>1$ apart from 
two papers \cite{B2,B1} on $3$-pyramidal Steiner triple systems and Kirkman triple systems.
}
\end{Remark}

A slight modification of Lemma \ref{T1mod4} handles all the cases $v \equiv 3 \bmod 4$.

\begin{Lemma}
\label{T3mod4}
For any $v = 4t+3$, $v \geq 7$, there is an optimal weak nesting of the $(v,2,1)$-BIBD with $w = 5t+4$.
\end{Lemma}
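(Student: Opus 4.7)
My plan is to adapt the construction from Lemma~\ref{T1mod4} almost verbatim, adding one extra infinity orbit to absorb the extra difference class $\pm(2t+1)$ that arises when working modulo $4t+3$ instead of $4t+1$. Specifically, I would develop the following $2t+1$ base blocks modulo $4t+3$:
\begin{center}
\begin{tabular}{ll}
$(0,2j,\infty_j)$, & for $j = 1, \dots, t+1$; and \\
$(0,2j+1,2t-2j)$, & for $j = 0, \dots, t-1$,
\end{tabular}
\end{center}
where $\infty_1, \dots, \infty_{t+1}$ are the new points. This yields $(2t+1)(4t+3) = \binom{v}{2}$ augmented blocks on $w = (4t+3)+(t+1) = 5t+4$ points, matching the lower bound from Lemma~\ref{L1}.

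The verification would then proceed along the same three lines as in Lemma~\ref{T1mod4}. First, the ``first two points'' of each base block contribute the differences $\pm 2, \pm 4, \dots, \pm 2t, \pm 2(t+1), \pm 1, \pm 3, \dots, \pm(2t-1)$; since $\pm 2(t+1) \equiv \pm(2t+1) \pmod{4t+3}$, these exhaust the $2t+1$ nonzero difference classes of $\zed_{4t+3}$, so every old pair appears as the original pair of exactly one augmented block. Second, each pair $\{x, \infty_j\}$ occurs exactly twice (from the two translates of $(0, 2j, \infty_j)$ that contain $x$), and no pair $\{\infty_j, \infty_{j'}\}$ ever occurs. Third, the two ``extra'' differences in an old-only block are $\pm(2t-2j)$ and $\pm(2t-4j-1)$: as $j$ ranges over $\{0, \dots, t-1\}$, the first set covers the even classes $\pm 2, \dots, \pm 2t$ exactly once (duplicating the corresponding infinity orbits), and the second set covers the odd classes $\pm 1, \pm 3, \dots, \pm(2t-1)$ exactly once (duplicating the old-only orbits). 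Hence every pair of difference at most $2t$ appears twice in the augmented design, every pair of difference $2t+1$ appears exactly once, and the augmented design is a partial $(w,3,2)$-BIBD.

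The only real subtlety is to confirm that the third-pair differences $\pm(2t-4j-1) \pmod{4t+3}$, for $j \in \{0, \dots, t-1\}$, never hit the class $\pm(2t+1)$---otherwise some pair would occur three times, violating the partial BIBD condition. This is immediate: the $t$ values $|2t-4j-1|$ are all odd and lie in $[1, 2t-1]$ (strictly below $2t+1$), and distinctness up to sign follows because $(2t-4j-1)+(2t-4j'-1) = 0$ would force $j + j' = t - \tfrac{1}{2}$, which is non-integral. Everything else is a direct translation of the modulus-$(4t+1)$ argument to modulus $4t+3$.
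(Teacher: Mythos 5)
Your construction is correct and follows essentially the same route as the paper: develop $2t+1$ base blocks modulo $4t+3$, attach $t+1$ new points to $t+1$ of the orbits, and nest the remaining $t$ orbits with old points so that exactly one difference class (here $\pm(2t+1)$) is covered only once in the augmented design, giving $w=5t+4$ as required by Lemma~\ref{L1}. The paper's base blocks differ only in detail---it assigns the extra infinity point to the difference class $\pm 1$ and shifts the old-only blocks to $(0,2j+1,2t+2-2j)$ for $j=1,\dots,t$, whereas you assign it to $\pm(2t+2)\equiv \mp(2t+1)$ and keep the old-only blocks of Lemma~\ref{T1mod4} verbatim---but both choices verify in the same way.
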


\begin{proof}
Develop the following $2t+1$ blocks modulo $4t+3$  to obtain the augmented design: 
\begin{center}
\begin{tabular}{ll}
$(0,2j, \infty_j)$, & for $j = 1, \dots , t$;\\ $(0,1, \infty_{t+1})$; & and \\
 $(0,2j+1,2t+2-2j)$, & for $j = 1, \dots , t$,
 \end{tabular}
 \end{center}
 where the $\infty_j$'s are the new points.  The verifications are similar to those in the proof of Lemma \ref{T1mod4}. 

 \end{proof}
 
 For even values of $v$, we construct base blocks that are developed modulo $v-1$. The $v$ original points are
 $\zed_{v-1}$ and $\infty$, and the additional nested points are $\infty_i$, $1 \leq i \leq w-v$. 
 
 \begin{Lemma}
\label{T0mod4}
For any $v = 4t$, $v \geq 4$, there is an optimal weak nesting of the $(v,2,1)$-BIBD with $w = 5t$.
\end{Lemma}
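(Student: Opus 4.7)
The plan is to construct the augmented design by a cyclic difference-family construction, developing base blocks modulo $v-1=4t-1$ as indicated in the paragraph preceding this lemma. Let the $v$ original points be $X=\zed_{4t-1}\cup\{\infty\}$ and the $t$ new nested points be $\infty_1,\ldots,\infty_t$, so that $|Y|=5t=w$. I propose the following $2t$ base blocks, with the third entry understood to be the nested point in each case:
\[(0,\,2j-1,\,\infty_j)\text{ for }j=1,\ldots,t;\quad (0,\,2k,\,t+k)\text{ for }k=1,\ldots,t-1;\quad (0,\,\infty,\,t).\]
Developed modulo $4t-1$, these yield $2t(4t-1)=\binom{4t}{2}$ augmented blocks, the correct total.

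The verification splits into three parts. First, the base-pair differences are $\{2j-1:1\le j\le t\}=\{1,3,\ldots,2t-1\}$ from the ``new-point'' blocks and $\{2k:1\le k\le t-1\}=\{2,4,\ldots,2t-2\}$ from the ``pure'' blocks, between them covering every nonzero difference of $\zed_{4t-1}$ exactly once, while the final block handles every pair $\{i,\infty\}$. Thus every pair of the underlying $(v,2,1)$-BIBD appears as the base pair of exactly one augmented block. Second, the \emph{nested}-pair differences within $\zed_{4t-1}$ contributed by the pure blocks and the $\infty$-block are
\[\bigcup_{k=1}^{t-1}\{t-k,\,t+k\}\;\cup\;\{t\}\;=\;\{1,2,\ldots,2t-1\},\]
with no repetitions. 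So every pair inside $\zed_{4t-1}$ has multiplicity exactly $2$ in the augmented design (once as a base pair, once as a nested pair). Third, each pair $\{\infty,i\}$ appears twice (once as a base pair of the $\infty$-block orbit, once as a nested pair from $\{\infty,t+i\}$); each pair $\{\infty_j,x\}$ with $x\in X$ appears at most twice, from the two nested pairs inside the orbit of $(0,2j-1,\infty_j)$; and pairs $\{\infty_j,\infty_k\}$ or $\{\infty_j,\infty\}$ never appear.

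The one genuine combinatorial step is the partition identity in the second part, which holds because $\{t-k:1\le k\le t-1\}=\{1,\ldots,t-1\}$ and $\{t+k:1\le k\le t-1\}=\{t+1,\ldots,2t-1\}$ are disjoint and together exhaust $\{1,\ldots,2t-1\}\setminus\{t\}$. The main obstacle is then just the careful bookkeeping of pair multiplicities. I also need to confirm that the $t-1$ pure blocks generate distinct orbits (immediate since $2k$ is one of the three differences of $(0,2k,t+k)$ and determines $k$), that each pure block is non-degenerate ($2k\ne t+k$ since $k<t$), and that the lower bound $w\ge 5t$ from Lemma~\ref{L1} makes the constructed nesting optimal.
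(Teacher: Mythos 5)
Your proposal is correct and takes essentially the same approach as the paper: a cyclic difference-family construction developing $2t$ base blocks modulo $4t-1$ over $\zed_{4t-1}\cup\{\infty\}$, with $t$ blocks nested by new points, $t-1$ pure blocks, and one $\infty$-block. The paper merely swaps the roles of the odd and even differences (its new-point blocks are $(0,2j,\infty_j)$ and its pure blocks $(0,2j-1,t+j-1)$, with $\infty$-block $(\infty,0,2t-1)$), and the same difference-count bookkeeping verifies both versions.
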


\begin{proof}
Develop the following $2t$ blocks modulo $4t-1$ to obtain the augmented design: 
\begin{center}
\begin{tabular}{ll}
$(0,2j, \infty_j)$, & for $j = 1, \dots , t$;\\ 
$(\infty, 0, 2t-1)$; & and \\
 $(0,2j-1,t+j-1)$, & for $j = 1, \dots , t-1$,
 \end{tabular}
 \end{center}
 where the $\infty_j$'s  (but not $\infty$) are the new points.  The verifications are similar to those in the proof of Lemma \ref{T1mod4}. 

 \end{proof}
 
  \begin{Lemma}
\label{T2mod4}
For any $v = 4t+2$, $v \geq 6$, there is an optimal weak nesting of the $(v,2,1)$-BIBD with $w = 5t+3$.
\end{Lemma}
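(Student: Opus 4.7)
The plan is to adapt the base-block approach of Lemmas \ref{T1mod4}, \ref{T3mod4}, and especially \ref{T0mod4}, by attaching one extra new point and one extra base-block orbit. Take the point set to be $\zed_{4t+1} \cup \{\infty\} \cup \{\infty_1, \ldots, \infty_{t+1}\}$, where $X = \zed_{4t+1} \cup \{\infty\}$ is the set of $v = 4t+2$ original points and $\infty_1, \ldots, \infty_{t+1}$ are the $t+1$ new points, so that $w = 5t+3$. Develop the following $2t+1$ base blocks modulo $4t+1$ (fixing $\infty$ and each $\infty_j$):
\begin{center}
\begin{tabular}{ll}
$(0, 2j, \infty_j)$, & for $j = 1, \dots , t$;\\
$(\infty, 0, 2t-1)$;\\
$(0, 2t-1, \infty_{t+1})$; & and\\
$(0, 2j-1, t+j-1)$, & for $j = 1, \dots , t-1$.
\end{tabular}
\end{center}
This produces $(2t+1)(4t+1) = \binom{4t+2}{2}$ augmented blocks. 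I would take the nested point to be $\infty_j$ in the first family, the shift of $0$ in the $(\infty, 0, 2t-1)$-orbit, $\infty_{t+1}$ in the $(0, 2t-1, \infty_{t+1})$-orbit, and the shift of $t+j-1$ in the last family; this gives a well-defined $\phi:\A\to Y$ mapping distinct original pairs to distinct augmented blocks.

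For pairs involving an exceptional point ($\infty$ or some $\infty_l$), the exceptional point lies in exactly one base block and is paired there with two elements of $\zed_{4t+1}$, so each pair $\{\infty, j\}$ or $\{\infty_l, j\}$ with $j\in\zed_{4t+1}$ appears exactly twice in the augmented design. Because no base block contains two exceptional points, no pair among $\{\infty, \infty_1, \ldots, \infty_{t+1}\}$ is ever covered.

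The main step is the difference analysis inside $\zed_{4t+1}$. The first family contributes the differences $2, 4, \ldots, 2t$ once each; the pair of base blocks $(\infty, 0, 2t-1)$ and $(0, 2t-1, \infty_{t+1})$ together contribute $2t-1$ twice; and the last family contributes the multiset $\{2j-1, t+j-1, t-j : 1 \le j \le t-1\}$. Since $\{t+j-1 : 1 \le j \le t-1\} = \{t, \ldots, 2t-2\}$ and $\{t-j : 1 \le j \le t-1\} = \{1, \ldots, t-1\}$ are disjoint with union $\{1, \ldots, 2t-2\}$, while $\{2j-1 : 1 \le j \le t-1\} = \{1, 3, \ldots, 2t-3\}$, the last family contributes each odd value in $\{1, 3, \ldots, 2t-3\}$ twice and each even value in $\{2, 4, \ldots, 2t-2\}$ once. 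Combining all contributions, every difference $d \in \{1, \ldots, 2t-1\}$ is covered exactly twice and $d = 2t$ is covered exactly once, so every pair in $\zed_{4t+1}$ appears between one and two times in the augmented design, which is therefore the required partial $(5t+3, 3, 2)$-BIBD. The case $t = 1$ is handled directly, since the last family is empty and only the first three base blocks are needed; optimality of $w = 5t+3$ follows from Lemma \ref{L1}. The main obstacle is simply choosing the last family so that its differences exactly interlock with what the other three families leave uncovered; the T0mod4 template adapted with an extra $\infty_{t+1}$-orbit attached at difference $2t-1$ accomplishes this, after which the verification is a mechanical parity-based check.
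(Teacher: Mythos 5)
Your construction is correct and is essentially the paper's: the paper develops the base blocks $(0,2j,\infty_j)$ for $j=1,\dots,t+1$, $(\infty,0,2t-1)$, and $(0,2j-1,t+j-1)$ for $j=1,\dots,t-1$ modulo $4t+1$, and your extra block $(0,2t-1,\infty_{t+1})$ generates the same orbit as the paper's $(0,2t+2,\infty_{t+1})$ since $2t+2\equiv-(2t-1)\pmod{4t+1}$. Your difference count and the appeal to Lemma \ref{L1} for optimality match the (omitted) verification the paper refers back to in Lemma \ref{T1mod4}.
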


\begin{proof}
Develop the following $2t+1$ blocks modulo $4t+1$ to obtain the augmented design: 
\begin{center}
\begin{tabular}{ll}
$(0,2j, \infty_j)$, & for $j = 1, \dots , t+1$;\\ 
$(\infty, 0, 2t-1)$; & and \\
 $(0,2j-1,t+j-1)$, & for $j = 1, \dots , t-1$,
 \end{tabular}
 \end{center}
 where the $\infty_j$'s (but not $\infty$) are the new points.  The verifications are similar to those in the proof of Lemma \ref{T1mod4}. 

 \end{proof}

Combining Lemmas \ref{T1mod4}--\ref{T2mod4}, we have the following complete result.

\begin{Theorem}
For all $v$, there is an optimal weak nesting of the $(v,2,1)$-BIBD with $w = \left\lceil \frac{5v-1}{4} \right\rceil$.
\end{Theorem}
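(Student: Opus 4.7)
The plan is to combine the lower bound of Lemma~\ref{L1} with the four residue-class constructions given in Lemmas~\ref{T1mod4}--\ref{T2mod4}. Since $w$ must be a positive integer, Lemma~\ref{L1} immediately yields $w \geq \lceil (5v-1)/4 \rceil$ for every $v$, and the bullet list following Lemma~\ref{L1} evaluates this ceiling in each residue class of $v$ modulo $4$, producing the values $5t+1$, $5t+4$, $5t$, and $5t+3$ when $v = 4t+1$, $4t+3$, $4t$, $4t+2$ respectively.

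For the matching upper bound, I would invoke Lemma~\ref{T1mod4}, Lemma~\ref{T3mod4}, Lemma~\ref{T0mod4}, and Lemma~\ref{T2mod4} in turn, each of which exhibits a cyclic weak nesting (with one additional fixed point $\infty$ in the even cases) that attains the ceiling in its residue class. Since every admissible $v \geq 4$ falls into exactly one of the four classes and each lemma applies from the smallest admissible value upward in that class, the lower bound is tight for all $v$, and the theorem follows by a one-line case split.

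The substantive work is not in this assembly step but in the four preceding construction lemmas, where one must verify, via a standard difference argument modulo $v$ or $v-1$, that the prescribed base blocks collectively cover every non-zero difference among the ``old'' points exactly once and generate every pair of the form $\{x, \infty_j\}$ exactly twice, with no pair $\{\infty_i, \infty_j\}$ ever arising. The main obstacle I would anticipate if one had to rebuild these constructions is choosing the second coordinates $2j$ and the third coordinates $2t-2j$, $t+j-1$, etc., so that both the ``differences $\{0, 2j\}$'' and the ``differences $\{2j+1, 2t-2j\}$'' exhaust the required difference sets without overlap; once the bookkeeping is done in those lemmas, as it already is, the present theorem requires nothing more than stating the four cases together.
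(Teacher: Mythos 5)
Your proposal is correct and follows exactly the paper's route: the paper obtains the theorem by combining the integer-rounded lower bound of Lemma~\ref{L1} (evaluated per residue class of $v$ modulo $4$) with the matching cyclic constructions of Lemmas~\ref{T1mod4}--\ref{T2mod4}. The assembly step and the observation that the real content lies in the difference-family verifications of those four lemmas are both exactly as in the paper.
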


\section{Weak nestings of  BIBDs with block size $k\geq 3$}
\label{weak>2}

We prove a lower bound on $w$ for weak nestings of $(v,k,\lambda)$-BIBDs. The bound is useful in the cases where a minimal nesting does not exist, i.e., when $k \leq 2 \lambda$. In the following bound, recall that
$r = \lambda(v-1) / (k-1)$.

\begin{Theorem}
\label{genweak.thm}
Suppose 
a $(v,k,\lambda)$-BIBD can be weakly nested into a partial $(w,k+1,\lambda +1)$-BIBD. Then 
\begin{equation}
\label{weak.ineq}
 w \geq \frac{r}{\lambda+1} + \frac{v + 2\lambda v + 1}{2(\lambda+1)} .
\end{equation}
\end{Theorem}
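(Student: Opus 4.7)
The plan is to generalize the double-counting argument used in the proof of Lemma \ref{L1}. Write $b = vr/k$ for the number of blocks of the $(v,k,\lambda)$-BIBD, let $(Y,\A')$ be the augmented partial $(w,k+1,\lambda+1)$-BIBD, and split $\A$ according to whether the nested point is old or new: set $\TTT = \{A \in \A : \phi(A) \in X\}$, so $|\A \setminus \TTT| \leq b$ and these are the blocks whose augmented block $A' = A \cup \{\phi(A)\}$ has its unique ``extra'' point in $Y \setminus X$. The key structural fact, exactly as in Lemma \ref{L1}, is that no block of $\A'$ contains two points of $Y \setminus X$, since the underlying block $A$ lies in $X$ and only $\phi(A)$ can be new.

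First I would bound the blocks whose nested point is new. Fix $y \in Y \setminus X$; every augmented block through $y$ contains $y$ together with $k$ points of $X$, and each pair $\{y,x\}$ with $x \in X$ occurs at most $\lambda+1$ times in $\A'$. Counting the pairs through $y$ in two ways gives that $y$ lies in at most $(\lambda+1)v/k$ blocks, so the total number of blocks in $\A \setminus \TTT$ is at most $(w-v)(\lambda+1)v/k$.

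Next I would bound $|\TTT|$ by counting pair-occurrences of old points. Each pair of points in $X$ occurs exactly $\lambda$ times in the original BIBD, and at most $\lambda+1$ times in $\A'$, so the new pairs of old points contributed by augmentation number at most $\binom{v}{2}$. Each block $A \in \TTT$ contributes exactly $k$ such new pairs, namely $\{\phi(A),x\}$ for $x \in A$, while blocks outside $\TTT$ contribute none. Hence $k|\TTT| \leq \binom{v}{2}$, i.e., $|\TTT| \leq v(v-1)/(2k)$.

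Combining the two estimates yields
\[
\frac{vr}{k} \;=\; b \;\leq\; (w-v)\,\frac{(\lambda+1)v}{k} \;+\; \frac{v(v-1)}{2k}.
\]
Multiplying by $k/v$ and solving for $w$ gives
\[
w \;\geq\; v + \frac{2r - (v-1)}{2(\lambda+1)} \;=\; \frac{r}{\lambda+1} + \frac{v + 2\lambda v + 1}{2(\lambda+1)},
\]
which is the claimed bound. There is no real obstacle here beyond careful pair-counting; the only subtlety is confirming that every ``new'' old-pair created by augmentation corresponds to a unique block of $\TTT$ (so that the $k|\TTT| \leq \binom{v}{2}$ step is tight in the counting sense), which follows immediately since in each $A \in \TTT$ the nested point $\phi(A)$ does not lie in $A$ and so contributes exactly $k$ genuinely new old-pairs.
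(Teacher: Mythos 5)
Your proposal is correct and follows essentially the same double-counting argument as the paper's proof: bound the number of augmented blocks through each new point by $(\lambda+1)v/k$, bound $|\TTT|$ by $\binom{v}{2}/k$ via the new old-point pairs, and compare the sum with the exact block count $b = vr/k$. The only cosmetic difference is that the paper states the per-new-point bound with a floor before relaxing it, whereas you work with the unfloored bound directly; the algebra and conclusion are identical.
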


\begin{proof}
Let $(Y, \A')$ be the partial $(w,k+1,\lambda+1)$-BIBD obtained 
from a weak nesting of a $(v,k,\lambda)$-BIBD $(X, \A)$ using the mapping $\phi$.
We observe that the following properties hold:
\begin{enumerate}
\item Every point in $Y \setminus X$ occurs in at most 
$\left\lfloor \frac{(\lambda +1)v}{k} \right\rfloor$ blocks.  
\item 
Let $\TTT =  \{A \in \A : \phi(A) \in X \} $. Then $|\TTT| \leq \frac{1}{k}\binom{v}{2}.$
\item 
Therefore the  number of blocks is at most 
\[ (w-v) \left\lfloor \frac{(\lambda +1)v}{k} \right\rfloor + |\TTT| \leq (w-v) \left\lfloor \frac{(\lambda +1)v}{k} \right\rfloor + \frac{1}{k}\binom{v}{2}.\] 
\item Since $(X,\A)$ is a BIBD,  the 
exact number of blocks is  $\lambda v(v-1)/(k(k-1))$, so 
\begin{equation}
\label{weakeq2} (w-v) \left\lfloor \frac{(\lambda +1)v}{k} \right\rfloor  + \frac{1}{k}\binom{v}{2} \geq \frac{\lambda v(v-1)}{k(k-1)}.\end{equation}
Since $\left\lfloor \frac{(\lambda +1)v}{k} \right\rfloor \leq \frac{(\lambda +1)v}{k}$, equation (\ref{weakeq2}) implies
\begin{equation}
\label{weakeq3} (w-v)  \frac{(\lambda +1)v}{k}   + \frac{1}{k}\binom{v}{2} \geq \frac{\lambda v(v-1)}{k(k-1)}.
\end{equation}
\item Simplifying (\ref{weakeq3}), we have
\begin{align*} 
(w-v)(\lambda +1)    + \frac{v-1}{2} &\geq \frac{\lambda (v-1)}{k-1}\\
(w-v)(\lambda +1)    + \frac{v-1}{2} &\geq r\\
w-v &\geq \frac{1}{\lambda+1} \left( r - \frac{v-1}{2}\right)\\
w &\geq v + \frac{r}{\lambda+1} - \frac{v-1}{2(\lambda+1)}\\
w &\geq \frac{r}{\lambda+1} + \frac{v + 2\lambda v + 1}{2(\lambda+1)}.
\end{align*}
\end{enumerate}
\end{proof}

\begin{Remark}
{\rm When $k = 2$ and  $\lambda = 1$, we have $r = v-1$ and the inequality (\ref{weak.ineq}) becomes $w \geq (5v-1)/4$, agreeing with Lemma \ref{L1}.}
\end{Remark}

The case $k=3$, $\lambda = 1$ is the Steiner triple system case. Since $k \geq 2 \lambda +1$, Theorem \ref{genweak.thm} does not provide a nontrivial bound.  However, it was shown in \cite{St85} that there is a perfect  nesting of an STS$(v)$ (i.e., a nesting into a $(v,4,2)$-BIBD) for all $v \equiv 1 \pmod{6}$. 
On the other hand, an STS$(v)$ with $v \equiv 3 \pmod{6}$ cannot be nested (this follows from Lemma \ref{2k+1.lem}, but this fact has actually been known for a very long time). Lindner and Rodger \cite{LR} found perfect nestings of $3$-GDDs of type $3^{v/3}$ for all $v \equiv 3 \pmod{6}$, $v \geq 15$. By adding a new point to all the groups, we obtain an (optimal) strong nesting of an STS$(v)$ with $w = v+1$ for all $v \equiv 3 \pmod{6}$, $v \geq 15$. These nestings are also optimal when considered as weak nestings.

We next consider $\lambda = 2$. A $(v,3,2)$-BIBD cannot have a perfect nesting, so we consider weak nestings of $(v,3,2)$-BIBDs.  A $(v,3,2)$-BIBD exists if and only if $v \equiv 0 \text{ or } 1 \bmod 3$. Here $r = v-1$. Theorem \ref{genweak.thm} tells us that 
\begin{equation}
\label{3weakbound}
  w \geq \frac{v-1}{3} + \frac{5v + 1}{6}  = \frac{7v - 1}{6}. 
  \end{equation}
  Of course $w$ must be an integer, so we can strengthen (\ref{3weakbound}) to give
  \begin{equation}
\label{3weakbound-int}
  w \geq  \left\lceil \frac{7v - 1}{6} \right\rceil. 
  \end{equation}
  
\subsection{$v \equiv 1 \bmod 6$}
  
\begin{Theorem}
\label{1mod6}
Suppose $v \equiv 1 \bmod 6$. Then there exists a $(v,3,2)$-BIBD that has an optimal weak nesting.
\end{Theorem}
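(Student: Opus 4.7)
For $v = 6s+1$, the bound (\ref{3weakbound-int}) reduces to $w \geq 7s+1$, which I aim to meet with equality by introducing exactly $s$ new points $\infty_1,\dots,\infty_s$. My plan is to build the required $(v,3,2)$-BIBD as the union of two carefully chosen STS$(v)$'s on a common point set $X$, and then nest the two copies separately: one copy will contribute the blocks of $\TTT$ (nested to old points in $X$), and the other will contribute the remaining blocks, each nested to one of the new points.

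For the first copy, I would take a nested STS$(v)$ $(X,\A_2)$, which exists for every $v \equiv 1 \bmod 6$ with $v \geq 7$ by \cite{St85}. Its (necessarily perfect) nesting into a $(v,4,2)$-BIBD uses only points of $X$ as nested points. The number of $\A_2$-blocks is $v(v-1)/6 = s(6s+1)$, exactly matching with equality the upper bound on $|\TTT|$ appearing in the proof of Theorem \ref{genweak.thm}. Moreover, every pair of $X$-points appears exactly twice in the augmented blocks coming from $\A_2$: once as an ``old'' pair from the triple and once as a ``new'' pair involving the nested point.

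For the second copy, I would take a cyclic STS$(v)$ $(X,\A_1)$ on $X = \zed_v$, which exists for every $v \equiv 1 \bmod 6$ (the excluded value $v = 9$ is not in this residue class). Its $s$ orbits $O_1,\dots,O_s$ under $\zed_v$ each have length $v$, and the cyclic action forces every point of $X$ to lie in exactly three blocks of each orbit, since the orbit carries $3v$ point-incidences distributed uniformly across the $v$ points. Setting $\phi(A) = \infty_j$ whenever $A \in O_j$ then makes each pair $\{x,\infty_j\}$ occur exactly three times in the augmented design.

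The verification that $(Y, \{A \cup \{\phi(A)\} : A \in \A_1 \cup \A_2\})$ with $Y = X \cup \{\infty_1,\dots,\infty_s\}$ is a partial $(7s+1,4,3)$-BIBD then splits along pair types: a pair of $X$-points occurs once in augmented $\A_1$-blocks and twice in augmented $\A_2$-blocks (total three); a pair $\{x,\infty_j\}$ occurs exactly three times as above; a pair $\{\infty_i,\infty_j\}$ never occurs, since each augmented block contains at most one new point. The only subtlety---hardly an obstacle once the bookkeeping is laid out---is recognizing that both the ``$|\TTT|$ tight'' and ``$\{x,\infty_j\}$ occurs thrice'' conditions are \emph{forced} by equality in the lower bound, and that a nested STS$(v)$ delivers the first property while a cyclic STS$(v)$ delivers the second essentially for free.
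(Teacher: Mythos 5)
Your construction is correct and is essentially the paper's own proof: both take the union of a perfectly nested STS$(v)$ (from \cite{St85}), whose blocks are nested with old points, and a cyclic STS$(v)$, whose $(v-1)/6$ full orbits are each assigned a distinct new point, yielding $w=(7v-1)/6$. The pair-count verification you give (old pairs three times, pairs with a new point three times via the orbit structure, no pair of two new points) is the routine check the paper leaves to the reader.
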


\begin{proof}
There is an STS$(v)$ that has a perfect nesting, say $(X, \A)$, from \cite{St85}.
Let $\phi : \A \rightarrow X$ be a nesting of this STS. 

Let $(X, \B)$ be a cyclic STS$(v)$. Since $v \equiv 1 \bmod 6$, a cyclic STS$(v)$, say $(X, \B)$, has $(v-1)/6$ orbits of $v$ blocks under the cyclic group $\zed_v$.
Denote the orbits by $\Or_1, \dots , \Or_{(v-1)/6}$. For $1 \leq i \leq (v-1)/6$, and for each block  $B \in \Or_i$, define $\phi(B) = \infty_i$ (the $\infty_i$'s are new points).

It is easy to see that $\phi$ defines a weak nesting of the $(v,3,2)$-BIBD $(X, \A \cup \B)$. Since we have introduced $(v-1)/6$ new points, we have $w = (7v-1)/6$ and the nesting is optimal, from (\ref{3weakbound}).
\end{proof}

\begin{Example}
\label{E7}
{\rm We construct an optimal weakly nested $(7,3,2)$-BIBD using Theorem \ref{1mod6}.
We start with a nested STS$(7)$: develop 
$(1,2,4,0)$ modulo $7$ (note that the last point in each augmented block is the nested point).
Now we take a cyclic STS$(7)$ and add a new point $\infty$ to each block, obtaining seven more blocks:
$(1,2,4,\infty)$ modulo $7$. The 14 blocks thus obtained form a weakly nested $(7,3,2)$-BIBD on $w = 8$ points.}
\end{Example}

\subsection{$v \equiv 3 \bmod 6$}

We first present an optimal nesting for $v = 9$.

\begin{Example}
\textup{(Marco Buratti \cite{Marco})}

\label{E9}
{\rm We construct an optimal weakly nested $(9,3,2)$-BIBD. We take two copies of the affine plane of order $3$ (i.e., AG$(2,3)$) on points $1, \dots , 9$.
Suppose one parallel class consists of the three blocks $\{1,2,3\}$, $\{4,5,6\}$ and $\{7,8,9\}$.
We nest the two copies of this parallel class as follows:
\[
\begin{array}{l@{\hspace{.5in}}l@{\hspace{.5in}}l}
(1,2,3,4) & (4,5,6,7) & (7,8,9,1) \\
(1,2,3,5) & (4,5,6,8) & (7,8,9,2) 
\end{array}
\]
For the remaining nine blocks in the first copy of AG$(2,3)$, adjoin $\infty_1$ to each block. 
For the remaining nine blocks in the second copy of AG$(2,3)$, adjoin $\infty_2$ to each block. 
Thus the augmented BIBD has $w = 11$ points; the new points are $\infty_1$  and $\infty_2$. The nesting is optimal, from (\ref{3weakbound-int}).
}
\end{Example}


\begin{Theorem}
\label{3mod6}
Suppose $v \equiv 3 \bmod 6$, $v \geq 9$. Then there exists a $(v,3,2)$-BIBD that has an (optimal) weak nesting with 
$w = (7v+3)/6$.
\end{Theorem}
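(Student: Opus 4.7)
For $v = 9$, the result is given by Example~\ref{E9}. For $v \equiv 3 \bmod 6$ with $v \ge 15$, I propose the following modular construction. Fix a partition $\G = \{G_1,\ldots,G_{v/3}\}$ of a $v$-element set $X$ into groups of size $3$. By Lindner--Rodger \cite{LR}, there is a perfectly nested $3$-GDD $(X,\G,\B_1)$ on these groups, with nesting map $\phi_1$ whose augmented blocks form a $(4,2)$-GDD of type $3^{v/3}$. By \cite{CR}, there is also a KTS$(v)$ containing $\G$ as one of its parallel classes; removing that class yields a resolvable $3$-GDD $(X,\G,\B_2)$ with $(v-3)/2$ parallel classes, each of size $v/3$. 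Finally, let $\B_3$ consist of each group $G_i$ taken as a block, repeated twice; this contributes $2$ more parallel classes.

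I claim $(X, \B_1 \cup \B_2 \cup \B_3)$ is a $(v,3,2)$-BIBD: between-group pairs are covered once each by $\B_1$ and $\B_2$, while within-group pairs are covered twice by $\B_3$; the block count is $v(v-3)/6 + v(v-3)/6 + 2v/3 = v(v-1)/3$. To nest this BIBD, apply $\phi_1$ internally on $\B_1$, and absorb the remaining $(v-3)/2 + 2 = (v+1)/2$ parallel classes (from $\B_2 \cup \B_3$) with new points. Partition these $(v+1)/2$ classes into $m = (v+3)/6$ bunches of at most $3$ classes each (possible since $3m = (v+3)/2 \ge (v+1)/2$), and assign a new point $\infty_i$ as the nested point for every block in the $i$-th bunch.

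To verify optimality I check the pair multiplicities in the augmented design. Between-group old--old pairs appear twice in the augmented $(4,2)$-GDD arising from $\B_1$, plus once more from the augmented $\B_2$, totalling $3$; within-group old--old pairs appear twice (solely from the augmented $\B_3$); each pair $\{\infty_i, x\}$ with $x \in X$ occurs exactly once per parallel class absorbed by $\infty_i$, hence at most $3$ times; and no pair of two new points is ever in a common block. Thus the augmented design is a partial $(w, 4, 3)$-BIBD with $w = v+m = (7v+3)/6$, matching the lower bound~(\ref{3weakbound-int}). The work is principally organizational; the only nontrivial ingredient is the Lindner--Rodger perfect nesting, which I invoke as a black box, and a mild compatibility check that a KTS$(v)$ can be chosen to contain the same group partition $\G$ used by Lindner--Rodger.
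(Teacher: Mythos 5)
Your proposal is correct and follows essentially the same route as the paper: a Lindner--Rodger perfectly nested $3$-GDD of type $3^{v/3}$, combined with a resolvable $3$-GDD obtained from a KTS$(v)$ by designating one parallel class as groups, with the remaining parallel classes (including the two copies of the group partition) absorbed three-at-a-time by $(v+3)/6$ new points. The only cosmetic difference is that the paper dedicates one new point to the two copies of the groups and uses $t$ further points for the $3t$ parallel classes, whereas you bunch all $(v+1)/2$ classes uniformly; both give the same count $w=(7v+3)/6$.
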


\begin{proof}
Denote $v = 6t+3$. The case $t=1$ is done in Example \ref{E9}, so we can assume $t \geq 2$. 
For $t \geq 2$, there is a $3$-GDD of type $3^{2t+1}$ that has a perfect nesting, say $(X, \G, \A)$ (see \cite{LR}).
Let $\phi : \A \rightarrow X$ be a nesting of this GDD. 

Let $(X, \B)$ be a  KTS$(v)$. $(X, \B)$ has $(v-1)/2 = 3t+1$ parallel classes.
Take one parallel class to be groups. We can assume that this parallel class is the same as $\G$. So we obtain a 
resolvable $3$-GDD of type $3^{2t+1}$, $(X, \G, \B)$.
There are $3t$ parallel classes in $\B$.

Let $\infty_i$, for $1 \leq i \leq t$, be new points.
Associate each $\infty_i$ for $1 \leq i \leq t$ with three parallel classes in $\B$, 
 in such a way that every parallel class has a unique $\infty_i$ associated with it.
For every block $A \in \B$, let $\phi(A)$ be the point associated with the parallel class containing $A$.

For each group $G \in \G$, take two copies of $G$ and nest both copies with $\infty$, where $\infty$ is another new point. It is easy to see that we have a weak nesting of the resulting $(v,3,2)$-BIBD. Since we have introduced $t+1$ new points, we have $w = 6t+3 + t+1 = 7t+4$ and the nesting is optimal, from (\ref{3weakbound-int}).
\end{proof}

\begin{Remark} 
{\rm It is also possible to prove Theorem \ref{3mod6} using the method of Theorem \ref{1mod6}.
Cyclic STS$(v)$ exist for $v \equiv 3 \bmod 6$, $v \geq 15$. 
Such a design will necessarily consist of one parallel class of blocks (a ``short orbit'' under the action of $\zed_v$) along with a collection of full orbits. The reader can fill in the remaining details.}
\end{Remark}

Our remaining constructions in this section, as well as the constructions for strong nestings of $(v,3,2)$-BIBDs, all follow a similar  strategy, which can be viewed as a generalization of the techniques used in proving 
Theorems \ref{1mod6} and \ref{3mod6}.
Suppose we are trying to find a (weak or strong) nesting of a $(v,3,2)$-BIBD.
We use the following approach.
\begin{enumerate}
\item
Find a $3$-GDD on $v$ points that can be nested.
\item Find a second $3$-GDD on $v$ points, having the same set of groups, in which the blocks can be partitioned into 
parallel classes (i.e., we have a resolvable GDD) or holey parallel classes (i.e., we have a frame).
Associate new points with the parallel or holey parallel classes of this GDD so the blocks of the GDD are nested (how this is done depends on whether we are seeking a weak or a strong nesting).
\item For each group $G$, find an appropriate nesting of a $(|G|,3,2)$-BIBD. In general, this requires additional new points. However, in the case where the GDD from step 2 is a frame,  it may possible to reuse some of the new points introduced in step 2.
\end{enumerate}

\subsection{$v \equiv 0 \bmod 6$}

We have an almost complete result for $v \equiv 0 \bmod 6$. We first construct a couple of small examples.

\begin{Example}
\label{E6}
{\rm We construct an optimal weakly nested $(6,3,2)$-BIBD.
Develop the following two base blocks modulo $5$ 
(the last point in each augmented block is the nested point):
$(0,1,3,\infty_1)$, $(\infty,0,1,2)$.
The $(6,3,2)$-BIBD has point set $\zed_{5} \cup \{\infty\}$. The augmented BIBD has $w = 7$ points; the  new point is $\infty_1$. The nesting is optimal, from (\ref{3weakbound-int}).
}
\end{Example}

\begin{Example}
\label{E12}
{\rm We construct an optimal weakly nested $(12,3,2)$-BIBD.
Develop the following four base blocks modulo $11$ 
(the last point in each augmented block is the nested point):
$(0,1,3,\infty_1)$, $(0,1,4,\infty_2)$, $(0,2,6,5)$, $(\infty,0,5,9)$.
The $(12,3,2)$-BIBD has point set $\zed_{11} \cup \{\infty\}$. The augmented BIBD has $w = 14$ points; the two new points are $\infty_1$ and $\infty_2$.  The nesting is optimal, from (\ref{3weakbound-int}).
}
\end{Example}

Our general construction for $v \equiv 0 \bmod 6$ makes use of nested and resolvable $3$-GDDs with all groups having size $6$. 

\begin{Theorem}
\label{0mod6}
There exists an optimal weakly nested $(6t,3,2)$-BIBD for all $t\geq 1$, $t \neq 3,6$. 
\end{Theorem}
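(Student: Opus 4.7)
The plan is to dispose of $t \in \{1,2\}$ by the explicit Examples~\ref{E6} and \ref{E12}, and to handle every remaining $t \geq 4$, $t \neq 6$, uniformly by instantiating the three-step strategy laid out immediately before the theorem, with all groups having size $6$. Since $v=6t$, the lower bound (\ref{3weakbound-int}) reads $w\geq \lceil (42t-1)/6\rceil = 7t$, so the target is a weak nesting with exactly $t$ new points.

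For the first step, invoke a nested $3$-GDD of type $6^t$, $(X,\G,\A_1)$ with $\G = \{G_1,\ldots,G_t\}$ and nesting $\phi_1$; existence of such a GDD for the relevant range is the combinatorial input we borrow from the literature. Because $k = 3 = 2\lambda+1$ the nesting is automatically perfect, so $\phi_1(A)\in X$ for every $A\in\A_1$, and the blocks of $\A_1$ cover each between-group pair exactly once. For the second step, take a $3$-frame of type $6^t$, $(X,\G,\A_2)$, on the same points with the same groups; each $G_i$ carries exactly $|G_i|/2 = 3$ holey parallel classes. Introduce fresh points $\infty_1,\ldots,\infty_t$, and define $\phi_2(B) = \infty_i$ whenever $B$ lies in a holey parallel class with hole $G_i$. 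Then every $x\notin G_i$ meets $\infty_i$ in precisely three augmented blocks, achieving multiplicity $\lambda+1 = 3$, while no pair $\{\infty_i,\infty_j\}$ is created.

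For the third step, on each $G_i$ place the blocks of a weakly nested $(6,3,2)$-BIBD, structured exactly as in Example~\ref{E6}, using $\infty_i$ as its sole new point. This covers each within-group pair exactly $\lambda = 2$ times and introduces only pairs $\{x,\infty_i\}$ with $x\in G_i$, each at most $3$ times, which is fine because the frame contributed nothing to such pairs. Summing the three block collections, every in-$X$ pair is covered exactly twice, every $\{x,\infty_i\}$ pair at most three times, and no $\{\infty_i,\infty_j\}$ pair occurs, so we obtain a weak nesting of a $(6t,3,2)$-BIBD with $w = 6t + t = 7t$, matching the bound and hence optimal.

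The main obstacle is the combinatorial existence ledger: I must confirm that both a nested $3$-GDD of type $6^t$ and a $3$-frame of type $6^t$ exist precisely for $t\geq 4$, $t\neq 6$, and that the exceptions $t = 3$ and $t = 6$ in the theorem statement match the gaps (or currently open cases) in those spectra. Sourcing the right references for these ingredients---and possibly patching a few small boundary values by hand with direct ad hoc constructions similar to Examples~\ref{E6} and \ref{E12}---is the principal technical chore, while the design-theoretic verification above is routine once the ingredients are in hand.
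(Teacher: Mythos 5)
Your construction is correct and reaches the same $w=7t$, but it instantiates step 2 of the general strategy differently from the paper. The paper's proof of Theorem~\ref{0mod6} takes a \emph{resolvable} $3$-GDD of type $6^t$ (from \cite[Theorem 19.33]{CR}), which has $3(t-1)$ full parallel classes; it spends $t-1$ new points by assigning each to three parallel classes, and then fills every group with a weakly nested $(6,3,2)$-BIBD using one \emph{additional} new point $\infty$ shared by all $t$ groups, for $w=6t+(t-1)+1=7t$. You instead take a $3$-\emph{frame} of type $6^t$, assign one new point $\infty_i$ to the three holey parallel classes with hole $G_i$, and then \emph{reuse} $\infty_i$ as the new point of the fill-in on $G_i$ --- legitimate precisely because the holey parallel classes with hole $G_i$ avoid $G_i$, so $\infty_i$ has not yet met any point of $G_i$; this again gives $w=6t+t=7t$. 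Your route is exactly the frame-plus-reuse trick the paper deploys for $v\equiv 4\bmod 6$ (Theorems~\ref{4mod12} and~\ref{10mod12}), transplanted to type $6^t$; it buys nothing numerically here but is arguably more uniform across the congruence classes, while the paper's resolvable-GDD version leans on a slightly more classical ingredient. Two bookkeeping points you should make explicit: (i) the existence ledger is fine --- $3$-frames of type $6^t$ exist for all $t\geq 4$ \cite{St87}, so the exceptions $t=3,6$ come only from the nested $3$-GDD of type $6^t$ \cite[Lemma 3]{St85}, matching the theorem statement; and (ii) in the final tally you verify the underlying pair count ($2$) and the $\{x,\infty_i\}$ count ($\leq 3$), but you should also record that augmented between-group old pairs occur $2+1=3$ times (twice from the perfectly nested GDD, once from the frame blocks) and augmented within-group old pairs at most $3$ times (from Example~\ref{E6} being a partial $(7,4,3)$-BIBD), so the bound $\lambda+1=3$ is met everywhere.
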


\begin{proof}
The cases $t = 1,2$ were done in Examples \ref{E6} and \ref{E12}. So we can assume
$t \geq 4$, $t \neq 6$. For these values of $t$, it was shown in \cite[Lemma 3]{St85} that there exists a
nested $3$-GDD of type $6^t$, say $(X,\G,\A)$. Denote the nesting by $\phi : \A \rightarrow X$.

Next, take a resolvable $3$-GDD of type $6^t$, say $(X,\G,\B)$ (see \cite[Theorem 19.33]{CR}). We can assume that the points and groups are the same as in the nested GDD constructed above. This GDD has $3(t-1)$ parallel classes, which we denote
$\P_1, \dots , \P_{3(t-1)}$. Let $\infty_i$ ($1 \leq i \leq t-1$) be new points.  
We associate an $\infty_i$ with each parallel class in such a way that each $\infty_i$ is associated with three of the  parallel classes. We extend the nesting $\phi$ by defining  $\phi(B) = \infty_i$, where 
$\infty_i$ the new point associated with the parallel class containing $B$, for every block $B \in \B$.

Finally, replace each group $G \in \G$ by by a weakly nested  $(6,3,2)$-BIBD on the points in $G \cup \{\infty\}$, 
where $\infty$ is another new point
(see Example \ref{E6}).
The result is a weakly nested $(6t,3,2)$-BIBD with $w = 6t+ t - 1 + 1 = 7t$, which is optimal, from (\ref{3weakbound-int}).
\end{proof}

\subsection{$v \equiv 4 \bmod 6$}

We divide the case $v \equiv 4 \bmod 6$ into two subcases, $v \equiv 4 \bmod 12$ and $v \equiv 10 \bmod 12$.  In our constructions, we will be making use of $4$-GDDs with group sizes two and five. For up-to-date information on this topic, see \cite{ABBC}. We also use  $3$-frames of type $2^4$, which were constructed in \cite{St87}.

First we look at the case $v \equiv 4 \bmod 12$.
Our main theorem for $v \equiv 4 \bmod 12$ provides a construction in which $w$ is at most one greater than the optimal value.

\begin{Theorem}
\label{4mod12}
There exists a weakly nested $(12t+4,3,2)$-BIBD with $w = 14t+6$ for all $t\geq 2$. 
\end{Theorem}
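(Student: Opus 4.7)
The plan is to apply the three-step template laid out in the remark following Theorem~\ref{3mod6}, taking the group structure on $X$ to be $4^{3t+1}$. Decompose the target $(12t+4,3,2)$-BIBD as $\A = \A_1 \cup \A_2^{(G)} \cup \A_2^{(W)}$, where $\A_1$ and $\A_2^{(G)}$ are both $3$-GDDs of type $4^{3t+1}$ on the same groups and $\A_2^{(W)}$ consists of a $(4,3,2)$-BIBD on each of the $3t+1$ groups. Take $\A_1$ to be a perfectly nested $3$-GDD of type $4^{3t+1}$ (using no new points). Its augmentation raises every between-group pair up to count $\lambda+1 = 3$ while adding nothing within any group, so every block of $\A_2^{(G)} \cup \A_2^{(W)}$ must be nested with a new point.

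The key step is to realize $\A_2^{(G)}$ as a $3$-frame of type $4^{3t+1}$; such a frame exists since $3t+1 \equiv 1 \pmod 3$. The frame has $6t+2$ holey parallel classes---two associated with each group---each of $4t$ blocks and partitioning $X \setminus G$ for the corresponding group $G$. To nest them efficiently, construct a $2$-regular $\{2,3\}$-uniform hypergraph $H$ on the $3t+1$ group labels consisting of $2t$ triples and one pair (so that each group label appears in exactly two hyperedges). Such an $H$ is easy to build explicitly for every $t \geq 2$: for instance, for $t=2$ one can take the pair $\{a,b\}$ and triples $\{a,3,4\}, \{b,5,6\}, \{3,5,7\}, \{4,6,7\}$, and analogous recipes extend to larger $t$. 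Introduce $2t+1$ new points, one per hyperedge $e \in H$, and have $\infty_e$ nest all the holey parallel classes whose hole is one of the groups in $e$. Since $\infty_e$ then nests at most three holey parallel classes from three distinct groups, an easy count shows that every pair $\{\infty_e, x\}$ appears at most $|e| \leq 3$ times in the augmented design (precisely $|e|$ times if the group of $x$ is not in $e$, and $|e|-1$ times if it is).

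Finally, introduce one more new point $\infty_*$ and have it nest every block of $\A_2^{(W)}$ (altogether $4(3t+1)=v$ blocks). Each point $x \in X$ lies in exactly $3$ of these blocks---the three blocks of its own group's within-group BIBD---so the pair $\{\infty_*, x\}$ has count exactly $3$, within the cap. In total, $2t+2$ new points are introduced, giving $w = 14t+6$ as required; verification that no pair exceeds $\lambda+1 = 3$ is then routine, since between-group pairs hit exactly $3$, within-group pairs hit exactly $2$, and the only pairs involving new points were bounded above. The main obstacle is establishing the existence of both a nested $3$-GDD of type $4^{3t+1}$ and a $3$-frame of type $4^{3t+1}$ for every $t \geq 2$; both are standard ingredients---the nested $3$-GDDs can be built by product techniques as in \cite{St85}, and $3$-frames of type $4^n$ with $n \equiv 1 \pmod 3$ are available from \cite{St87}---but a few small-$t$ cases may require ad hoc handling, possibly using the $4$-GDDs with group sizes two and five mentioned at the start of the section.
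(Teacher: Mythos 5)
Your proposal is correct and follows essentially the same route as the paper's proof: a perfectly nested $3$-GDD of type $4^{3t+1}$, plus a $3$-frame of the same type on the same groups whose $6t+2$ holey parallel classes are nested by $2t+1$ new points (at most three classes each), plus $(4,3,2)$-BIBDs filling the groups, all nested by one further new point, giving $w=14t+6$; the paper obtains both auxiliary ingredients exactly as you guess, by giving weight two to a $4$-GDD of type $2^{3t+1}$ and filling its blocks with nested $3$-GDDs and $3$-frames of type $2^4$. One caution: read literally, having $\infty_e$ nest \emph{all} holey parallel classes whose hole lies in $e$ would assign $2|e|$ classes to $\infty_e$ and push some pairs $\{\infty_e,x\}$ to multiplicity $6$; you clearly intend one of each group's two classes per hyperedge containing that group (which is what makes $6t+2=2t\cdot 3+1\cdot 2$ balance), and in fact the hypergraph is unnecessary, since for a weak nesting any assignment of at most three holey parallel classes to each new point already bounds every pair $\{\infty_i,x\}$ by $3$.
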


\begin{proof}
It is known (e.g., see \cite{ABBC}) that there is a $4$-GDD of type $2^{3t+1}$ for any $t \geq 2$. Give every point weight two and apply the Fundamental Frame Construction from \cite{St87}, filling in $3$-frames of type $2^4$ for every block. The result is a $3$-frame of type $4^{3t+1}$.
This frame has $6t+2$ holey parallel classes.

Starting again from the $4$-GDD of type $2^{3t+1}$ and giving every point weight two, we apply Wilson's Fundamental GDD construction \cite[\S IV.2.1, Theorem 2.5]{CD}. We can replace every block by a nested $3$-GDD of type $2^4$ (see \cite{St87}). We obtain a nested $3$-GDD of type $4^{3t+1}$. We can assume that the groups of the nested $3$-GDD are the same as the groups of the $3$-frame.

Let $\infty_1, \dots , \infty_{2t+1}$ be new points. Associate a new point with every holey parallel class of the $3$-frame in such a way that every new point is associated with at most three holey parallel classes.
Then nest every block $A$ with the new point associated with the holey parallel class containing $A$.
We replace every group $G$ with a weakly nested $(4,3,2)$-BIBD on $G \cup \{\infty\}$, where $\infty$ is another new point (see Example \ref{E4}).
The result is a weakly nested $(12t+4,3,2)$-BIBD with $w = 12t+4 + 2t+1 + 1 = 14t+6$.

The lower bound on $w$ obtained in (\ref{3weakbound-int}) is \[w \geq \left\lceil \frac{7(12t+4) - 1}{6}\right\rceil  = \left\lceil \frac{84t+27}{6}\right\rceil = 14t+5.\]
Hence the value of $w$ in the nesting we have constructed is at most one greater than the optimal value.
\end{proof}

A variation of Theorem \ref{4mod12} can be used when $v \equiv 10 \bmod 12$. Here we again obtain nestings in which  $w$ is at most one greater than the optimal value. First we look at the case $v = 10$. 
\begin{Example}
\textup{(Marco Buratti \cite{Marco})}
\label{E10}
{\rm A weakly nested $(10,3,2)$-BIBD with $w = 12$. The BIBD is defined on points
$\mathbb{Z}_7 \cup \{ \infty_1,\infty_2,\infty_3\}$. The two new points are $\{A,B\}$.
The blocks in the augmented design are as follows:
\[
\begin{array}{l@{\hspace{.5in}}l@{\hspace{.5in}}l}
(1,2,4,0) \bmod 7& (\infty_1, \infty_2, \infty_3, 0) & (\infty_1, \infty_2, \infty_3, 6)\\ \hline
(\infty_1, 0,1, B) & (\infty_2, 0, 2, B) & (\infty_3, 0, 3, A) \\
(\infty_1, 1,2, \infty_2) & (\infty_2, 1, 3, A) & (\infty_3, 1, 4, \infty_1) \\
(\infty_1, 2,3, A) & (\infty_2, 2,4, \infty_3) & (\infty_3, 2,5, B) \\
(\infty_1, 3,4, B) & (\infty_2, 3,5, B) & (\infty_3, 3,6, B) \\
(\infty_1, 4,5, A) & (\infty_2, 4,6, A) & (\infty_3, 4,0, B) \\
(\infty_1, 5,6, B) & (\infty_2, 5,0, A) & (\infty_3, 5,1, A) \\
(\infty_1, 6,0, A) & (\infty_2, 6,1, B) & (\infty_3, 6,2, A) \\
\end{array}
\]
The result is an (optimal) weak nesting of a $(10,3,2)$-BIBD with $w =12$.
}
\end{Example}

\begin{Theorem}
\label{10mod12}
There exists a weakly nested $(12t+10,3,2)$-BIBD with $w = 14t+13$ for all $t\geq 2$. 
\end{Theorem}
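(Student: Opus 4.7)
The plan is to adapt the construction of Theorem~\ref{4mod12} by starting from a 4-GDD whose group type contains one size-5 group, so that the resulting frame has a single hole of size~$10$ in place of one of the size-4 holes. I would begin with a 4-GDD of type $2^{3t} 5^1$, which exists for $t \geq 2$ (see \cite{ABBC}). Giving every point weight~$2$ and applying the Fundamental Frame Construction from \cite{St87}, filling in a 3-frame of type $2^4$ for every block, produces a 3-frame of type $4^{3t} 10^1$ on $12t+10$ points, which has $6t+5$ holey parallel classes: two per size-4 group and five associated with the size-10 hole. Starting from the same 4-GDD and applying Wilson's Fundamental GDD Construction with weight~$2$, substituting a nested 3-GDD of type $2^4$ on each block, produces a nested 3-GDD of type $4^{3t} 10^1$ on the same set of groups.

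Next, introduce $2t$ new points $\infty_1,\dots,\infty_{2t}$ together with two further new points $A$ and $B$. I would assign each $\infty_i$ to be the nested point for three of the $6t$ holey parallel classes associated with size-4 groups, so that every such class is covered exactly once. I would assign $A$ to be the nested point for three of the five holey parallel classes associated with the size-10 hole, and $B$ to be the nested point for the remaining two. Then I would fill each size-4 group with the weakly nested $(4,3,2)$-BIBD of Example~\ref{E4}, sharing a single additional new point $\infty$ across all such groups, and fill the size-10 group using the weakly nested $(10,3,2)$-BIBD of Example~\ref{E10}, with $A$ and $B$ playing the role of its two new points. The total new-point count is $2t + 1 + 2 = 2t+3$, so $w = 14t+13$.

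The main step to check carefully is that the dual role played by $A$ and $B$ does not cause any pair to occur more than $\lambda + 1 = 3$ times in the augmented design. The new points introduced in Example~\ref{E10} only form pairs with points of the size-10 group (and with each other zero times, as one verifies directly from the blocks of the example). On the other hand, each of the five holey parallel classes associated with the size-10 hole partitions the remaining $12t$ points of $X$, so nesting three such classes with $A$ gives $A$ exactly three new pairings with each point outside the size-10 group, and nesting two such classes with $B$ gives $B$ at most two. Since these two roles pair $A$ (respectively $B$) with disjoint sets of partners, no pair is overused. The remaining verifications---for the $\infty_i$'s and for the shared point $\infty$ in the size-4 group fills---go through exactly as in the proof of Theorem~\ref{4mod12}.
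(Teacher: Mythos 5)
Your proposal is correct and follows essentially the same route as the paper's proof: the same 4-GDD of type $2^{3t}5^1$ blown up by weight 2 to a 3-frame and a nested 3-GDD of type $4^{3t}10^1$, the same allocation of $2t$ new points to the $6t$ holey parallel classes of the size-4 holes and of two further points (your $A,B$) split $3+2$ over the five holey parallel classes of the size-10 hole, and the same reuse of those two points inside the filled-in nested $(10,3,2)$-BIBD, justified exactly as you do by the fact that those holey parallel classes avoid the size-10 group. The only differences are notational, plus your slightly more explicit check that the reuse keeps every pair within multiplicity $\lambda+1=3$.
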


\begin{proof}
It is known (see \cite{ABBC}) that there is a $4$-GDD of type $2^{3t}5^1$ for any $t \geq 3$. Give every point weight two and apply the Fundamental Frame Construction, filling in $3$-frames of type $2^4$ for every block. The result is a $3$-frame of type $4^{3t}10^1$.
This frame has $6t+5$ holey parallel classes.

Starting again from the $4$-GDD of type $2^{3t}5^1$ and giving every point weight two, we can replace every block by a nested $3$-GDD of type $2^4$. We obtain a nested $3$-GDD of type $4^{3t}10^1$. We can assume that the groups of the nested $3$-GDD are the same as the groups of the $3$-frame.  Let's name the groups $G_1, \dots , G_{3t+1}$, where $G_{3t+1}$ is the group of size $10$. There are two holey parallel classes associated with each of the first $3t$ groups and five holey parallel classes associated with the last group, $G_{3t+1}$.

Let $\infty_1, \dots , \infty_{2t}$ be new points. Associate a new point with the holey parallel classes of the $3$-frame corresponding to the first $3t$ groups in such a way that every new point is associated with exactly three holey parallel classes (note that $3t \times 2 = 2t \times 3$).
Then nest every block $A$ with the new point associated with the holey parallel class containing $A$.
For the last group, we use two additional new points, say $\infty_{2t+1}$ and $\infty_{2t+2}$. We associate $\infty_{2t+1}$ with three holey parallel classes and we associate $\infty_{2t+2}$ with two holey parallel classes.

We still need to replace the groups by nested $(4,3,2)$-BIBDs and a nested $(10,3,2)$-BIBD. For each group $G_i$ of size four, we need one new point for the nesting since the nested $(4,3,2)$-BIBD from Example \ref{E4} has $w = 5$. We will use a new point $\infty$.
For the last group, $G_{3t+1}$, of size ten, we need two new points to nest it since the nested $(10,3,2)$-BIBD from Example \ref{E10} has $w = 12$. However, here we can reuse $\infty_{2t+1}$ and $\infty_{2t+2}$ since neither of these points have occurred with any of the points in  $G_{3t+1}$.\footnote{Note that we cannot reuse any of $\infty_1, \dots , \infty_t$ since each of these points is associated with more than one hole.} The result is a weakly nested $(12t+4,3,2)$-BIBD with $w = 12t+10 + 2t+2 + 1 = 14t+13$.

The lower bound on $w$ obtained in (\ref{3weakbound-int}) is \[w \geq \left\lceil \frac{7(12t+10) - 1}{6} \right\rceil = \left\lceil\frac{84t+69}{6}\right\rceil = 14t + 12.\]
Hence the value of $w$ in the nesting we have constructed is at most one greater than the optimal value.
\end{proof}

\section{Strong nestings of BIBDs}
\label{strong.sec}

\subsection{Block size $k=2$}

We begin by briefly discussing strong nestings of $(v,2,1)$-BIBDs.
The next lemma is essentially the same as the lower bound from \cite[Theorem 11]{PBOM}. 
We give a simple proof using the technique we used to prove Lemma \ref{L1}.

\begin{Lemma}
\label{strongk=2}
\textup{\cite[Theorem 11]{PBOM}}
\label{L2}
Suppose a $(v,2,1)$-BIBD can be strongly nested. 
Then 
$w \geq  \lceil 3v/2 \rceil$.
\end{Lemma}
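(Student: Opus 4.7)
The plan is to adapt the averaging argument of Lemma \ref{L1} to the strong-nesting setting, but count point-pair incidences in the augmented design rather than counting blocks. Let $(Y, \A')$ denote the partial $(w,3,2)$-BIBD produced by a strong nesting $\phi$ of the $(v,2,1)$-BIBD $(X, \A)$. Because each of the $|\A| = \binom{v}{2}$ augmented blocks has three points, it contributes $\binom{3}{2}=3$ pair-incidences, so there are $3\binom{v}{2}$ pair-incidences to account for in total.

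First I would partition these incidences into three classes according to whether the pair lies entirely in $X$ (count $n_{XX}$), spans $X$ and $Y\setminus X$ ($n_{XY}$), or lies entirely in $Y\setminus X$ ($n_{YY}$). Since every augmented block consists of an original pair from $X$ together with a single nested point, no block contains two points of $Y\setminus X$, so $n_{YY}=0$. The partial-BIBD hypothesis with index $\lambda+1=2$ immediately gives $n_{XX}\leq 2\binom{v}{2}$.

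The strong-nesting condition supplies the key bound on $n_{XY}$: if $y\in Y\setminus X$ is the nested point for $s_y$ augmented blocks, the pairs $\{x,y\}$ produced in those blocks must all be distinct, so the $2s_y$ points of $X$ paired with $y$ are distinct, forcing $s_y\leq\lfloor v/2\rfloor$. Summing $2 s_y$ over $y\in Y\setminus X$ then yields $n_{XY}\leq 2(w-v)\lfloor v/2\rfloor$. Combining the three bounds,
\[3\binom{v}{2}=n_{XX}+n_{XY}\leq 2\binom{v}{2}+2(w-v)\lfloor v/2\rfloor,\]
which rearranges to $\binom{v}{2}\leq 2(w-v)\lfloor v/2\rfloor$. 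Handling the two parities of $v$ separately would give $w-v\geq(v-1)/2$ when $v$ is even and $w-v\geq v/2$ when $v$ is odd; in either case, since $w$ must be an integer, $w\geq\lceil 3v/2\rceil$.

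The main obstacle will be spotting that a direct translation of Lemma~\ref{L1} is too weak here: if one merely counts blocks and bounds the number of blocks in which a single new point can appear, one obtains only $w\geq v+1$ for even $v$ (and nothing at all for odd $v$). The improvement comes from working at the level of pair-incidences, where the strong-nesting restriction applies twice per appearance of a new point, and this doubling interacts with the parity-aware floor $\lfloor v/2\rfloor$ to deliver the tight $\lceil 3v/2\rceil$ bound.
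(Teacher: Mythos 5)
Your argument is correct and is essentially the paper's proof in different bookkeeping: counting pair-incidences instead of blocks yields an inequality exactly twice the paper's, resting on the same two key observations (each new point can be the nested point of at most $\lfloor v/2\rfloor$ blocks, and at most $\frac{1}{2}\binom{v}{2}$ blocks can be nested with old points), followed by the same parity/integrality step. Incidentally, the worry in your final paragraph is unfounded: the paper's block count does incorporate the bound on blocks nested with old points and already delivers the tight $\lceil 3v/2\rceil$ bound.
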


\begin{proof}
The only change from the proof of Lemma \ref{L1} is that now every point in $Y \setminus X$ occurs in at most 
$\lfloor \frac{v}{2} \rfloor$ blocks (since each pair of the form $\{x,y\}$, where $x \in X$ and $y \in Y \setminus X$,  can occur in at most \emph{one block} in $\A'$).  

Equation (\ref{eq1}) becomes 
\[(w-v) \left\lfloor \frac{v}{2} \right\rfloor + \frac{v(v-1)}{4} \geq \frac{v(v-1)}{2},\]
or 
\[(w-v) \left\lfloor \frac{v}{2} \right\rfloor  \geq \frac{v(v-1)}{4},\]
which simplifies to give
\[w  \geq v + \frac{v(v-1)}{4\left\lfloor \frac{v}{2} \right\rfloor}.\]
When $v$ is even, we obtain 
$w  \geq (3v-1)/2.$
Since $w$ is in integer, we must have
$w  \geq 3v/2 = \lceil 3v/2 \rceil.$
For $v$ odd, the resulting inequality is
$w  \geq 3v/2.$
Since $w$ is an integer, we have
$w  \geq (3v+1)/2 = \lceil 3v/2 \rceil.$
\end{proof}

The following complete result concerning (optimal) strong nestings of $(v,2,1)$-BIBDs
was proven in by Araujo-Pardo {\it et al.} in \cite[Theorem 1.2]{AGMMRRT}.

\begin{Theorem}
\textup{\cite[Theorem 1.2]{AGMMRRT}}
For any $v\geq 2$, $v \neq 4$, there is an (optimal) strong nesting of the $(v,2,1)$-BIBD with $w = \lceil {3v}/{2} \rceil$.
\end{Theorem}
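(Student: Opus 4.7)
The lower bound $w \geq \lceil 3v/2\rceil$ is already Lemma \ref{strongk=2}, so the proof is constructive: exhibit, for each $v \neq 4$, a strong nesting of the $(v,2,1)$-BIBD achieving this bound. My plan is to split by the parity of $v$ and exploit (near-)$1$-factorizations of $K_v$, reserving the first half of the matchings for new nested points and the second half for old nested points.

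For even $v = 2t$, take a $1$-factorization $F_1,\dots,F_{2t-1}$ of $K_{2t}$, introduce $t$ new points $\infty_1,\dots,\infty_t$, and set $\phi(e) = \infty_i$ for every $e \in F_i$ with $1 \leq i \leq t$. Each $\infty_i$ then receives a perfect matching on $X$, so all $2t^2$ new--old nested pairs $\{\infty_i, x\}$ are automatically distinct (and in fact exhaust every such pair). The remaining edges form the $(t-1)$-regular graph $H = F_{t+1}\cup\cdots\cup F_{2t-1}$ of size $t(t-1)$; they must be nested with old points so as to produce $2t(t-1)$ distinct old--old nested pairs in $\binom{X}{2}$. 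For odd $v = 2t+1$, use the cyclic near-$1$-factorization $M_0,\dots,M_{2t}$ of $K_{2t+1}$ with $M_i = \{\{i+j, i-j\} : 1 \leq j \leq t\}$ in $\mathbb{Z}_{2t+1}$ (so $M_i$ misses vertex $i$); assign $\phi(e) = \infty_i$ for $e \in M_i$, $0 \leq i \leq t$, leaving a residual of $t^2$ edges to be nested with old points producing $2t^2$ distinct old--old nested pairs.

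The main obstacle in both cases is the assignment of old nested points to the residual edges subject to the distinctness requirement; the counts above show there is slack ($t$ unused old--old pairs in either parity), so the problem is structural rather than enumerative. My plan is to use a cyclic shift rule $\phi(\{a,b\}) = g(a,b)$ adapted to the ambient group ($\mathbb{Z}_{2t+1}$ for odd $v$, $\mathbb{Z}_{2t-1}\cup\{\infty\}$ for even $v$) and to verify the distinctness of the resulting pairs via a difference-multiset argument analogous to those used for starters in resolvable designs. The exceptional case $v = 4$ requires a separate exhaustive analysis: with $t = 2$ the residual consists of a single perfect matching of two edges on four vertices, and a direct check shows that none of the possible assignments of nested points yields distinct old--old nested pairs, forcing $w \geq 7$ as realized by Example \ref{E4strong}. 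The remaining small cases $v \in \{2, 3, 5, 6\}$ are settled by explicit construction.
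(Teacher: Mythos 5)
The paper does not actually prove this theorem: it is quoted from \cite[Theorem 1.2]{AGMMRRT}, and the only constructions the paper itself supplies afterwards are the Vizing-type modifications of its weak nestings, which it shows to be optimal only for $v \equiv 1 \bmod 4$. Judged as a standalone argument, your proposal has a genuine gap, because the entire difficulty of the theorem is concentrated in the step you defer. Splitting a (near-)$1$-factorization of $K_v$ and sending the first $\lceil v/2\rceil$ factors to new points is the easy half: each new point receives a (near-)perfect matching, so the new--old pairs are automatically distinct, and your counts correctly show each new point meets the bound of Lemma \ref{strongk=2} with equality. The hard half is assigning \emph{old} nested points to the residual $t(t-1)$ (for $v=2t$) or $t^2$ (for $v=2t+1$) edges so that the $2t(t-1)$, resp.\ $2t^2$, resulting old--old pairs are pairwise distinct in $\binom{X}{2}$. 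You only announce a ``plan'' to do this with an unspecified rule $\phi(\{a,b\})=g(a,b)$, to be verified by an unspecified difference-multiset argument. That this step cannot be waved through is exactly what the exception $v=4$ demonstrates: there the residual assignment is impossible even though the counting has slack, so any correct proof must exhibit a concrete $g$ and verify it for every $v\neq 4$. That verification is where the real content of \cite{AGMMRRT} lies, and it is absent here.

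A secondary error: your treatment of $v=4$ invokes Example \ref{E4strong}, but that example is a strong nesting of the $(4,3,2)$-BIBD (the four triples on four points), not of the $(4,2,1)$-BIBD (the six edges of $K_4$), which is the design the theorem excludes. The nonexistence of a strong nesting of $K_4$ with $w=6$ and the determination of its true optimum require their own separate argument. The counting framework you set up is sound and consistent with Lemma \ref{strongk=2}, but it yields only necessary conditions; the existence claim remains unproved.
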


Here we just show how the optimal weak nestings of $(v,2,1)$-BIBDs constructed in Section \ref{weak.sec} can be modified to yield strong nestings of $(v,2,1)$-BIBDs. When $v \equiv 1 \bmod 4$,  the resulting strong nestings turn out to be optimal. 
First, we can easily modify the nestings in Examples \ref{E1},  \ref{E2} and  \ref{E3} to be strong nestings by increasing $w$ slightly. Here, the resulting values of $w$ are optimal (they meet the bound of Lemma \ref{L2}).

In Example \ref{E1}, the edges of the $K_{5}$ that are joined to $\infty$ form a cycle of odd length. This can be properly edge coloured using three colours. It follows that there is a strong nesting of the $(5,2,1)$-BIBD with $w = 5+3 = 8$. From Lemma \ref{L2}, this is optimal. 

In Example \ref{E2},   the edges of the $K_{9}$ that are joined to  $\infty_1$ or $\infty_2$ form a four-regular graph. By Vizing's Theorem, this subgraph can be properly edge coloured using at most five colours. It follows that there is a strong nesting of the $(9,2,1)$-BIBD with $w \leq 9 + 5 = 14$. From Lemma \ref{L2}, this is optimal.

In Example \ref{E3}, the edges of the $K_{13}$ that are joined to $\infty_1$, $\infty_2$ or $\infty_3$ form a six-regular graph. By Vizing's Theorem, this subgraph can be properly edge coloured using at most seven colours. It follows that there is a strong nesting of the $(13,2,1)$-BIBD with $w \leq 13 + 7 = 20$. From Lemma \ref{L2}, this is optimal. 

Using a similar technique, we have the following general result for $v \equiv 1 \bmod 4$.

\begin{Theorem}
For any $v = 4t+1$, there is an (optimal) strong nesting of the $(v,2,1)$-BIBD with $w = 6t+2$.
\end{Theorem}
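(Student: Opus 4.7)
The plan is to recycle the optimal weak nesting from Lemma~\ref{T1mod4} and then apply Vizing's theorem, exactly as in the preceding small cases $v=5,9,13$. Write $v = 4t+1$. From Lemma~\ref{T1mod4}, we already have a weak nesting in which the $t$ new points $\infty_1,\dots,\infty_t$ serve as nested points of exactly those blocks developed from $(0,2j,\infty_j)$ for $j=1,\dots,t$, while the blocks developed from $(0,2j+1,2t-2j)$ are nested by points of $X=\zed_v$. The latter assignments will be left untouched; only the $\infty_j$'s need to be redistributed.

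First I would form the subgraph $H$ of $K_v$ consisting of all edges $\{a,b\}$ whose block is nested by some $\infty_j$ in the weak nesting, i.e., the edges $\{i,i+2j\}$ with $i\in\zed_v$ and $j\in\{1,\dots,t\}$. A short check confirms that $H$ is simple (the differences $\pm 2,\pm 4,\dots,\pm 2t$ are pairwise distinct modulo $v$, since $v$ is odd and the positive representatives are even while the negative ones are odd) and that every vertex of $\zed_v$ meets exactly $2t$ edges of $H$ (two per value of $j$). Hence $H$ is a simple $2t$-regular graph on $v=4t+1$ vertices.

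Next I would invoke Vizing's theorem to get $\chi'(H) \leq \Delta(H)+1 = 2t+1$, producing matchings $M_1,\dots,M_{2t+1}$ that partition $E(H)$. Introduce $2t+1$ fresh points $\infty'_1,\dots,\infty'_{2t+1}$ (which replace $\infty_1,\dots,\infty_t$) and redefine $\phi(A)=\infty'_k$ whenever the block $A$ corresponds to an edge of $M_k$; the old-point-nested blocks keep their assignments from Lemma~\ref{T1mod4}.

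The verification is then routine. Because each $M_k$ is a matching, any two blocks nested by $\infty'_k$ are vertex-disjoint in $X$, so every pair $\{x,\infty'_k\}$ occurs at most once in the augmented design. Pairs inside $X$ are unaffected by the relabelling and still occur at most twice, as in the original weak nesting. The condition $\phi(A)\cap A=\emptyset$ is automatic for new-point-nested blocks and is inherited from Lemma~\ref{T1mod4} for the others (since $2t-2j\notin\{0,2j+1\}$). The construction therefore yields a strong nesting with $w = v + (2t+1) = 6t+2$, which matches the lower bound $\lceil 3v/2 \rceil = 6t+2$ from Lemma~\ref{L2}, so it is optimal. The only substantive step is confirming that $H$ is simple and $2t$-regular; everything else is a direct invocation of Vizing together with elementary counting.
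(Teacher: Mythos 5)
Your proposal is correct and follows essentially the same route as the paper: start from the base blocks of Lemma~\ref{T1mod4}, observe that the edges nested by the $\infty_j$'s form a simple $2t$-regular subgraph of $K_v$, edge-colour it with $2t+1$ colours via Vizing's theorem, assign one fresh nested point per colour class, and invoke Lemma~\ref{L2} for optimality. Your additional checks (simplicity of $H$, the untouched old-point assignments, and the disjointness condition) are details the paper leaves implicit, and they are all verified correctly.
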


\begin{proof}
Start with the same base blocks as in Theorem \ref{T1mod4}. The edges of the $K_v$ that are joined to an $\infty_j$ (for $1 \leq j \leq t$) form a $2t$-regular graph. By Vizing's Theorem, this subgraph can be properly edge coloured using at most $2t+1$ colours. It follows that there is a strong nesting of the $(v,2,1)$-BIBD with $w \leq 4t+1 + (2t+1) = 6t+2$. From Lemma \ref{L2}, this is optimal. 
\end{proof}

The above-described technique based on Vizing's Theorem can also be applied to the optimal weak nestings presented in Lemmas 
\ref{T3mod4}--\ref{T2mod4}, but the resulting strong nestings are not optimal.

\subsection{Block size $k>2$}
\label{k>2.sec}

\medskip
 We now turn our attention to strong nestings of  $(v,k,\lambda)$-BIBDs with $k > 2$.
It is straightforward to modify the proof of Theorem \ref{genweak.thm} to apply to the situation of strong nestings. In doing so, we obtain an alternate proof of the bound proven by Buratti, Merola, Nakic and Rubio-Montiel in 
\cite[Theorem 6.1]{BMNR}. 

\begin{Theorem}
\label{gen.thm}
\cite[Theorem 6.1]{BMNR}
Suppose 
a $(v,k,\lambda)$-BIBD can be strongly nested into a partial $(w,k+1,\lambda +1)$-BIBD. Then 
\begin{equation}
\label{final.eq}
w \geq r + \frac{v+1}{2}.
\end{equation}
\end{Theorem}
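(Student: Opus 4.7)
The plan is to adapt the proof of Theorem \ref{genweak.thm} essentially verbatim, changing only the bound on how many blocks each new point can lie in. As before, let $(Y,\A')$ be the augmented design, with $X\subseteq Y$, $|Y|=w$, and split $\A$ into the set $\TTT=\{A\in\A:\phi(A)\in X\}$ of blocks nested by an old point and its complement $\A\setminus\TTT$ of blocks nested by a new point. The total block count $b=\lambda v(v-1)/(k(k-1))$ is still partitioned this way, and I would again bound each piece, then combine.

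For the bound on $|\TTT|$ nothing changes: each $A\in\TTT$ contributes $k$ new pairs of old points to the augmented design, and the augmented design can accommodate at most $\binom{v}{2}$ such new pairs (since pairs in $X$ already appear $\lambda$ times in $\A$ and at most $\lambda+1$ times in $\A'$), giving $|\TTT|\leq \binom{v}{2}/k$. The key modification is the bound on the degree of a new point $y\in Y\setminus X$: under the \emph{strong} nesting hypothesis, every pair $\{x,y\}$ with $y=\phi(A)$ and $x\in A$ is distinct, so the $k\cdot n_y$ pairs generated by the $n_y$ blocks containing $y$ are all distinct pairs in $X\times\{y\}$. Hence $k\cdot n_y\leq v$, i.e.\ $n_y\leq \lfloor v/k\rfloor$, in place of the weaker $\lfloor(\lambda+1)v/k\rfloor$ used in the weak case.

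Combining these two bounds gives
\[
(w-v)\left\lfloor\frac{v}{k}\right\rfloor+\frac{v(v-1)}{2k}\;\geq\;\frac{\lambda v(v-1)}{k(k-1)},
\]
and using $\lfloor v/k\rfloor\leq v/k$ followed by multiplication by $k/v$ reduces this to
\[
(w-v)+\frac{v-1}{2}\;\geq\;\frac{\lambda(v-1)}{k-1}=r,
\]
which rearranges to $w\geq r+(v+1)/2$, as required.

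There is no real obstacle; the only conceptual point worth emphasizing is the new-point degree bound, which is where the strong hypothesis enters. Everything else is a direct transcription of the weak-case argument, with $\lambda+1$ replaced by $1$ in the numerator of the per-point block bound.
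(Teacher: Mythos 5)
Your proof is correct and follows essentially the same route as the paper's: the same split of $\A$ into $\TTT$ and its complement, the same bound $|\TTT|\leq \binom{v}{2}/k$, and the same sharpened degree bound $\lfloor v/k\rfloor$ for new points coming from the distinctness of pairs in a strong nesting, followed by the identical algebraic simplification. No gaps.
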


\begin{proof}
As before, let $(Y, \A')$ be the partial $(w,k+1,\lambda+1)$-BIBD obtained from a strong nesting of a $(v,k,\lambda)$-BIBD $(X, \A)$ using the mapping $\phi$.
We observe that the following properties hold:
\begin{enumerate}
\item Every point in $Y \setminus X$ occurs in at most 
$\left\lfloor \frac{v}{k} \right\rfloor$ blocks (this is different from the corresponding statement in the proof of
Theorem \ref{genweak.thm}).
\item 
Let $\TTT =  \{A \in\A : \phi(A) \in X \} $. Then $|\TTT| \leq \frac{1}{k}\binom{v}{2}.$
\item 
Therefore the  number of blocks is at most 
\[ (w-v) \left\lfloor \frac{v}{k} \right\rfloor + |\TTT| \leq (w-v) \left\lfloor \frac{v}{k} \right\rfloor + \frac{1}{k}\binom{v}{2}.\] 
\item Since $(X,\A)$ is a BIBD,  the 
exact number of blocks is  $\lambda v(v-1)/(k(k-1))$, so 
\begin{equation}
\label{eq2} (w-v) \left\lfloor \frac{v}{k} \right\rfloor  + \frac{1}{k}\binom{v}{2} \geq \frac{\lambda v(v-1)}{k(k-1)}.\end{equation}
Since $\lfloor \frac{v}{k} \rfloor \leq \frac{v}{k}$, we have
\begin{equation}
\label{eq3} (w-v)  \frac{v}{k}   + \frac{1}{k}\binom{v}{2} \geq \frac{\lambda v(v-1)}{k(k-1)}.
\end{equation}
\item Simplifying (\ref{eq3}), we have
\begin{align*} (w-v)    + \frac{v-1}{2} &\geq \frac{\lambda (v-1)}{k-1}\\
w &\geq  v   + (v-1) \left( \frac{\lambda}{k-1} - \frac{1}{2} \right)\\
w & \geq r + \frac{v+1}{2},
\end{align*}
since $r = \lambda (v-1)/(k-1)$.
\end{enumerate}
\end{proof}

In the case $v = 2$, $\lambda = 1$, the bound of Theorem \ref{gen.thm} is the same as Theorem \ref{strongk=2}.
We should also note that the bound of Theorem \ref{gen.thm}  agrees with the bound for $k =2$ and general $\lambda$ that is proven in \cite[Theorem 1.4]{AGMMRRT}.


Now let's consider $(v,3,2)$-BIBDs. Here $r = v-1$, and so Theorem \ref{gen.thm} tells us that
\begin{equation}
\label{3strongbound}
w \geq v-1 + \frac{v+1}{2} = \frac{3v-1}{2}.
\end{equation}
Since $w$ is an integer, we can strengthen (\ref{3strongbound}) as follows:
\begin{equation}
\label{3strongbound-int}
w \geq \left\lceil \frac{3v-1}{2}\right\rceil.
\end{equation}

\subsection{$v \equiv 3 \bmod 6$}

We show how to get optimal (or close to optimal) strong nestings by modifying some of the constructions used to obtain weak nestings of $(v,3,2)$-BIBDs  in Section \ref{weak.sec}. We first look at the case $v \equiv 3 \bmod 6$.
We begin by showing that there are no strong nestings of $(v,3,2)$-BIBDs in which  $w = (3v-1)/2$.

\begin{Lemma}
\label{3mod6bound}
Suppose  $v\equiv 3 \bmod 6$. If a $(v,3,2)$-BIBD  has a strong nesting, then $w \geq (3v+1)/2$.
\end{Lemma}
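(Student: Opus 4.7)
The plan is to start from the general lower bound \eqref{3strongbound} and rule out equality when $v \equiv 3 \bmod 6$. Since $v$ is odd, $(3v-1)/2$ is already an integer, so the integer strengthening in \eqref{3strongbound-int} does not immediately help; instead I will analyze the equality case and derive a divisibility contradiction.

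First, I would revisit the proof of Theorem \ref{gen.thm} and record exactly what equality $w = r + (v+1)/2 = (3v-1)/2$ forces for $k=3$, $\lambda=2$. Since $3 \mid v$, the floor becomes an equality, and to have $w$ meet the bound we need both of the following to be tight:
\begin{enumerate}
\item Every new point $y \in Y \setminus X$ lies in exactly $v/3$ blocks, and these blocks partition $X$. Equivalently, every old--new pair $\{x,y\}$ occurs in exactly one augmented block.
\item $|\TTT| = v(v-1)/6$, so that the $3|\TTT| = \binom{v}{2}$ augmenting pairs among old points are all distinct. Hence every pair of old points is an augmenting pair exactly once, and (combined with the original multiplicity $\lambda = 2$) every old--old pair lies in exactly $3$ augmented blocks.
\end{enumerate}
A short counting check also forces $m := w-v = (v-1)/2$ new points, each associated with its own parallel class of $v/3$ triples of old points.

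The key step is then a replication-number count at a single old point. Fix $x \in X$ and count pair-incidences at $x$ in the augmented design in two ways. On the one hand, $x$ pairs with $v-1$ other old points, each pair lying in exactly $3$ blocks, and with $(v-1)/2$ new points, each pair lying in exactly $1$ block; this totals
\[
3(v-1) + \frac{v-1}{2} = \frac{7(v-1)}{2}
\]
pair-incidences at $x$. On the other hand, each augmented block through $x$ contains $3$ other points and therefore contributes exactly $3$ pair-incidences at $x$. Consequently the number of blocks through $x$ must equal $7(v-1)/6$.

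The contradiction now comes from divisibility: for this replication number to be an integer we need $6 \mid 7(v-1)$, i.e.\ $v \equiv 1 \bmod 6$. When $v \equiv 3 \bmod 6$ we have $v-1 \equiv 2 \bmod 6$, so $7(v-1) \equiv 2 \bmod 6$, which is not divisible by $6$. Hence $w = (3v-1)/2$ is impossible, and combined with \eqref{3strongbound} this yields $w \geq (3v+1)/2$, as claimed.

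The main obstacle I anticipate is the first step---convincing oneself that equality in Theorem \ref{gen.thm} is rigid enough to force \emph{every} old--old pair to have multiplicity exactly $3$ and \emph{every} old--new pair to have multiplicity exactly $1$ in the augmented design. Once that rigidity is established, the divisibility obstruction is immediate; the bulk of the work is verifying that the equality conditions in the proof of Theorem \ref{gen.thm} can be read off cleanly for the $(v,3,2)$ case and for $v \equiv 3 \bmod 6$.
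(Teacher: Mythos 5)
Your proposal is correct and follows essentially the same route as the paper: assume equality $w=(3v-1)/2$, extract the rigidity consequences (each new point attached to a parallel class, each old--old pair augmented exactly once), and derive a divisibility contradiction at a single old point. The paper phrases the final step as the per-point nesting count $r=(v-1)/6$ failing to be an integer, while you phrase it as the augmented replication number $7(v-1)/6$ failing to be an integer; these are the same contradiction.
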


\begin{proof}
The inequality (\ref{3strongbound}) states that $w \geq  (3v-1)/2$. 
If we have $w = (3v-1)/2$, then, in the proof of Theorem \ref{gen.thm}, all inequalities must in fact be equalities. This  will lead to a contradiction. 

Define $\TTT =  \{A \in \A: \phi(A) \in X \} $. Then $|\TTT| = v(v-1)/6$. 
Hence there are $v(v-1)/6$ blocks nested with old points and $v(v-1)/6$ blocks nested with new points.
Each of the $(v-1)/2$ new points must be attached to a parallel class of $v/3$ blocks. Hence, the blocks not in $\TTT$ contain every old point exactly $(v-1)/2$ times. Therefore, every old point occurs exactly $(v-1)/2$ times in a block in $\TTT$ in the $(v,3,2)$-BIBD.

Let's consider the multiset of pairs $P = \{ \{ x, \phi(A)\} : a \in A \in T\}$. This multiset must contain every pair of old points exactly once, so it is a set. Suppose an (old) point occurs $r$ times as a nested  point for a block in $\TTT$. The number of pairs in $P$ containing $x$ is $3r + (v-1)/2$. But there are $v-1$ pairs in $P$ that contain $x$, so 
$r = (v-1)/6$. However, this is not an integer, so we have a contradiction.
\end{proof}

\begin{Example}
\label{E9strong}
{\rm We construct a strongly nested $(9,3,2)$-BIBD with $w = 14$.
We take two copies of an AG$(2,3)$ defined on points $1, \dots , 9$. 
The five new points are $\infty_1, \dots , \infty_5$.

We have a total of eight parallel classes, each consisting of three disjoint blocks.
We can assume that three of the parallel classes are $123$, $456$, $789$; $159$, $267$, $348$; and $168$, $249$, $357$.
We nest these nine blocks as follows:
\[
\begin{array}{l@{\hspace{.5in}}l@{\hspace{.5in}}l}
(1,2,3,4)  & (4,5,6,7) & (7,8,9,1) \\ 
(1,5,9,2)  & (2,6,7,8) & (3,4,8,5) \\
(1,6,8,3)  & (2,4,9,6) & (3,5,7,9)
\end{array}
\]
There are five additional parallel classes, which we name $P_1, \dots , P_5$. We adjoin $\infty_i$ to all the blocks in $P_i$, for $1 \leq i \leq 5$. 
}
\end{Example}

\begin{Theorem}
\label{strong3}
Suppose $v \equiv 3 \bmod 6$, $v \geq 9$. Then there exists a $(v,3,2)$-BIBD that has an (optimal) strong nesting with $w = (3v+1)/2$. 
\end{Theorem}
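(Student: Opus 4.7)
The plan is to sharpen the ``GDD + resolution'' construction of Theorem \ref{3mod6} by assigning one distinct new point per parallel class, since strongness forbids reusing an $\infty$ across two blocks sharing an old point. The case $v=9$ is already handled by Example \ref{E9strong}, so I would assume $v=6t+3$ with $t\geq 2$.

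First I would invoke the Lindner--Rodger construction to obtain a perfectly nested $3$-GDD of type $3^{2t+1}$, say $(X,\G,\A_0)$ with nesting $\phi_0\colon\A_0\to X$, then take a KTS$(v)$ on the same point set $X$ whose first parallel class equals $\G$ (after a suitable relabelling of points). Deleting that parallel class yields a resolvable $3$-GDD $(X,\G,\A_1)$ with $3t$ parallel classes $P_1,\dots,P_{3t}$. I would then introduce $3t+2 = (v+1)/2$ new points $\infty_1,\dots,\infty_{3t+2}$ and define the nesting in three pieces: (i) each $A\in\A_0$ is nested by $\phi_0(A)\in X$; (ii) every block of $P_i$ is nested by $\infty_i$, for $1\leq i\leq 3t$; and (iii) for each group $G\in\G$, the two copies of $G$ in the $(v,3,2)$-BIBD are nested by $\infty_{3t+1}$ and $\infty_{3t+2}$ respectively. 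This yields $w = v + (v+1)/2 = (3v+1)/2$, matching the lower bound from Lemma \ref{3mod6bound}.

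The verification is routine pair-counting. An off-group pair $\{x,y\}$ appears once as a base pair of $\A_0$, once as a base pair of $\A_1$, and exactly once as a nested pair of $\A_0$ (because the augmented $\A_0$ is a $(4,2)$-GDD of type $3^{2t+1}$), for total multiplicity $3$; a within-group pair appears only in the two group augmentations, giving multiplicity $2$. For the strong condition, each pair $\{g,\infty_i\}$ with $i\leq 3t$ occurs exactly once because $P_i$ partitions $X$, each pair $\{g,\infty_{3t+k}\}$ with $k\in\{1,2\}$ occurs exactly once because $\G$ partitions $X$, and the nested pairs among old points from $\phi_0$ are all distinct by the perfect-nesting property of $\A_0$.

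The only mildly delicate step -- and essentially the reason the construction is tight -- is the new-point accounting: the $3t$ parallel classes of $\A_1$ force $3t$ dedicated $\infty$'s, and all $2(2t+1)$ group blocks are absorbed by just \emph{two} further new points, because the pairwise disjointness of the groups allows one new point to nest one copy of every group block simultaneously while each pair $\{g,\infty\}$ appears only once. This exact match with the $(v+1)/2$ budget dictated by Lemma \ref{3mod6bound} is the crux of the argument.
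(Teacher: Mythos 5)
Your proposal is correct and follows essentially the same route as the paper: adapt the construction of Theorem \ref{3mod6} by giving each of the $3t$ parallel classes of the resolvable $3$-GDD of type $3^{2t+1}$ its own new point and nesting the two copies of the groups with two further new points, for $w = v + 3t + 2 = (3v+1)/2$, which matches the lower bound of Lemma \ref{3mod6bound}. The extra pair-counting and strongness verification you supply is accurate and fills in details the paper leaves as ``clear.''
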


\begin{proof}
Let $v = 6t+3$. The case $t=1$ is handled in Example \ref{E9strong}, so we can assume $t \geq 2$.

We adapt the construction given in the proof of Theorem \ref{3mod6}. Recall in the course of that proof that we constructed a resolvable $3$-GDD of type $3^{2t+1}$ that contained $3t$ parallel classes of blocks. We now associate a different new point with each of these parallel classes, and nest each block in one of these parallel classes with the relevant point. 
Then take two copies of each group, nest one with $\infty$ and nest the other with $\infty'$, where $\infty, \infty'$ are two additional new points. It is clear that we get a strong nesting of the $(v,3,2)$-BIBD with 
$w = 6t+3 + 3t + 2 = 9t+5$. This is optimal, in view of Lemma \ref{3mod6bound}.
\end{proof}

\subsection{$v \equiv 1 \bmod 6$}

In the case $v\equiv 1 \bmod 6$, we can construct optimal strong nestings for almost all values of $v$. 

\begin{Lemma}
\label{1mod6bound}
Suppose  $v\equiv 1 \bmod 6$. If a $(v,3,2)$-BIBD  has a strong nesting, then $w \geq (3v+1)/2$.
\end{Lemma}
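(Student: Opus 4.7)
The plan is to sharpen the proof of Theorem \ref{gen.thm} by exploiting the fact that the floor bound $\lfloor v/k \rfloor \leq v/k$ used in passing from inequality (\ref{eq2}) to inequality (\ref{eq3}) is strict when $v = 6s+1$ and $k = 3$. This is precisely the loss that makes (\ref{3strongbound}) non-tight in this residue class, and recovering it plus using integrality of $w$ should give the extra unit we need.

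First, I would specialize the setup of Theorem \ref{gen.thm} to $(v,3,2)$-BIBDs with $v = 6s+1$. In this case $\lfloor v/3 \rfloor = 2s$, the total number of blocks is $v(v-1)/3 = 2s(6s+1)$, and $\binom{v}{2}/3 = s(6s+1)$. The inequality (\ref{eq2}), which counts block incidences split between new and old nested points, then reads
\[
(w-v)(2s) + s(6s+1) \;\geq\; 2s(6s+1),
\]
so $(w-v)(2s) \geq s(6s+1)$, i.e., $w - v \geq (6s+1)/2 = v/2$. This yields $w \geq 3v/2 = (18s+3)/2$.

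Finally, since $v$ is odd the bound $3v/2$ is not an integer, and because $w$ must be a positive integer we conclude $w \geq \lceil 3v/2 \rceil = (3v+1)/2$, as claimed. The only real subtlety is to make sure that the justification of item 1 in the proof of Theorem \ref{gen.thm} still applies: each new point $y \in Y \setminus X$ lies in blocks that each meet $X$ in exactly $k = 3$ points (since two new points cannot appear together in a strongly nested augmented block), and strongness forces the pairs $\{y,x\}$ with $x \in X$ to be distinct, so $y$ lies in at most $\lfloor v/3 \rfloor = 2s$ blocks. No further combinatorial structure is needed beyond this counting argument, so I do not anticipate a genuine obstacle; the proof is essentially a one-line integrality refinement of Theorem \ref{gen.thm}, directly parallel to Lemma \ref{3mod6bound} but simpler, since the contradiction comes from the parity of $3v$ rather than from analyzing the pair multiset $P$.
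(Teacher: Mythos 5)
Your proposal is correct and follows essentially the same route as the paper's proof: both use the exact value $\lfloor v/3 \rfloor = (v-1)/3$ in inequality (\ref{eq2}) to obtain $w \geq 3v/2$, and then invoke the integrality of $w$ together with the oddness of $v$ to conclude $w \geq (3v+1)/2$. No gaps.
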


\begin{proof}
The inequality (\ref{3strongbound-int}) states that $w \geq  (3v-1)/2$. 
However, we can prove a stronger result. 
When $v \equiv 1 \bmod 3$, we have
$\lfloor \frac{v}{3} \rfloor = (v-1)/3$ and the inequality (\ref{eq2}) becomes
\begin{equation*}
 (w-v) \left( \frac{v-1}{3} \right)  + \frac{1}{3}\binom{v}{2} \geq \frac{2 v(v-1)}{6}.\end{equation*}
This simplifies to give $w \geq 3v/2$. However, $3v/2$ is not an integer when $v \equiv 1 \bmod 6$, so it must be the case that $w \geq (3v+1)/2$. 
\end{proof}

\begin{Example}
\label{E7strong}
{\rm We construct an optimal  strongly nested $(7,3,2)$-BIBD with $w = 11$. This is optimal, in view of Lemma
\ref{1mod6bound}. 
The BIBD is defined on points
$\mathbb{Z}_6 \cup \{ \infty\}$. The four new points are $\{A,B,C,D\}$.
The blocks in the augmented design are as follows:
\[
\begin{array}{l@{\hspace{.5in}}l@{\hspace{.5in}}l}
(\infty,0,3,1) \bmod 6 & (0,2,4,D)  & (1,3,5, D) \\ 
(0,1,2,A)  & (1,2,3,B) & (2,3,4,C) \\
(3,4,5,A)  & (4,5,0,B) & (5,0,1,C)
\end{array}
\]
}
\end{Example}

\begin{Theorem}
\label{strong1}
Suppose $v \equiv 1 \bmod 6$, $v \geq 19$. Then there exists a $(v,3,2)$-BIBD that has an (optimal) strong nesting with 
$w = (3v+1)/2$. 
\end{Theorem}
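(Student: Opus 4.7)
The plan is to adapt the weak-nesting construction of Theorem~\ref{1mod6} to the strong setting, in the same way that the proof of Theorem~\ref{strong3} strengthened Theorem~\ref{3mod6} (by assigning one new point per parallel class instead of per cyclic orbit). Write $v=6t+1$ with $t\geq 3$. The proposed $(v,3,2)$-BIBD is built as the block-disjoint union $\A=\A_1\cup\A_2$ of two STS$(v)$s on a common point set $X$, nested in complementary ways. Take $(X,\A_1,\phi_1)$ to be a perfectly nested STS$(v)$, which exists by \cite{St85}. Since the augmented design of a perfectly nested STS$(v)$ is a $(v,4,2)$-BIBD, the nested pairs $\{x,\phi_1(A)\}$ with $x\in A\in\A_1$ run over every old--old pair of $X$ exactly once. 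In particular, every block of $\A_2$ is then forced to be nested with a \emph{new} point, since otherwise some nested pair would repeat one of $\phi_1$'s.

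For $\A_2$, I would choose an STS$(v)$ whose block set partitions into $3t+1$ partial parallel classes $P_1,\dots,P_{3t+1}$ of pairwise disjoint triples, each of size at most $2t=(v-1)/3$. Since $|\A_2|=6t^2+t$, a convenient size profile is $3t$ near-parallel classes of size $2t$ together with one further class of size $t$. Introducing new points $\infty_1,\dots,\infty_{3t+1}$, set $\phi_2(B)=\infty_i$ for every $B\in P_i$ and define $\phi=\phi_1\cup\phi_2$. The verification that $\phi$ is strong is routine: the $\A_1$-nested pairs are old--old and pairwise distinct, the $\A_2$-nested pairs $\{\infty_i,x\}$ are distinct within each $P_i$ by disjointness of the triples and across classes because the first coordinate varies, and the two families are disjoint since one has both coordinates old. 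This gives $w=v+(3t+1)=(3v+1)/2$, matching the lower bound of Lemma~\ref{1mod6bound}.

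The principal obstacle is producing an STS$(v)$ admitting the required parallel-class decomposition. My plan is to derive it from a KTS$(v+2)=$KTS$(6t+3)$, which already has exactly $3t+1$ parallel classes of $2t+1$ triples. Deleting two points $\infty_1,\infty_2$ that lie in a common triple of the KTS turns the parallel class containing that triple into a partial parallel class of $2t$ triples on $v$ points, while in each of the remaining $3t$ parallel classes the two triples through $\infty_1$ and $\infty_2$ collapse to edges. These $6t$ broken edges form a $2$-regular graph on the $v-1$ points other than the third point of the deleted triple, and the repair consists in arranging this $2$-regular graph to decompose into $t$ triangles, which then constitute the extra class of size~$t$. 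Verifying that a suitable KTS and deletion pair can be chosen uniformly for $t\geq 3$, and handling by separate constructions the small values of $v$ where this breaks down, is the delicate step of the proof; this is presumably the source of the ``almost all'' caveat highlighted in the section's overview.
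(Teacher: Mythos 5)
Your overall strategy is exactly the paper's: take the block-disjoint union of a perfectly nested STS$(v)$ (whose nested pairs cover every old--old pair exactly once) with a second STS$(v)$ whose blocks are partitioned into $(v+1)/2=3t+1$ classes of pairwise disjoint triples, and attach one new point to each class. The object you need for the second ingredient --- an STS$(v)$ splitting into $3t$ near-parallel classes of $(v-1)/3=2t$ triples plus one partial class of $t$ triples --- is precisely a \emph{Hanani triple system}, and the paper obtains it simply by citing \cite{HATS}, where these are shown to exist exactly for $v\equiv 1 \bmod 6$, $v\geq 19$; this nonexistence for $v=7,13$, not the delicacy of any KTS surgery, is the source of the hypothesis $v\geq 19$.

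Your attempt to manufacture this system by deleting two collinear points from a KTS$(6t+3)$ has genuine gaps. First, the $6t$ broken edges form a $2$-regular graph on the $6t$ points other than $z$ (the third point of the deleted triple), so even if it decomposes into triangles there are $2t$ of them, not $t$; with only $t$ triangles half the uncovered pairs would remain uncovered and the result would not be an STS. Second, the class profile the surgery actually produces is not the one you claim: the parallel class containing the deleted triple becomes one class of size $2t$, each of the other $3t$ parallel classes loses \emph{two} triples and becomes a class of size $2t-1$, and the $2t$ repair triangles are pairwise vertex-disjoint and hence form yet another class of size $2t$. That is $3t+2$ classes, giving $w=v+3t+2=(3v+3)/2$, one more than the bound of Lemma \ref{1mod6bound}; to recover optimality you would need to absorb each repair triangle into the $4$-point hole of some deficient class, a strong extra condition you have not arranged. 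Third, nothing guarantees that a KTS and a deletion pair exist for which the broken-edge graph is a disjoint union of triangles; generically it is a union of longer cycles. All of these difficulties disappear if the second STS is taken to be a Hanani triple system from the known existence theorem, which is what the paper does.
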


\begin{proof} We adapt the proof of Theorem \ref{1mod6}, replacing the cyclic STS$(v)$ by  a Hanani triple system of order $v$, which exists from \cite{HATS}. A  Hanani triple system of order $v$ is an STS$(v)$ in which the blocks can be partitioned into $(v-1)/2$ near parallel classes (each consisting of $(v-1)/6$ disjoint blocks) and one partial parallel class consisting of $(v-1)/6$ blocks. We associate a new point with each of these $(v+1)/2$ partial parallel classes and proceed as before. This yields $(v,3,2)$-BIBD that has a strong nesting with 
$w = (3v+1)/2$. This is optimal from Lemma \ref{1mod6bound}.
\end{proof}

\subsection{$v \equiv 0 \bmod 6$}

We now turn to the case $v \equiv 0 \bmod 6$. Our first general construction yields strongly nested $(v,3,2)$-BIBDs in which $w$ is two greater than the lower bound. However, we will show a bit later that, for most values of $v \equiv 0 \bmod 12$, we can construct $(v,3,2)$-BIBDs with optimal strong nestings.

We begin by looking at the cases $v=6$ and $v=12$.

\begin{Example}
\label{strongE6}
{\rm We construct an optimal strongly nested $(6,3,2)$-BIBD with $w = 11$.
The points in the BIBD are $X = \zed_5 \cup \{\infty\}$. We introduce five new points, 
namely $\infty_i$, $i = 0,1,\dots , 4$.
To obtain the set of ten blocks, say $\A$, develop the following two base blocks modulo $5$ 
(the last point in each augmented block is the nested point):
$(0,1,3,\infty_0)$ (where the subscripts are also developed modulo $5$) and $(\infty,0,1,2)$.
The augmented BIBD has $w = 11$ points. 

The lower bound given by 
(\ref{3strongbound-int}) is $w \geq \lceil 17/2\rceil = 9$. 
Nevertheless, we can prove that $w = 11$ is optimal.  It is known that there is a unique $(6,3,2)$-BIBD up to isomorphism, and it is easily verified that any two blocks in this BIBD intersect. 

Let $\phi : \A \rightarrow Y$ be a nesting of this BIBD.
From the proof of Theorem \ref{gen.thm}, we have
 \[ | \{A : \phi(A) \in X \} | \leq \frac{1}{3}\binom{6}{2} = 5.\]
Any new point can be the nested point of at most one block, because there do not exist any pairs of disjoint blocks in the BIBD. Hence $|Y \setminus X| \geq 10 - 5 = 5$.  Therefore $w \geq 6 + 5 = 11$.
}
\end{Example}

\begin{Example}
\label{E12strong}
{\rm We construct a strongly nested $(12,3,2)$-BIBD with $w = 18$. The $(12,3,2)$-BIBD is constructed on points $1, \dots ,12$;  the new points are $13, \dots, 18$. Each new point is attached to a parallel class of blocks. The nesting of the remaining 20 blocks was found by computer. 
\[
\begin{array}{l@{\hspace{.5in}}l@{\hspace{.5in}}l@{\hspace{.5in}}l}
(1, 2, 3, 4) & (1, 2, 4, 5) & (1, 3, 4, 8) & ( 2, 3, 4, 6)\\
(5, 6, 7, 11) & (5, 6, 8, 10) & ( 5, 7, 8, 6) & ( 6, 7, 8, 12)\\
( 9, 10, 11, 3) & (9, 10, 12, 4) & (9, 11, 12, 10) & (10, 11, 12, 2) \\
( 1, 5, 9, 12) & (2, 6, 10, 1) & (3, 7, 11, 1) & ( 4, 8, 12, 11) \\
( 1, 6, 11, 9) & (2, 7, 12, 3) & ( 3, 8, 9, 5) & ( 4, 5, 10, 7) \\
( 1, 6, 12, 13) & (2, 5, 11, 13) & ( 3, 8, 10, 13) & ( 4, 7, 9, 13) \\
(1, 7, 10, 14) & (2, 8, 9, 14) & (3, 5, 12, 14) & ( 4, 6, 11, 14)\\
( 1, 8, 11, 15) & ( 2, 7, 12, 15) & (3, 6, 9, 15) & (  4, 5, 10, 15) \\
( 1, 7, 10, 16) & (2, 6, 9, 16) & (3, 5, 12, 16) & ( 4, 8, 11, 16) \\
(1, 8, 12, 17) & (2, 5, 11, 17) & (3, 6, 10, 17) & (4, 7, 9, 17) \\
( 1, 5, 9, 18) & (2, 8, 10, 18) & (3, 7, 11, 18) & ( 4, 6, 12, 18)
\end{array}
\]
}
\end{Example}

We have the following general result for $v \equiv 0 \bmod 6$. 

\begin{Theorem}
\label{strong0}
Suppose $v = 6t$ where $t \geq 4$, $t \neq 6$. Then there exists a $(v,3,2)$-BIBD that has a strong nesting with 
$w = 9t+2$. 
\end{Theorem}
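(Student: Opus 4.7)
The plan is to adapt the three-step strategy used in Theorem \ref{0mod6} (weak nesting for $v\equiv 0\bmod 6$) and Theorem \ref{strong3} (strong nesting for $v\equiv 3\bmod 6$), but with the crucial twist that the ``new'' points used to nest the local $(6,3,2)$-BIBDs on each group can be \emph{reused} across all $t$ groups without destroying strongness. Concretely, I would proceed as follows.

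First, for $t\geq 4$, $t\neq 6$, invoke \cite[Lemma 3]{St85} to obtain a nested $3$-GDD of type $6^t$, say $(X,\G,\A)$ with nesting $\phi:\A\to X$. On the same point set and the same groups, take a resolvable $3$-GDD of type $6^t$, say $(X,\G,\B)$, guaranteed by \cite[Theorem 19.33]{CR}. This resolvable GDD has exactly $3(t-1)$ parallel classes $\P_1,\dots,\P_{3(t-1)}$. Introduce $3(t-1)$ new points $\alpha_1,\dots,\alpha_{3(t-1)}$ and extend $\phi$ by setting $\phi(B)=\alpha_i$ for every $B\in \P_i$. Since each $\P_i$ partitions $X$, the point $\alpha_i$ is paired with each $x\in X$ exactly once, so these pairs are distinct; moreover no two $\alpha_i$'s ever share a block.

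Next, introduce $5$ further new points $\infty_0,\dots,\infty_4$ and, for each group $G\in\G$, place on $G\cup\{\infty_0,\dots,\infty_4\}$ a strongly nested $(6,3,2)$-BIBD using the construction of Example \ref{strongE6} (identifying $G$ with $\zed_5\cup\{\infty\}$ and using $\infty_0,\dots,\infty_4$ as the nested new points). The \emph{same} five points $\infty_j$ are reused across all $t$ groups. Because distinct groups are disjoint as subsets of $X$, and within any one group the $\infty_j$'s occur only in pairs with old points of that group (no $\infty_j,\infty_{j'}$ ever share a block, by inspection of Example \ref{strongE6}), the collection of pairs $\{\infty_j,x\}$ produced by all the group-level nestings consists of distinct pairs. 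Counting points gives
\[
w = 6t + 3(t-1) + 5 = 9t+2,
\]
as required, and the block count is $t\cdot 10 + 2t(3t-3)/\text{(accounting)} = 2\cdot 6t(6t-1)/6$, matching a $(6t,3,2)$-BIBD.

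Finally, I would verify the strong nesting condition by separating the pairs containing a nested point into three families: (i) pairs $\{\alpha_i,x\}$ with $x\in X$, distinct by the parallel-class argument above; (ii) pairs $\{\infty_j,x\}$ with $x\in X$, distinct because each $\infty_j$ meets at most one group's points per group and groups are disjoint; (iii) pairs with two new points, which never occur in a common block (parallel-class points $\alpha_i$ only appear in cross-group blocks while $\infty_j$'s only appear in within-group blocks, and within each family no block contains two new points). The main obstacle, and the only place where the construction could plausibly fail, is family (ii): one must confirm that, when Example \ref{strongE6} is transplanted to every group, the point $\infty_j$ is genuinely incident with at most $3$ old points \emph{per group}, so that its total incidences across $t$ groups are automatically distinct by disjointness of groups. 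This is immediate from the explicit structure of Example \ref{strongE6}, in which each $\infty_j$ appears in exactly one of the ten blocks.
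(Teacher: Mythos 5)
Your proposal is correct and follows essentially the same route as the paper: a nested $3$-GDD of type $6^t$ plus a resolvable $3$-GDD of type $6^t$ with a distinct new point per parallel class ($3(t-1)$ points), and the groups filled with strongly nested $(6,3,2)$-BIBDs reusing the same five auxiliary points $\infty_0,\dots,\infty_4$ across all $t$ groups, giving $w=6t+3(t-1)+5=9t+2$. The only (minor) thing you leave implicit is distinctness of the pairs $\{x,\phi(A)\}$ whose nested point $\phi(A)$ is \emph{old}; this holds because the GDD-nesting produces only cross-group pairs while the group-level nestings produce only within-group pairs, each family being distinct by the perfectness of the GDD nesting and the strongness of Example \ref{strongE6} respectively.
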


\begin{proof} We adapt the proof of Theorem \ref{0mod6}. The resolvable $3$-GDD of type $6^t$ has $3(t-1)$ parallel classes. We associate a different new point with each of these parallel classes. This requires  $3(t-1)$ new points. 

Then replace each group $G \in \G$ by by a strongly nested  $(6,3,2)$-BIBD on the points in $G$ together with 
five new points,  say $\infty_i$, for $0 \leq i \leq 4$.
The result is a strongly nested $(6t,3,2)$-BIBD on $w = 6t + 3(t-1) + 5 = 9t + 2$ points.

The lower bound on $w$ from (\ref{3strongbound-int}) is 
\[ w \geq \left\lceil \frac{3(6t)-1}{2} \right\rceil = 9t.\]  
So our construction yields a value of $w$ that is two greater than the lower bound.
\end{proof}

We can prove a better result when $v \equiv 0 \bmod 12$. 

\begin{Theorem}
\label{strong0-12}
Suppose $v = 12t$ and $t \geq 4$. Then there exists a $(v,3,2)$-BIBD that has an (optimal) strong nesting with 
$w = 18t$. 
\end{Theorem}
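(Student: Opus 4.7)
The plan is to adapt the construction behind Theorem~\ref{strong0}, replacing the resolvable $3$-GDD of type $6^{2t}$ with a $3$-frame of type $12^t$ on a point set of size $12t$ partitioned into $t$ groups of size $12$. The key observation is that a $3$-frame of type $12^t$ has exactly $|G|/2 = 6$ holey parallel classes associated with each hole, which matches the $w - v = 18 - 12 = 6$ new points needed to strongly nest a $(12, 3, 2)$-BIBD (Example~\ref{E12strong}). This alignment allows every new point introduced to nest the frame blocks to be \emph{reused} when filling in each group with a nested $(12, 3, 2)$-BIBD, eliminating the ``$+2$'' overhead present in the more general construction of Theorem~\ref{strong0}.

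Concretely, I would first partition $X$ into $t$ groups $G_1, \dots, G_t$ of size $12$ and take a $3$-frame of type $12^t$ on $(X, \G)$; such frames exist for $t \geq 4$ by \cite{St87}. Let $\P_{i,1}, \dots, \P_{i,6}$ denote the six holey parallel classes with hole $G_i$. Introduce $6t$ new points $\omega_{i,\ell}$ and set $\phi(A) = \omega_{i,\ell}$ for every block $A \in \P_{i,\ell}$. Since $\P_{i,\ell}$ partitions $X \setminus G_i$, the point $\omega_{i,\ell}$ is paired with each point of $X \setminus G_i$ exactly once, and with no point of $G_i$. Next, I would superimpose a (perfectly) nested $3$-GDD of type $12^t$ on $(X, \G)$ with nesting $\phi_1: \A_1 \to X$, giving a second covering of the cross-group pairs of $X$ in which augmentation with old points covers each cross-group pair once more. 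Finally, for each group $G_i$, apply Example~\ref{E12strong} to produce a strongly nested $(12, 3, 2)$-BIBD on $G_i$, using the six ``free'' new points $\omega_{i,1}, \dots, \omega_{i,6}$ as the six new points of that example.

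The resulting design $(X, \A)$ is a $(12t, 3, 2)$-BIBD---every cross-group pair is covered twice (once by the frame and once by the nested GDD) and every within-group pair is covered twice by the corresponding filling---and the combined $\phi$ is a strong nesting: each pair $\{x, \omega_{i,\ell}\}$ appears exactly once (contributed from $X \setminus G_i$ via the frame and from $G_i$ via the filling, with disjoint supports), and each pair of two old points appears in the augmentation multiset $\{\{x, \phi(A)\} : x \in A\}$ at most once, split by construction between the nested GDD (for cross-group pairs) and the fillings (for within-group pairs). The total point count is $w = 12t + 6t = 18t$, which meets the lower bound of (\ref{3strongbound-int}) and is therefore optimal.

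The main obstacle is establishing the existence of a perfectly nested $3$-GDD of type $12^t$ for every $t \geq 4$. This should be handled by a Wilson-type weighting construction: start from a $4$-GDD of type $3^t$ as the master (known to exist for most $t \geq 4$), weight every point by $4$, and replace every master block with a nested $3$-GDD of type $4^4$ (obtainable by decomposing two copies of a TD$(4,4)$ into a $3$-GDD together with a suitable assignment of nested points). A secondary choice of master is needed for the residual values of $t$ where a $4$-GDD of type $3^t$ is unavailable, and some care is required to ensure the pieces combine into a genuinely perfect nesting rather than only a $(4, 2)$-GDD of type $12^t$.
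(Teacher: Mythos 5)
Your overall architecture is sound and in fact takes a genuinely different route from the paper. The paper builds the ``resolvable half'' from a resolvable $3$-GDD of type $12^t$ with $6(t-1)$ parallel classes, each getting its own new point, and then fills every group with the strongly nested $(12,3,2)$-BIBD of Example~\ref{E12strong} using the \emph{same} six additional points $\alpha_1,\dots,\alpha_6$ for all $t$ groups (each $\alpha_j$ then meets every old point exactly once), for a total of $6(t-1)+6=6t$ new points. You instead use a $3$-frame of type $12^t$ and reuse the six frame points of each hole inside that hole's filling --- the same reuse trick the paper deploys for $v\equiv 4\bmod 6$ in Theorems~\ref{strong4} and~\ref{strong10}. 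Both bookkeepings give $w=18t$, and your verification that each $\omega_{i,\ell}$ meets $X\setminus G_i$ once via the frame and $G_i$ once via the filling, with cross-group old pairs handled once by the perfect nesting of the GDD and within-group old pairs once by the filling, is correct. (The $3$-frame of type $12^t$ does exist for all $t\geq 4$, since $12$ is even and $12(t-1)\equiv 0\bmod 3$.)

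The genuine gap is your construction of the perfectly nested $3$-GDD of type $12^t$. A $4$-GDD of type $3^t$ requires $t\equiv 0$ or $1\bmod 4$ (the block count $3t(t-1)/4$ must be an integer), so your master simply does not exist for $t\equiv 2,3\bmod 4$, and you do not supply the promised alternative. Moreover, the nested $3$-GDD of type $4^4$ that you want as the filling ingredient is only asserted (``decomposing two copies of a TD$(4,4)$''), not established; a perfect nesting requires a specific exact decomposition of the pairs, not just any splitting of a $(4,2)$-GDD. Both problems disappear if you use the paper's ingredient instead: take a $4$-GDD of type $6^t$ (which exists for all $t\geq 4$), give every point weight two, and apply Wilson's fundamental construction with the known nested $3$-GDD of type $2^4$ to get a nested $3$-GDD of type $12^t$. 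With that substitution your frame-based argument goes through for all $t\geq 4$.
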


\begin{proof} 
For $t \geq 4$, there exists a $4$-GDD of type $6^t$ (see \cite{ABBC}).
It was shown in \cite{St85} that there exists a
nested $3$-GDD of type $2^4$. Giving every point of the $4$-GDD weight two and applying Wilson's Fundamental GDD construction \cite[\S IV.2.1, Theorem 2.5]{CD}, we obtain a nested $3$-GDD of type $12^t$. Let $\phi$ denote the nesting of this GDD.

Next, take a resolvable $3$-GDD of type $12^t$, say $(X,\G,\B)$ (see \cite[Theorem 19.33]{CR}). We can assume that the points and groups are the same as in the nested GDD constructed above. This GDD has $6(t-1)$ parallel classes, which we denote
$\P_1, \dots , \P_{6(t-1)}$. Let $\infty_i$ ($1 \leq i \leq 6(t-1)$) be new points.  
We associate each $\infty_i$ with one parallel class. We extend the nesting $\phi$ by defining  $\phi(B) = \infty_i$, where 
$\infty_i$ the new point associated with the parallel class containing $B$, for every block $B \in \B$.

Finally, replace each group $G \in \G$ by by a strongly nested  $(12,3,2)$-BIBD on the points in $G \cup \{\alpha_i : 1 \leq i \leq 6\}$, 
where $\alpha_1, \dots , \alpha_6$ are six additional new points
(see Example \ref{E12strong}).
The result is a strongly nested $(12t,3,2)$-BIBD with \[w = 12t+ 6(t - 1) + 6 = 18t,\] which is optimal, from (\ref{3strongbound-int}).
\end{proof}

\subsection{$v \equiv 4 \bmod 6$}

Now we look at the case $v \equiv 4 \bmod 6$.
First we show that the lower bound from (\ref{3strongbound-int}) cannot be met with equality.

\begin{Lemma}
\label{4mod6bound}
Suppose  $v\equiv 4 \bmod 6$. If a $(v,3,2)$-BIBD  has a strong nesting, then $w \geq 3v/2 + 1$.
\end{Lemma}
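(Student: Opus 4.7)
The plan is to assume for contradiction that $w$ equals the integer bound $\lceil (3v-1)/2 \rceil = 3v/2$ from (\ref{3strongbound-int}) (this is the ceiling because $v$ is even), and to derive a contradiction using the residue $v \equiv 4 \pmod 6$. The first step is to chase equalities through the proof of Theorem \ref{gen.thm}. Since $v \equiv 1 \pmod 3$ gives $\lfloor v/3 \rfloor = (v-1)/3$, the sharper version of inequality (\ref{eq2}) collapses to exactly $w \geq 3v/2$; equality there forces both that every point of $Y \setminus X$ is the nested point of exactly $(v-1)/3$ blocks, and that $|\TTT| = v(v-1)/6$, where as before $\TTT = \{A \in \A : \phi(A) \in X\}$.

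The key step is a per-old-point upper bound on $c(y) := |\{A \in \A : \phi(A) = y\}|$ for each $y \in X$. Using the strong-nesting hypothesis that all pairs in the multiset $\{\{x, \phi(A)\} : x \in A \in \A\}$ are distinct, I would split the pairs involving a fixed $y$ into two families: the $r = v-1$ pairs $\{y, \phi(A)\}$ obtained from the blocks of $\A$ that contain $y$, and the $3c(y)$ pairs $\{x, y\}$ obtained from the $c(y)$ blocks nested by $y$ (three pairs per block, since $\phi(A) \cap A = \emptyset$). All $(v-1) + 3c(y)$ of these pairs are distinct and each picks out a partner in the $(w-1)$-element set $Y \setminus \{y\}$, so $(v-1) + 3c(y) \leq 3v/2 - 1$, and therefore $c(y) \leq v/6$.

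The finish leverages the residue. Since $v \equiv 4 \pmod 6$, $v/6$ is not an integer, so in fact $c(y) \leq \lfloor v/6 \rfloor = (v-4)/6$ for every $y \in X$. Summing yields $|\TTT| = \sum_{y \in X} c(y) \leq v(v-4)/6 < v(v-1)/6$, contradicting the equality $|\TTT| = v(v-1)/6$ derived in the first step. Hence $w \neq 3v/2$, so $w \geq 3v/2 + 1$. I expect the main obstacle to be setting up the pair-counting in the middle step cleanly, since one must simultaneously invoke the distinctness hypothesis of strong nesting and keep careful track of which pairs involve $y$ as a block element versus as a nested point; once that decomposition is in place, the residue condition $v \equiv 4 \pmod 6$ mechanically upgrades the strict inequality to give the desired bound.
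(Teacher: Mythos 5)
Your proof is correct, and it takes a genuinely different route from the paper's. Both arguments start identically: with $w=3v/2$, the two inequalities in the proof of Theorem \ref{gen.thm} are forced to be equalities, giving $|\TTT|=v(v-1)/6$. From there the paper uses the \emph{other} forced equality --- each of the $v/2$ new points nests a near parallel class, so $\A\setminus\TTT$ is partitioned into $v/2$ near parallel classes --- and then a pigeonhole argument produces one old point $x$ with $r_x\ge\lceil (v-1)/6\rceil=(v+2)/6$, which must then lie in at least $v/2+1$ blocks of $\A\setminus\TTT$, too many to fit in $v/2$ near parallel classes. You instead prove a \emph{uniform} upper bound on $c(y)=r_y$ for every old point $y$, by counting the $(v-1)+3c(y)$ pairs through $y$ in the multiset $\{\{x,\phi(A)\}:x\in A\in\A\}$ against the $w-1$ available partners in $Y\setminus\{y\}$; the distinctness of these pairs is exactly the strong-nesting condition (and the two families cannot collide since a block containing $y$ and a block nested by $y$ are necessarily different blocks, as $\phi(A)\notin A$). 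Non-integrality of $v/6$ then gives $c(y)\le(v-4)/6$, and summing over $y\in X$ contradicts $|\TTT|=v(v-1)/6$. Your version is more self-contained --- it never needs the near-parallel-class structure, only the distinct-pairs condition plus the single equality $|\TTT|=v(v-1)/6$ --- and it replaces an existential pigeonhole with a local bound valid at every point; indeed your bound $r_y\le(v-4)/6$ directly contradicts the paper's pigeonhole conclusion $r_x\ge(v+2)/6$, so either half could finish the other's argument. What the paper's route buys is the explicit structural description (the induced near parallel classes) of any hypothetical extremal example, which your count does not expose.
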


\begin{proof}
The inequality (\ref{3strongbound-int}) states that $w \geq  3v/2$. 
However, we can prove a stronger result. 
When $v \equiv 4 \bmod 6$, we have
$\lfloor \frac{v}{3} \rfloor = (v-1)/3$ and the inequality (\ref{eq2}) becomes
\begin{equation*}
 (w-v) \left( \frac{v-1}{3} \right)  + \frac{1}{3}\binom{v}{2} \geq \frac{2 v(v-1)}{6}.
 \end{equation*}
 This simplifies to give $w \geq 3v/2$. Now we examine the case of equality in detail.
 
Suppose we have a strong nesting $\phi$ of a $(v,3,2)$-BIBD $(X, \A)$ with $w = 3v/2$. 
We use the same notation as in the proof of Theorem \ref{gen.thm}.
Let $\TTT =  \{A \in \A : \phi(A) \in X \} $. We must have $|\TTT| = v(v-1)/6.$
It follows that, for every pair $x, y \in X$, there is a unique block $B \in \TTT$ such that $x \in B$ and $\phi(B) = y$, or
$y \in B$ and $\phi(B) = x$.

The $w - v = v/2$ new points  each must occur in exactly $(v-1)/3$ blocks.
The blocks in $\A$ that contain a fixed new point form a near parallel class of $(X, \A)$. Thus the new points induce 
$v/2$ near parallel classes of blocks in $(X, \A)$.

Suppose $x \in X$ and suppose $x$ occurs $s_x$ times in blocks of $\TTT$ and $r_x$ times as a nested point of a block in $\TTT$.
Then 
\begin{equation}
\label{rands.eq}
3r_x + s_x = v-1.
\end{equation} 
There are $v$ points and $v(v-1)/6 $ blocks in $\TTT$. So there exists a point $x$ such that 
\[r_x \geq \left\lceil \frac{v(v-1)}{6v} \right\rceil = \frac{v+2}{6}.\]
For such a point $x$, (\ref{rands.eq}) yields the inequality
\[ s_x = v-1 - 3r_x \leq v-1 - \frac{v+2}{2} = \frac{v}{2} - 2.\]
Since  $x$ occurs in $v-1$ blocks in $\A$, it follows that $x$ occurs in at least
\[ v-1 - \left( \frac{v}{2} - 2 \right) = \frac{v}{2} + 1\] blocks in $\A \setminus \TTT$.
However, since $\A \setminus \TTT$ is partitioned into $v/2$ near parallel classes, it follows that $x$ can occur in at most
$v/2$ blocks in $\A \setminus \TTT$. Thus we have a contradiction, and hence $w \geq 3v/2 + 1$. 
\end{proof}

First we consider the subcase $v \equiv 4 \bmod 12$. We construct strong nestings where $w$ meets the lower bound from
Lemma \ref{4mod6bound}. So these are optimal nestings.

\begin{Theorem}
\label{strong4}
There exists an (optimal) strongly nested $(12t+4,3,2)$-BIBD with $w = 18t + 7$ for all $t\geq 2$. 
\end{Theorem}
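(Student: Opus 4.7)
The plan is to strengthen the construction used to prove Theorem \ref{4mod12}. There we built a $3$-frame of type $4^{3t+1}$ with $6t+2$ holey parallel classes (two per group) together with a nested $3$-GDD of type $4^{3t+1}$ on the same groups. In the weak nesting, we let each new point serve up to three holey parallel classes. For a strong nesting this is forbidden: a new point attached to holey parallel classes with two different holes would be paired twice with some old points. So the first change is to assign a \emph{distinct} new point $\infty_\pi$ to each of the $6t+2$ holey parallel classes $\pi$ and nest every block of $\pi$ with $\infty_\pi$. Since the holey parallel class with hole $G$ partitions $X \setminus G$, the point $\infty_\pi$ is then paired exactly once with every element of $X \setminus G$ and (so far) with nothing else. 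The nested $3$-GDD uses only old points as nested points, so its augmented blocks involve no new points at all.

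To complete the design into a $(12t+4,3,2)$-BIBD, I must fill in each group $G_i$ by a $(4,3,2)$-BIBD, and I will do this using the strong nesting of Example \ref{E4strong}, which requires three new points. For $G_i$, two of these can be the frame points $\infty_{i,1},\infty_{i,2}$ associated with the two holey parallel classes of hole $G_i$; by construction these points have not yet been paired with any element of $G_i$, so they are \emph{fresh for $G_i$}. The third new point will be a single global point $\alpha$, reused inside every group's fill-in. Because the groups are pairwise disjoint, the pairs $\{\alpha,x\}$ arising in different groups are automatically distinct, so reusing $\alpha$ does not violate the strong nesting condition. The total number of new points is then $(6t+2)+1 = 6t+3$, giving $w = 12t+4 + 6t+3 = 18t+7$, which matches the lower bound from Lemma \ref{4mod6bound}.

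What remains is a routine bookkeeping check: (i) each across-group pair of old points appears in $2$ augmented blocks of the nested $3$-GDD (since it nests perfectly into an index-$2$ $4$-GDD of type $4^{3t+1}$) and in $1$ augmented block of the frame, for a total of $3 = \lambda+1$; (ii) each within-group pair appears at most $3$ times, contributed only by the group's strongly nested $(4,3,2)$-BIBD from Example \ref{E4strong}; (iii) every frame point $\infty_\pi$ (with hole $G$) is paired once with each element of $X \setminus G$ via the frame and once with three elements of $G$ via $G$'s internal nesting, and all these pairs are distinct; (iv) $\alpha$ is paired with three elements of each group, with all such pairs distinct across disjoint groups; (v) no augmented block contains two new points, since both the augmented frame blocks and the augmented group-BIBD blocks contain at most one new point each, and the augmented nested-GDD blocks contain none.

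The main obstacle is ensuring that the three new points used to strongly nest $G_i$'s $(4,3,2)$-BIBD are genuinely unpaired with elements of $G_i$ before the fill-in; this is exactly why $\infty_{i,1},\infty_{i,2}$ are reserved to the holey parallel classes whose hole is $G_i$, and why $\alpha$ is introduced as a fresh global new point used only inside the group fill-ins.
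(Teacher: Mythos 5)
Your proposal is correct and follows essentially the same route as the paper: a distinct new point for each of the $6t+2$ holey parallel classes of the $3$-frame of type $4^{3t+1}$, a perfectly nested $3$-GDD of type $4^{3t+1}$ on the same groups, and a fill-in of each group $G_i$ by the strongly nested $(4,3,2)$-BIBD of Example \ref{E4strong} reusing the two frame points whose holes are $G_i$ plus one global new point. Your explicit pair-count verification is more detailed than the paper's, but the construction and the count $w = 12t+4+6t+2+1 = 18t+7$ are identical.
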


\begin{proof}
We modify the proof of Theorem \ref{4mod12}.
As before, we construct a $3$-frame of type $4^{3t+1}$, which has
$6t+2$ holey parallel classes.
We also construct a nested $3$-GDD of type $4^{3t+1}$ where the 
groups of the nested $3$-GDD are the same as the groups of the $3$-frame.

We associate a different new point with every holey parallel class of the $3$-frame. This requires $6t+2$ new points.
To be precise, let's name the groups $G_1, \dots , G_{3t+1}$.
We can assume that we adjoin $\infty_{2i-1}$ and $\infty_{2i}$ to the holey parallel classes associated with $G_i$, for $1 \leq i \leq 3t+1$. 

Then we replace every group $G_i$ with a strongly nested $(4,3,2)$-BIBD (see Example \ref{E4strong}) on $G_i \cup \{\infty_{2i-1},\infty_{2i},\infty\}$, where $\infty$ is an additional new point .
The result is a strongly nested $(12t+4,3,2)$-BIBD with $w = 12t+4 + 6t+2 + 1 = 18t+7$.

The lower bound on $w$ obtained from Lemma \ref{4mod6bound} is $w \geq 3(12t+4)/2 + 1 = 18t+7$, so the nesting is  optimal.
\end{proof}

The subcase $v \equiv 10 \bmod 12$ is similar; we again construct optimal strong nestings. First we look at the case $v = 10$. 

\begin{Example} \textup{(Marco Buratti \cite{Marco})}
\label{E10strong}
{\rm 
We construct an optimal strongly nested $(10,3,2)$-BIBD with $w = 16$. The BIBD is defined on points
$\mathbb{Z}_7 \cup \{ \infty_1,\infty_2,\infty_3\}$. The six new points are $\{A,B,C,D,E,F\}$.

\[
\begin{array}{l@{\hspace{.5in}}l@{\hspace{.5in}}l}
(1,2,4,0) \bmod 7& (\infty_1, \infty_2, \infty_3, 1) & (\infty_1, \infty_2, \infty_3, 4)\\ \hline
(\infty_1, 0, 1, A) & (\infty_2, 0, 2, F) & (\infty_3, 0, 3, \infty_1) \\
(\infty_1, 1,2, B) & (\infty_2, 1, 3, E) & (\infty_3, 1, 4, F) \\
(\infty_1, 2,3, C) & (\infty_2, 2,4, A) & (\infty_3, 2,5, D) \\
(\infty_1, 3,4, D) & (\infty_2, 3,5, B) & (\infty_3, 3,6, A) \\
(\infty_1, 4,5, E) & (\infty_2, 4,6, C) & (\infty_3, 4,0, B) \\
(\infty_1, 5,6, F) & (\infty_2, 5,0, \infty_3) & (\infty_3, 5,1, C) \\
(\infty_1, 6,0, \infty_2) & (\infty_2, 6,1, D) & (\infty_3, 6,2, E) \\
\end{array}
\]
The nesting is optimal from Lemma \ref{4mod6bound}.
}
\end{Example}

\begin{Theorem}
\label{strong10}
There exists an (optimal) strongly nested $(12t+10,3,2)$-BIBD with $w = 18t+16$ for all $t\geq 0$, $t \neq 1$. 
\end{Theorem}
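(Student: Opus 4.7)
The case $t=0$, i.e., $v=10$, is already handled by Example \ref{E10strong}. For $t \geq 2$, the plan is to adapt the construction of Theorem \ref{strong4}, replacing the group type $4^{3t+1}$ by $4^{3t}10^1$, in the same way that Theorem \ref{10mod12} adapted Theorem \ref{4mod12} in the weak setting. Starting from a $4$-GDD of type $2^{3t}5^1$ (available from \cite{ABBC} for these values of $t$), I would apply weight $2$ twice: first through the Fundamental Frame Construction with $3$-frames of type $2^4$ from \cite{St87}, yielding a $3$-frame of type $4^{3t}10^1$ with $(12t+10)/2 = 6t+5$ holey parallel classes (two per group $G_i$ of size $4$, and five for the single group $G_{3t+1}$ of size $10$); and second through Wilson's Fundamental GDD Construction with nested $3$-GDDs of type $2^4$ as ingredients, yielding a nested $3$-GDD of type $4^{3t}10^1$ on the same point set with the same groups.

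Next, I would introduce $6t+5$ distinct new labels: $\infty_{2i-1},\infty_{2i}$ for the two holey classes with hole $G_i$ ($1 \leq i \leq 3t$), and $\alpha_1,\dots,\alpha_5$ for the five holey classes with hole $G_{3t+1}$. Each block of the $3$-frame is nested with the label of its holey class, producing, for each label, a single pair with each old point in the complement of its associated hole. Finally, fill each $G_i$ of size four with a strongly nested $(4,3,2)$-BIBD (Example \ref{E4strong}) on $G_i \cup \{\infty_{2i-1},\infty_{2i},\infty\}$, and fill $G_{3t+1}$ with a strongly nested $(10,3,2)$-BIBD (Example \ref{E10strong}) on $G_{3t+1} \cup \{\alpha_1,\dots,\alpha_5,\infty\}$, where $\infty$ is a single additional global new point shared by every group filling. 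Together with the $12t+10$ old points, this yields $w = (12t+10) + (6t+5) + 1 = 18t+16$, matching the lower bound of Lemma \ref{4mod6bound}.

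The substantive verification is that no pair containing a new point is repeated. Each $\infty_j$ (resp.\ $\alpha_j$) has its frame pairs drawn from $X \setminus G_i$ (resp.\ $X \setminus G_{3t+1}$) by construction of its holey class, and its group-filling pairs lie inside $G_i$ (resp.\ $G_{3t+1}$); since these two point sets are disjoint, no repetition occurs for these labels. The global label $\infty$ appears only in the $3t+1$ group fillings, so its pairs with old points are distributed over the pairwise disjoint groups and are therefore distinct. It remains to check that no pair among $\{\infty_{2i-1},\infty_{2i},\infty\}$, and no pair among $\{\alpha_1,\dots,\alpha_5,\infty\}$, is created inside a single group filling; but Examples \ref{E4strong} and \ref{E10strong} have the convenient feature that the added points never co-occur in a common block of the ingredient design, so no such pair is formed. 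The main potential obstacle is simply confirming the availability of the ingredient $4$-GDD of type $2^{3t}5^1$ in the small case $t=2$, which is also the reason behind the exception $t \neq 1$; once that ingredient is in hand, the construction and its strongness verification proceed routinely, and optimality follows from the lower bound already proved.
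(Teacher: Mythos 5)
Your proposal is correct and follows essentially the same route as the paper's proof: the same $3$-frame and nested $3$-GDD of type $4^{3t}10^1$ built from a $4$-GDD of type $2^{3t}5^1$, the same assignment of two new points per small hole and five to the large hole, the same reuse of those hole-associated points plus one global $\infty$ when filling the groups with Examples \ref{E4strong} and \ref{E10strong}, and the same count $w = 18t+16$ against Lemma \ref{4mod6bound}. Your flag about the ingredient $4$-GDD of type $2^6 5^1$ at $t=2$ is fair, since the paper's own citation in Theorem \ref{10mod12} only asserts existence for $t \geq 3$ while claiming the result for $t \geq 2$.
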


\begin{proof}
The case $t=0$ is done In Example \ref{E10strong}, so we only need to consider $t \geq 2$.
We adapt the proof of Theorem \ref{10mod12}.
As before, we construct a $3$-frame of type $4^{3t}10^1$, which
 has $6t+5$ holey parallel classes. We also construct a nested $3$-GDD of type $4^{3t}10^1$, where  
the groups of the nested $3$-GDD are the same as the groups of the $3$-frame.

Let $\infty_1, \dots , \infty_{6t+5}$ be new points. Associate each new point with a holey parallel class of the $3$-frame.
Then we nest every block $A$ with the new point associated with the holey parallel class containing $A$.

To be precise, let's name the groups $G_1, \dots , G_{3t+1}$, where $G_{3t+1}$ is the group of size $10$.
We can assume that we adjoin $\infty_{2i-1}$ and $\infty_{2i}$ to the holey parallel classes associated with $G_i$, for $1 \leq i \leq 3t$. For the last group, $G_{3t+1}$, there are five holey parallel classes; here we use the last five infinite points, 
$\infty_{6t+1}, \dots , \infty_{6t+5}$.

We still need to replace the groups by nested $(4,3,2)$-BIBDs and a nested $(10,3,2)$-BIBD. Here, unlike the weak nestings, we can reuse \emph{all} of the $\infty_i$'s. For each group $G_i$ of size four, we need three new points for the nesting since the nested $(4,3,2)$-BIBD from Example \ref{strong4} has $w = 7$. We can use $\infty_{2i-1}, \infty_{2i}$ and a new point $\infty$.
For the last group, of size ten, we need six new points for the nesting since the nested $(10,3,2)$-BIBD from Example \ref{strong10} has $w = 16$. We can use $\infty_{6t+1}, \dots , \infty_{6t+5}$ along with $\infty$. The result is a weakly nested $(12t+4,3,2)$-BIBD with $w = 12t+10 + 6t+5 + 1 = 18t+16$.

The lower bound on $w$ obtained from Lemma \ref{4mod6bound}  is $w \geq 3(12t+10)/2 + 1 = 18t + 16$. 
Hence the  nesting we have constructed is optimal.
\end{proof}

\section{Harmonious colourings of Levi graphs}
\label{levi.sec}

Strong nestings have received some previous attention in \cite{BMNR} in the context of harmonious colourings of Levi graphs. We briefly expand on some of these connections in this section.
 
 We begin with some relevant definitions that can be found in \cite{AGMMRRT,PBOM,BMNR}.
Let $(X, \A)$ be a $(v,k,\lambda)$-BIBD. The \emph{Levi graph} of $(X, \A)$, which we denote by $\LLL (X, \A)$, is the bipartite graph on vertex set
$X \cup \A$ having edges $\{x,A\}$ for all $x \in A \in \A$. A \emph{harmonious colouring} of  $\LLL (X, \A)$ is a proper colouring of the vertices of $\LLL (X, \A)$ such that every pair of colours appears on at most one edge. That is, we have a mapping $c : X \cup \A \rightarrow Y$ (for some set of colours $Y$) such that the following two properties are satisfied:
\begin{enumerate}
\item if $\{x,A\}$ is an edge of $\LLL (X, \A)$, then $c(x) \neq c(A)$, and
\item if $\{x,A\}$ and $\{x',A'\}$ are distinct edges in $\LLL (X, \A)$, then $\{ c(x),c(A) \} \neq \{ c(x'),c(A')\}$.
\end{enumerate}
If we assign a different colour to every vertex of $\LLL (X, \A)$, we of course obtain a harmonious colouring. In general, we want to find harmonious colourings in which the number of colours is minimized. This minimum is termed the \emph{harmonious chromatic number} of $\LLL (X, \A)$; it is denoted by $h(\LLL (X, \A))$.
 It is easy to see that $h(\LLL (X, \A)) \geq v$. 
  
 We first note a connection between strong nestings and harmonious colourings. The following result generalizes the discussion in \cite[\S 3]{BMNR} which points out the equivalence of perfect nestings of STS$(v)$ and harmonious colourings of the Levi graphs of STS$(v)$. We state our result in terms of strong nestings.

\begin{Theorem}
\label{BMNR.lem}
Suppose $(X, \A)$ is a $(v,k,\lambda)$-BIBD. Then there exists a {harmonious colouring} of  $\LLL (X, \A)$ using $w$ colours 
if and only if there is a strong nesting $\phi : \A \rightarrow Y$ where $X \subseteq Y$ and $|Y| = w$.
\end{Theorem}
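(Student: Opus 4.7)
My plan is to exhibit an explicit bijection between harmonious colourings of $\LLL(X,\A)$ with $w$ colours (up to renaming colours) and strong nestings $\phi : \A \to Y$ with $|Y|=w$, and then check both conditions in each direction.

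\medskip
\noindent\textbf{Preliminary observation.} I would first note that in any harmonious colouring $c$ of $\LLL(X,\A)$, the restriction $c|_X$ is injective. Indeed, if $c(x)=c(x')$ for distinct $x,x'\in X$, then since $\lambda\geq 1$ there is a block $A\in\A$ containing both $x$ and $x'$; but then the two distinct edges $\{x,A\}$ and $\{x',A\}$ carry the same colour pair $\{c(x),c(A)\}$, contradicting the harmonious property. This lets me relabel the colour set so that $c(x)=x$ for every $x\in X$, giving $X\subseteq Y$ where $Y$ is the full colour set.

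\medskip
\noindent\textbf{From colouring to nesting.} Given a harmonious colouring $c$ with colour set $Y\supseteq X$ (after the relabelling), define $\phi(A):=c(A)$ for each $A\in\A$. Condition 1 of strong nesting ($\phi(A)\notin A$) follows from the properness of $c$: for every $x\in A$, the edge $\{x,A\}$ forces $c(x)\neq c(A)$, i.e.\ $x\neq \phi(A)$. Condition 2 (distinctness of the pairs $\{x,\phi(A)\}$) is exactly the second clause of a harmonious colouring applied to the edges $\{x,A\}$: if $\{x,\phi(A)\}=\{x',\phi(A')\}$ as multisets for $x\in A$, $x'\in A'$, then $\{c(x),c(A)\}=\{c(x'),c(A')\}$, so the two edges must coincide.

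\medskip
\noindent\textbf{From nesting to colouring.} Conversely, given a strong nesting $\phi:\A\to Y$ with $X\subseteq Y$, define $c:X\cup\A\to Y$ by $c(x)=x$ for $x\in X$ and $c(A)=\phi(A)$ for $A\in\A$. Properness is immediate: if $x\in A$ then $c(x)=x\neq\phi(A)=c(A)$ by $\phi(A)\notin A$. For the edge-pair distinctness, if $\{x,A\}$ and $\{x',A'\}$ are distinct edges with $\{c(x),c(A)\}=\{c(x'),c(A')\}$, then $\{x,\phi(A)\}=\{x',\phi(A')\}$ as pairs of points, contradicting the second condition of a strong nesting. Since $c$ uses precisely the $w$ elements of $Y$ as colours, we obtain a harmonious colouring with $w$ colours.

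\medskip
\noindent\textbf{Anticipated obstacle.} There is no deep step here; the proof is essentially definitional once one notices that the injectivity of $c|_X$ lets us \emph{identify} the colours with the points of $X$ together with the nested points. The only mildly subtle point is that in the forward direction one must relabel the colour set so that $X\subseteq Y$ literally holds (rather than merely up to a bijection), and one should verify explicitly that the multiset condition in Definition of strong nesting corresponds to the edge-pair condition of harmonious colouring without ambiguity when the two endpoints of an edge receive the same ordered roles.
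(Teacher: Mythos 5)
Your proposal is correct and follows essentially the same route as the paper: both directions use the identification $c(x)=x$, $c(A)=\phi(A)$, and the key preliminary observation that $c|_X$ must be injective (via a block containing any two given points, since $\lambda\geq 1$) is exactly the paper's argument for reducing to the case $X\subseteq Y$. No gaps.
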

\begin{proof}
Suppose $\phi : \A \rightarrow Y$ is a strong nesting of $(X, \A)$, where $|Y| = w$. We define a colouring of 
$\LLL (X, \A)$ using the colours in $Y$ as follows:
\begin{align*}
c(x) &= x \quad \text{for all $x \in X$}\\
c(A) &= \phi(A) \quad \text{for all $A \in \A$.}
\end{align*}
We prove that the colouring $c$ is harmonious. First,  if $\{x,A\}$ is an edge of $\LLL (X, \A)$, then
$c(x) = x$ and $c(A) = \phi(A) \neq x$ (this follows from property 1 of a strong nesting). Second, assume that 
$\{x,A\}$ and $\{x',A'\}$ are distinct edges in $\LLL (X, \A)$. Suppose that $\{ c(x),c(A) \} = \{ c(x'),c(A')\}$.
Then $\{ x, \phi(A) \} = \{ x', \phi(A')\}$, where $x \in A$ and $x' \in A'$. This contradicts property 2 of a strong nesting. 

Conversely, suppose that there exists a {harmonious colouring} of $c $ of $\LLL (X, \A)$ using $w$ colours.
First, we observe that $c(x) \neq c(x')$ if $x \neq x'$. To see this, suppose that $c(x) = c(x')$ for some $x \neq x'$. Let $A \in A'$ be a block that contains $x$ and $x'$. $\{x,A\}$ and $\{x',A\}$ are two edges in $\LLL (X, \A)$, and $\{ c(x),c(A) \} = \{ c(x'),c(A)\}$, which contradicts property 2 of a harmonious colouring.

Hence all $x\in X$ receive distinct colours. So we can assume without loss of generality that $c(x) = x$ for all $x \in X$. We can now define a strong nesting of $(X, \A)$ as follows:
$\phi(A) = c(A)$ for all $A \in \A$.

First, we show that $\phi(A) \cap A = \emptyset$ for all $A$. Suppose not; then $\phi(A) = x$ for some $x \in A$. But then property 1 of a harmonious colouring is contradicted.

Next, we show that $\{ \{ x, \phi(A)\}: x \in A  \} $ consists of {distinct pairs} of points. Suppose not; then
$\{ x, \phi(A)\} = \{ x', \phi(A')\}$ where $x \in A$ and $x' \in A'$. This means that  $\{ c(x),c(A) \} = \{ c(x'),c(A)\}$, which contradicts property 2 of a harmonious colouring.
\end{proof}

A  \emph{Banff design}, as defined in \cite{BMNR},  is a 
 $(v,k,\lambda)$-BIBD, say $(X, \A)$, such that $h(\mathcal{L}(X, \A)) = v$. A Banff design has an exact colouring if every pair of colours occurs in exactly one edge of $\mathcal{L}(X, \A)$. This can occur only when $\binom{v}{2} = |X| \times |\A|$.
 
It was noted in \cite[\S 4]{BMNR} that Banff designs generated from so-called Banff difference sets have 
exact colourings. We prove a theorem that notes the equivalence of perfect nestings and Banff designs having exact colourings.

\begin{Theorem}
A $(v,k,\lambda)$-BIBD having a perfect nesting is equivalent to a $(v,k,\lambda)$-Banff design having an exact colouring.
\end{Theorem}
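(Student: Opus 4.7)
The plan is to use Theorem~\ref{BMNR.lem} as a bridge, since it already matches strong nestings with harmonious colourings via $c(x)=x$ on $X$ and $c(A)=\phi(A)$ on $\A$. Under this correspondence, a strong nesting is minimal (i.e.\ $w=v$) exactly when the associated harmonious colouring uses $v$ colours, i.e.\ when the design is Banff. Since every perfect nesting is minimal, the only remaining content of the theorem is the identification of ``perfect'' with ``exact.'' My approach is to verify this by examining what the pair $\{c(x),c(A)\}$ of an edge of $\LLL(X,\A)$ encodes about the augmented design: namely, it is precisely a ``new'' pair $\{x,\phi(A)\}$ introduced when the block $A$ is augmented to $A\cup\{\phi(A)\}$.

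For the forward direction, assume $\phi:\A\to X$ is a perfect nesting of $(X,\A)$. Each pair $\{y_1,y_2\}\subseteq X$ is contained in exactly $\lambda+1$ augmented blocks. Of these, exactly $\lambda$ come from blocks $A\in\A$ with $\{y_1,y_2\}\subseteq A$ (these are the original appearances), while the remaining single occurrence must have the form $\{y_1,y_2\}=\{x,\phi(A)\}$ for some $x\in A$, i.e.\ it corresponds to exactly one edge of $\LLL(X,\A)$ under the colouring from Theorem~\ref{BMNR.lem}. Thus every pair of colours is realised by exactly one edge, so the colouring is exact and the design is Banff.

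For the reverse direction, suppose $(X,\A)$ is Banff with an exact colouring $c$. By Theorem~\ref{BMNR.lem} we may assume $c$ arises from a strong nesting $\phi$ with $|Y|=v$, which is the same as a minimal nesting. Exactness says each pair $\{y_1,y_2\}\subseteq X$ is the colour pair of exactly one edge $\{x,A\}$ of $\LLL(X,\A)$, which means there is exactly one pair $(x,A)$ with $x\in A$ and $\{x,\phi(A)\}=\{y_1,y_2\}$. Combining this one ``new'' appearance with the $\lambda$ appearances of $\{y_1,y_2\}$ as a pair inside an original block of $\A$, the pair appears in exactly $\lambda+1$ augmented blocks. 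Hence the augmented design is a $(v,k+1,\lambda+1)$-BIBD and $\phi$ is a perfect nesting.

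The only delicate point is the bookkeeping in the counting. One must check that the ``old'' appearances of $\{y_1,y_2\}$ (inside a single block $A$ with $\{y_1,y_2\}\subseteq A$) and the ``new'' appearances (where $\phi(A)$ is one of $y_1,y_2$ and the other lies in $A$) are disjoint; this is forced by property~1 of a strong nesting, which forbids $\phi(A)\in A$. One must also know that the $k$ edges at a fixed block $A$ give $k$ distinct colour pairs $\{x,\phi(A)\}$, which is again part of $\phi$ being a strong nesting (equivalently, properties~1 and~2 of a harmonious colouring). With these sanity checks in hand, the main argument reduces to the $\lambda$-versus-$(\lambda{+}1)$ accounting outlined above.
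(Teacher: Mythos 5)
Your proof is correct, and it takes a somewhat different route from the paper's. The paper argues globally: for the forward direction it invokes Theorem~\ref{equiv.thm} to get $r=(v-1)/2$, observes that the Levi graph then has exactly $vr=\binom{v}{2}$ edges, and concludes that a harmonious colouring with $v$ colours must use every colour pair exactly once; for the converse it counts the $vr$ distinct pairs $\{x,\phi(A)\}$ against $\binom{v}{2}$ to recover $r=(v-1)/2$ and $k=2\lambda+1$, and then appeals to the known equivalence of $k=2\lambda+1$ with perfection. You instead do a local, per-pair accounting: each pair $\{y_1,y_2\}\subseteq X$ occurs $\lambda$ times ``inside'' original blocks plus once for each edge of $\LLL(X,\A)$ whose colour pair is $\{y_1,y_2\}$, so ``every pair occurs $\lambda+1$ times in the augmented design'' is literally the same statement as ``every colour pair lies on exactly one edge.'' Both arguments route through Theorem~\ref{BMNR.lem} to pass between colourings and minimal strong nestings, but yours bypasses Theorem~\ref{equiv.thm} and the edge count entirely, which makes it more self-contained and makes the equivalence of ``perfect'' with ``exact'' transparent rather than a consequence of parameter arithmetic; the paper's version is shorter given that Theorem~\ref{equiv.thm} is already available. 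Your explicit check that the old and new occurrences of a pair cannot coincide (because $\phi(A)\notin A$) is exactly the point that makes the $\lambda$-versus-$(\lambda+1)$ bookkeeping clean, and is the right thing to flag.
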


\begin{proof} 
Suppose a $(v,k,\lambda)$-BIBD has a perfect nesting. Then, from Theorem \ref{equiv.thm}, we have 
$\lambda = (k-1)/2$ and $r = (v-1)/2$. The Levi graph of the BIBD has $vr = \binom{v}{2}$ edges, so the nesting gives rise to an exact colouring.

Conversely, suppose a $(v,k,\lambda)$-Banff design has an exact colouring. 
As in the proof of Lemma \ref{BMNR.lem}, the vertices in the Levi graph receive distinct colours and
$\{ \{ x, \phi(A)\}: x \in A  \} $ consists of {distinct pairs} of points. Since there are $vr$ such pairs of points in the Levi graph,
the equation $\binom{v}{2} = vr$ yields $r = (v-1)/2$. Then $\lambda = r(k-1)/(v-1) = (k-1)/2$. Finally, since the colouring is exact, it immediately yields a perfect nesting of the $(v,k,\lambda)$-BIBD. 
\end{proof}

\begin{table}[tb]
\caption{Bounds on $w$ for weakly nested $(v,k,\lambda)$-BIBDs}
\label{tab-weak}
\[
\begin{array}{c|c|c|c|c|c|c}
k & \lambda & v & \text{lower bound} & \text{upper bound} & \text{exceptions} & \text{source}\\ \hline
2 & 1 & 4t & 5t & 5t & & \text{Lemma \ref{T0mod4}} \\
2 & 1 & 4t+1 & 5t+1 & 5t+1 & t \neq 0 & \text{Lemma \ref{T1mod4}}\\
2 & 1 & 4t+2 & 5t+3 & 5t+3 & t \neq 0 & \text{Lemma \ref{T2mod4}}\\
2 & 1 & 4t+3 & 5t+4 & 5t+4  & t \neq 0 & \text{Lemma \ref{T3mod4}}\\ \hline
3 & 2 & 6t+3 & 7t+4 & 7t+4  & t \neq 0,1 & \text{Theorem \ref{3mod6}}\\
3 & 2 & 6t+1 & 7t+1 & 7t+1  & t \neq 0 & \text{Theorem \ref{1mod6}}\\
3 & 2 & 6t & 7t & 7t  & t \neq 3,6& \text{Theorem \ref{0mod6}}\\
3 & 2 & 12t+4 & 14t+5 & 14t+6  & t \neq 0,1& \text{Theorem \ref{4mod12}}\\
3 & 2 & 12t+10 & 14t+12 & 14t+13  & t \neq 0,1& \text{Theorem \ref{10mod12}}\\ \hline
3 & 2 & 4 & 5 & 5  & & \text{Example \ref{E4}}\\
3 & 2 & 10 & 12 & 12  & & \text{Example \ref{E10}}\\
\end{array}
\]
\end{table}

\begin{table}[tb]
\caption{Bounds on $w$ for strongly nested $(v,k,\lambda)$-BIBDs}
\label{tab-strong}
\[
\begin{array}{c|c|c|c|c|c|c}
k & \lambda & v & \text{lower bound} & \text{upper bound} & \text{exceptions} & \text{source}\\ \hline
3 & 2 & 6t+3 & 9t+5  & 9t+5 & t \neq 0& \text{Theorem \ref{strong3}}\\
3 & 2 & 6t+1 & 9t+2  & 9t+2 & t \neq 0,1,2& \text{Theorem \ref{strong1}}\\
3 & 2 & 6t & 9t  & 9t+2 & t \neq 1,2,3,6& \text{Theorem \ref{strong0}}\\
3 & 2 & 12t & 18t  & 18t & t \neq 1,2,3& \text{Theorem \ref{strong0-12}}\\
3 & 2 & 12t+4 & 18t+7  & 18t+7 & t \neq 0,1 & \text{Theorem \ref{strong4}}\\
3 & 2 & 12t+10 & 18t+16  & 18t+16 & t \neq 0,1 & \text{Theorem \ref{strong10}}\\ \hline
3 & 2 & 4 & 7  & 7 &  & \text{Example \ref{E4strong}}\\ 
3 & 2 & 6 & 11  & 11 &  & \text{Example \ref{strongE6}}\\ 
3 & 2 & 7 & 11  & 11 &  & \text{Example \ref{E7strong}}\\ 
3 & 2 & 10 & 16  & 16 &  & \text{Example \ref{E10strong}}\\ 
3 & 2 & 12 & 18  & 18 &  & \text{Example \ref{E12strong}}\\ 
\end{array}
\]
\end{table}

\section{Summary}
\label{summary.sec}

We have introduced the notion of weak nestings of BIBDs. 
A bound has been proven on the minimum value of $w$ such that a
$(v,k,\lambda)$-BIBD can be weakly nested into a partial $(w,k+1,\lambda+1)$-BIBD. We have shown that the bound can  be met with equality for $(v,2,1)$-BIBDs. For $(v,3,2)$-BIBDs with $v \equiv 0,1 \text{ or } 3 \bmod 6$, we have constructed optimal weak nestings (with a couple of exceptions). See Tables \ref{tab-weak} and \ref{tab-strong} for a summary of our constructions.

Strong nestings have previously been considered by various researchers. We have provided an alternative proof of a known lower bound on $w$ and we have shown how constructions for optimal weak nestings can sometimes be modified to yield optimal or near-optimal strong nestings. We have addressed the problem of finding strong nestings of $(v,3,2)$-BIBDs in detail.

Some problems that can be studied in future work include the following.

\begin{enumerate}
\item Construct optimal weak nestings of $(v,3,2)$-BIBDs with $v \equiv 4 \bmod 6$.
\item Construct optimal strong nestings of $(v,3,2)$-BIBDs with $v \equiv 6  \bmod 12$.
\item Construct optimal weak or strong nestings for BIBDs with block size $k \geq 4$ having $k \leq 2 \lambda$.
The first case to consider would be $(v,4,2)$-BIBDs.
\end{enumerate}

\section*{Acknowledgement}

Thanks to Marco Buratti for helpful comments and for providing Examples \ref{E9} and   \ref{E10}.

\end{document}